\documentclass[a4paper,english]{jnsao}
\usepackage[utf8]{inputenc}
\usepackage[T1]{fontenc}
\usepackage{csquotes}
\usepackage{babel}
\usepackage[ruled,vlined]{algorithm2e}
\usepackage{tikz,pgfplots,pgfplotstable}
\usetikzlibrary{shapes.geometric}
\usetikzlibrary{fpu}
\usepgfplotslibrary{colorbrewer,fillbetween}
\usepackage{enumitem}\setlist{nosep}
\usepackage{mathtools}
\usepackage[indLines=false,noEnd=false]{algpseudocodex}
\usepackage[nameinlink,capitalize]{cleveref}
\usepackage{float}
\usepackage[noadjust]{cite}

\newcommand{\term}{\emph}

\newcommand{\field}[1]{\mathbb{#1}}
\newcommand{\N}{\mathbb{N}}

\newcommand{\R}{\field{R}}
\newcommand{\extR}{\overline \R}

\newcommand{\norm}[1]{\|#1\|}

\newcommand{\bignorm}[1]{\bigl\|#1\bigr\|}

\newcommand{\abs}[1]{|#1|}
\newcommand{\adaptabs}[1]{\left|#1\right|}

\newcommand{\grad}{\nabla}
\newcommand{\freevar}{\,\boldsymbol\cdot\,}

\newcommand{\Union}\bigcup
\newcommand{\Isect}\bigcap
\newcommand{\union}\cup
\newcommand{\isect}\cap
\newcommand{\bigunion}\bigcup
\newcommand{\bigisect}\bigcap

\newcommand{\defeq}{:=}

\newcommand{\downto}{\searrow}
\newcommand{\upto}{\nearrow}

\DeclareMathOperator*{\argmin}{arg\,min}

\DeclareMathOperator{\diam}{diam}

\def \uminusSym{\setbox0=\hbox{$\cup$}\rlap{\hbox
        to\wd0{\hss\raise0.5ex\hbox{$\scriptscriptstyle{-}$}\hss}}\box0}

\newcommand{\iprod}[2]{\langle #1,#2\rangle}
\newcommand{\adaptiprod}[2]{\left\langle #1,#2\right\rangle}
\def\llangle{\langle\kern-3pt\langle}
\def\rrangle{\rangle\kern-3pt\rangle}

\def \weaktostarSym{\setbox0=\hbox{$\rightharpoonup$}\rlap{\hbox
        to\wd0{\hss\raise1ex\hbox{$\scriptscriptstyle{*\,}$}\hss}}\box0}

\def\linear{\mathbb{L}}

\def\extR{\overline \R}

\def\this#1{#1^n}
\def\nexxt#1{#1^{n+1}}

\def\thisx{\this{x}}

\def\thisv{\this{v}}

\DeclareMathOperator{\prox}{prox}

\def\d{\,d}

\def\dualprod#1#2{\langle #1|#2\rangle}

\newcommand{\Meas}{\mathscr{M}}

\let\phi=\varphi
\let\epsilon=\varepsilon

\def\ifempty#1{\def\temp{#1}\ifx\temp\empty}
\def\Masses{\Meas(\Omega)}
\def\Plans{\Meas_+(\Omega^2)}

\def\dd{\,\mathrm{d}}

\def\Predual{C_0(\Omega)}

\newcommand{\diffwrt}[2]{#1^{(#2)}}

\def\X{X}

\def\triple{x}
\def\triplealt{\tilde x}

\def\dx{\dd\spatial}
\def\spatial{\xi}
\def\newspatial{\zeta}

\def\obs{\mathbf{A}}

\def\spaceofdomain{\R^d}

\def\julipu{L_{\diffwrt{J}{u}|u}}
\def\julipx{L_{\diffwrt{J}{u}|x}}
\def\jxlipu{L_{\diffwrt{J}{x}|u}}
\def\jxlipx{L_{\diffwrt{J}{x}|x}}
\def\eulipu{L_{\diffwrt{e}{u}|u}}
\def\eulipx{L_{\diffwrt{e}{u}|x}}
\def\jfullulipx{L_{\diffwrt{J}{u} \circ S}}

\renewrobustcmd{\downto}{{{\mathchoice%
            {\rotatebox[origin=c]{-20}{$\to$}}% display
            {\rotatebox[origin=c]{-20}{$\to$}}% text
            {\rotatebox[origin=c]{-20}{\scalebox{0.75}{$\to$}}}% subscript
            {\rotatebox[origin=c]{-20}{\scalebox{0.6}{$\to$}}}% subsubscript
}}}

\renewrobustcmd{\upto}{{{\mathchoice%
            {\rotatebox[origin=c]{20}{$\to$}}% display
            {\rotatebox[origin=c]{20}{$\to$}}% text
            {\rotatebox[origin=c]{20}{\scalebox{0.75}{$\to$}}}% subscript
            {\rotatebox[origin=c]{20}{\scalebox{0.6}{$\to$}}}% subsubscript
}}}

\theoremstyle{definition}
\newtheorem{assumption}[definition]{Assumption}
\counterwithin{assumption}{section}
\crefname{assumption}{Assumption}{Assumptions}
\crefrangeformat{equation}{\upshape{(#3#1#4)--(#5#2#6)}}

\manuscriptstatus{Manuscript}
\manuscriptcopyright{}
\manuscriptlicense{}%

\author{
    Thi Tam Dang\thanks{%
        Department of Mathematics and Statistics, University of Helsinki, Finland;
        \email{tam.dang@iki.fi}, \orcid{0009-0005-1893-5351}
    }
    \and
    Tuomo Valkonen\thanks{%
        MODEMAT Research Center in Mathematical Modeling and Optimization, Quito, Ecuador
        \emph{and}
        Department of Mathematics and Statistics, University of Helsinki, Finland;
        \email{tuomo.valkonen@iki.fi}, \orcid{0000-0001-6683-3572}}
    }
\shortauthor{T.~Valkonen}

\acknowledgements{%
    This research has been supported by the Jane and Aatos Erkko Foundation.
}

\title{Leak localisation with a measure source convection--diffusion model}
\shorttitle{Leak localisation with a measure source}

\date{2026-05-12}

\begin{document}

\maketitle

\begin{abstract}
    We study the inverse problem of locating gas leaks from line-of-sight concentration measurements using a convection–diffusion model with the source term a Radon measure. By imposing sparsity-promoting regularisation on this measure, we recover point sources---identifying both their locations and intensities---rather than diffuse approximations.
    We jointly estimate the underlying physical convection (wind) and diffusion parameters.
    Our main theoretical contribution is the stability analysis of the convection--diffusion equation with respect to its parameters: the measure, and the convection and diffusion fields.
    Numerically, we employ a semi-grid-free optimisation approach for reconstructing the source measure. Our experiments demonstrate accurate localisation, highlighting the potential of the method for practical gas emission detection.
\end{abstract}

\section{Introduction}

Modern industrial facilities require reliable gas leak detection to ensure safety and meet regulatory standards. Accurate quantification of greenhouse gas emissions, particularly methane, is essential for effective climate mitigation~\cite{balcombe2018methane}. Gas emission detection (GED) uses various instruments: remote sensing from aircraft and satellites, point-wise systems such as eddy-covariance towers, and open-path spectrometers. Satellite-based systems are efficient for regional to global emission estimation but often lack accuracy at local scales and in distinguishing different source types (for example, agricultural versus industrial)~\cite{engram2020remote,mcnorton2018attribution}. Eddy-covariance towers provide high-frequency data but only at fixed locations~\cite{miller2013anthropogenic,baldocchi2014measuring}. In contrast, open-path laser dispersion spectroscopy (LDS) helps to bridge this gap by providing path-averaged concentration measurements along multiple beams, which enables rapid two-dimensional emission mapping that is robust under varying atmospheric conditions~\cite{rella2015measuring,eaghestani2014analysis,voss2024multi,alden2019singleblind,hirst2020methane,weidmann2022locating}. Because all these instruments measure gas concentrations rather than emission rates directly, their data must be combined with an inversion model to localize and quantify the underlying emission sources.

Recovering the full gas concentration field from sparse line-of-sight measurements is an ill-posed inverse problem governed by a parabolic convection-diffusion PDE \eqref{eq:parabolic:convection-diffusion}.
In this work, the source term in the PDE will be a measure, to facilitate modelling discrete sources of leaks, both their locations and release rates:
\begin{equation}
    \label{eq:intro:mu}
    \mu = \sum_{i=1}^N \beta_i \delta_{x_i},
\end{equation}
where $\beta_i$ is the release rate of the leak, and $x_i \in \Omega \subset \spaceofdomain$ its location.
We denote by $\delta_x$ the Dirac measure concentrated at $x$.
To reconstruct such a $\mu$, we consider optimisation problems of the form
\begin{equation}
    \label{sec:intro:problem}
    \min_{\mu,k,c} \frac{1}{2}\norm{Au-b}^2 + \alpha \norm{\mu}_{\Masses} + \delta_{\ge 0}(\mu) + R_{k,c}(k, c),
\end{equation}
where the concentration $u: \Omega \times [0, T] \to \R$ arises as a solution of the convection--diffusion equation for the sources $\mu$, convection field $c$, and diffusion field $k$.
The observation operator $A$ performs line integration of the concntration from laser sources to mirrors over multiple time instants: our observations our over time, while the length $T$ of the time interval is---for simplicity---assumed to be short enough that $\mu$ is time-independent.
The Radon-norm regularisation term with parameter $\alpha>0$ encourages sparsity of solutions \cite{bredies2019sparsity}: that an optimal $\mu$ would have the form \eqref{eq:intro:mu}, instead of being diffuse.
The term $\delta_{\ge 0}$ (an indicator function, not to be confused with the Dirac $\delta_x$-measure)  enforces that there are only sources, no sinks. The convection and diffusion fields' measurements aand regularisation are modelled by $R_{k,c}$.

The dependence of $u$ on $\triple=(\mu,k,c)$ is nonlinear. Conventionally, if the dependence were linear, and we would not have the auxiliary parameters $k$ and $c$, this type of measure optimisation problems would be solved by a conditional gradient (Frank--Wolfe) method  \cite{brediespikkarainen2013inverse,denoyelle2019sliding,duval2017sparse,walter2019linear,blank2017extension,bredies2021linear} in a grid-free fashion. Semismooth Newton, particle-based, and semi-infinite approaches have also been developed  \cite{casas2012approximation,casas2013parabolic,chizat2021sparse,chizat2018global,chizat2023convergence,flinth2020linear}.
More recently \cite{tuomov-pointsource,tuomov-unbalanced}, we introduced more conventional forward-backward and primal-dual methods that more easily extend to nonconvex problems and auxiliary parameters.
We will exploit those in \cref{sec:numerical} to solve the problem \eqref{sec:intro:problem}.
The algorithms are described in more detail in \cref{sec:optimisation}.
To be able to apply these algorithms, we will require the PDE solution operator $S_u: (\mu, k, c) \to u$ to be Lipschitz-differentiable in appropriate spaces.
We will in \cref{sec:parabolic} perform the relevant analysis.
This forms the principal theoretical contribution of this manuscript.

In \cref{sec:implementation}, we introduce a finite element approach that couples the convection-diffusion PDE with a computationally simple quadrature‑free observation operator for LDS measurements.
Unlike accurate ray-tracing or X-ray transform discretizations prevalent in computed tomography~\cite{kak1998principles} and optical tomography~\cite{arridge1999optical}, which demand sophisticated mesh intersection algorithms, our approach prioritizes computational simplicity and adjoint consistency over quadrature precision. For strongly regularized inverse problems, this internal consistency between forward and adjoint operators proves more critical than high-fidelity line integration, as discretization artifacts in the adjoint dominate reconstruction errors~\cite{kaipio2006discretization}.

\paragraph{Notation}

For normed spaces \(X\) and \(Y\), we write \(\linear(X;Y)\) for the space of bounded linear operators from \(X\) to \(Y\). The dual space of \(X\) is denoted by \(X^*\). For a functional \(F: X \to \R\), we denote by \(DF(x)\) the Gâteaux derivative of \(F\) at \(x\), and by \(F'(x)\in X^*\) the Fréchet derivative at \(x\), whenever it exists. If \(X\) is a Hilbert space, \(\grad F(x)\in X\) stands for the Riesz representative of \(F'(x)\), i.e., the gradient of \(F\) at \(x\). For partial derivatives, we use the notation $\diffwrt{F}{u}(u, x)$ when differentiating with respect to a specific named argument.

The space $\Masses$ is that of Radon measures on the domain $\Omega \subset \spaceofdomain, 1 \le d \le 3$.

We write $\dualprod{x^*}{x}_{X^*,X}=x^*x$ for the dual pairing, and $\iprod{x}{y}_X$ for the inner product in Hilbert spaces. When the spaces are clear from the context, we do not indicate them.

We generally write $\xi \in \R^d$ for the spatial variable, with $x$ reserved for the overall unknown.

We write $\delta_A$ for the $\{0,\infty\}$-valued indicator function of a set $A$.
We also use $\delta_x$ to denote the Dirac mass at the point $x$.

\section{The convection--diffusion equation with measure sources}
\label{sec:parabolic}

This section formulates the PDE-constrained inverse problem under study (\cref{sec:problem}).
The forward PDE \eqref{eq:parabolic:convection-diffusion} is a parabolic convection-diffusion problem with a measure source $\mu$, modelling the locations and intensities of the unknown leaks. Also the convection and diffusion coefficient are unknown.
We first establish well-posedness of the PDE (\cref{thm:existence}, \cref{sec:existence}), and Lipschitz continuity of its solution operator (\cref{sec:lipschitz}).
To differentiate the solution operator, \cref{sec:adjoint} derives the adjoint equation and corresponding Lipschitz estimates. Finally, \cref{sec:parabolic:dataterm} combines the forward and adjoint equations to obtain Lipschitz estimates for the differential of the data term.
This is necessary for the application of the algorithms of \cref{sec:optimisation}.

\subsection{Problem setting}
\label{sec:problem}

Let $\Omega \subset \spaceofdomain$, ($1 \le d \le 3$), be a bounded domain with convex geometry or with boundary $\Gamma$ of class $C^{1,1}$.
For a given end time $T>0$, write $I \defeq [0,T]$ and $\Omega_T \defeq \Omega \times (0,T)$.
Recalling \eqref{sec:intro:problem}, we consider the PDE-constrained optimisation problem
\begin{equation}
    \label{eq:problem}
    \min_{\mu, k, c}~
    J(S_u(\mu, k, c)) + R(\mu, k, c)
\end{equation}
for the convection--diffusion PDE solution operator $S_u$ (described below) and the specific choices
\begin{equation}
    \label{eq:problem:specific}
    J(u) = \frac{1}{2} \norm{A u - b }_2^2
    \quad\text{and}\quad
    R(\mu, k, c) = \alpha \norm{\mu}_{\Masses} + \delta_{\ge 0}(\mu) +  R_{k,c}(k, c).
\end{equation}
The unknowns are the measure $\mu \in \Masses$, the diffusion coefficient $k \in L^\infty(\Omega_T)$, and the convection coefficient $c \in L^\infty(\Omega_T; \spaceofdomain)$.
Here $k_{\min} > 0$ ensures uniform ellipticity (non-degenerate diffusion), while $k_{\max}, c_{\max} > 0$ enforce physical bounds for numerical stability.
Physically, $k(\xi,t)>0$ controls diffusive spreading, $c(\xi,t)$ represents background advection (wind currents). The measure $\mu$ models the locations and intensities of gas leaks.
The Radon-norm regulariser, with regularisation parameter $\alpha>0$, seeks to impose sparsity of this measure: that it would consists of a finite number of point sources (Dirac masses).
The indicator function $\delta_{\ge 0}$ enforces that we only have sources, no sinks.
The convection and diffusion fields are regularised by the convex function $R_{k,c}: L^\infty(\Omega_T) \times L^\infty(\Omega_T; \spaceofdomain) \to \extR$; we will in \cref{sec:numerical} take it as a simple quadratic regulariser combined with constraints that restrict $k$ and $c$ to scalar subspaces
\begin{align*}
    K & \subset \{ k \in L^\infty(\Omega_T) : k_{\min} \le k(\xi,t) \le k_{\max} \text{ a.e.}  \ \ \xi \in \Omega\}
    \quad\text{and}
    \\
    C & \subset L^\infty(\Omega_T; [-c_{\max}, c_{\max}]^d),
\end{align*}
The measurements  $b \in \R^m$ are obtained with a system of lasers and mirrors, modelled by the observation operator $A \in \linear(L^2(\Omega_T); \R^m)$.
For each $\triple=(\mu, k, c)$, the corresponding gas concentration $u=S_u(\triple) \in L^p(I; W^{1,p}(\Omega)) $ with $p \in [1,\infty)$ is the unique solution of the convection--diffusion equation with initial and boundary values,
\begin{equation}
    \label{eq:parabolic:convection-diffusion}
    \begin{aligned}
        \partial_t u - \grad \cdot (k \grad u)  \, + c \cdot \grad u & = \mu &  & \text{in } \Omega_T,
        \\
        u                                                            & = g   &  & \text{on } \Sigma_1 \defeq \Gamma_1 \times (0,T),
        \\
        \frac{\partial u}{\partial n}                                & = 0   &  & \text{on } \Sigma_2 \defeq \Gamma_2 \times (0,T),
        \\
        u(\cdot,0)                                                   & = u_0 &  & \text{in } \Omega,
    \end{aligned}
\end{equation}
where $u_0 \in L^2(\Omega)$ denotes the given initial condition, $g \in L^2(\Sigma_1)$ the prescribed boundary flux.

\subsection{Existence of weak solutions}
\label{sec:existence}

Let $I = [0,T]$ and $1 < p < \infty$. We denote by $W^{1,p}(\Omega)$ the Sobolev space of functions in $L^p(\Omega)$ whose distributional derivatives belong to $L^p(\Omega)$, and set $W^{-1,p'}(\Omega)$ to be its dual, where $1/p + 1/p' = 1$. These spaces are reflexive and separable. Consequently, the space
\[
    L^2(I; W^{1,p}(\Omega)) \defeq \{ u: [0,T] \to W^{1,p}(\Omega) \text{ measurable} \mid \norm{u}_{L^2(I; W^{1,p}(\Omega))} < \infty \},
\]
equipped with the norm
\[
    \norm{u}_{L^2(I; W^{1,p}(\Omega))} \defeq \left( \int_0^T \norm{u(t)}_{W^{1,p}(\Omega)}^2 \dd t \right)^{1/2},
\]
is a separable and reflexive Banach space, whose dual can be identified with $L^2(I; W^{-1,p'}(\Omega))$.

We also work with $L^2(I; C_0(\Omega))$ equipped with the norm
\[
    \norm{u}_{L^2(I; C_0(\Omega))} \defeq \left( \int_0^T \norm{u(t)}_\infty^2 \dd t \right)^{1/2} < \infty.
\]
The action of $\mu$ on time-dependent functions $v \in L^2(I;C_{0}(\Omega))$ is defined through the canonical duality between
$\Masses$ and $C_{0}(\Omega)$, integrated over the time interval $I$.
More precisely, we define
\[
    \iprod{\mu}{v}_{\Masses,\,L^2(I;C_{0}(\Omega))}
    \defeq \int_{0}^{T} \iprod{\mu}{v(t) }_{\Masses,\,C_{0}(\Omega)}
    \dd t,
\]
where
\[
    \iprod{\mu}{v }_{\Masses,\,C_{0}(\Omega)}
    \defeq \int_{\Omega} v \dd \mu,
    \qquad v \in C_{0}(\Omega).
\]

We may now introduce the concept of a weak solution to \eqref{eq:parabolic:convection-diffusion}:

\begin{definition}[Weak Solution]
    \label{def:weak}
    A function $u \in L^2(I; W^{1,p}(\Omega))$ is a weak solution to problem \eqref{eq:parabolic:convection-diffusion}
    if it satisfies the initial condition $u(0) = u_0$ in $L^2(\Omega)$, and, for all $v \in L^2(I; W^{1,p}(\Omega))$,
    \[
        \int_{\Omega_T} \bigl( -\partial_t u \, v + k \grad u \cdot \grad v + (c \cdot \grad u) \, v \bigr)  \dx \dd t
        = \int_0^T \iprod{\mu}{v(t)}  \dd t + \int_{\Sigma_1} g \,v  \dd \sigma\dd t.
    \]
\end{definition}

\begin{theorem}[Existence and uniqueness]
    \label{thm:existence}
    For every $(\mu, u_0) \in \Masses \times L^2(\Omega)$, $1 < p < d/(d-1)$, the problem
    \eqref{eq:parabolic:convection-diffusion} admits a unique weak solution $u \in L^2(I; W^{1,p}(\Omega))$.
    Moreover, there exist constants $C_p > 0$, independent of $\mu$ and $u_0$, such that
    \[
        \norm{u}_{L^2(I; W^{1,p}(\Omega))} \le C_p \bigl( \norm{\mu}_{\Masses} + \norm{g}_{L^2(\Sigma_1)} + \norm{u_0}_{L^2(\Omega)} \bigr).
    \]
\end{theorem}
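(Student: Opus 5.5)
The plan is the Boccardo--Gallouët duality (transposition) method for parabolic problems with measure data, combined with approximation and a priori estimates. First I would split off the data: by linearity write $u = w + z$, where $z$ solves \eqref{eq:parabolic:convection-diffusion} with $\mu = 0$ and data $(g,u_0)$, and $w$ solves it with source $\mu$ and homogeneous boundary and initial data. Since $g\in L^2(\Sigma_1)$ and $u_0\in L^2(\Omega)$, the problem for $z$ is a standard $L^2$ Galerkin problem. Uniform ellipticity $k\ge k_{\min}$ and $c\in L^\infty$ let the convection term be absorbed by Young's inequality,
\[
    \Bigl|\int_\Omega (c\cdot\grad z)\, z \dx\Bigr| \le \tfrac{k_{\min}}{2}\norm{\grad z}_{L^2}^2 + \tfrac{c_{\max}^2 d}{2k_{\min}}\norm{z}_{L^2}^2 .
\]
Gronwall then gives $z \in L^2(I;H^1(\Omega)) \hookrightarrow L^2(I;W^{1,p}(\Omega))$ (as $p<2$) with norm bounded by $C(\norm{g}_{L^2(\Sigma_1)}+\norm{u_0}_{L^2(\Omega)})$. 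If $g$ is only in $L^2(\Sigma_1)$, one first needs a lifting or a weak boundary formulation; I would use the Robin/natural term in \cref{def:weak} and accept this as standard.

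For $w$, regularise $\mu$ by mollification, $\mu_\epsilon \in L^\infty$ with $\norm{\mu_\epsilon}_{L^1}\le\norm{\mu}_{\Masses}$ and $\mu_\epsilon \weaktostarSym \mu$, and solve for $w_\epsilon$ in the energy space. The key step is a uniform bound in $L^2(I;W^{1,p})$ for $p<d/(d-1)$, obtained by duality. Let $\varphi$ solve the backward adjoint problem
\[
    -\partial_t\varphi - \grad\cdot(k\grad\varphi) - \grad\cdot(c\varphi) = -\grad\cdot F,
\]
with $F\in L^2(I;L^{p'}(\Omega;\R^d))$, $\varphi(T)=0$, and homogeneous adjoint boundary conditions. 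Testing gives
\[
    \int_{\Omega_T}\grad w_\epsilon\cdot F \dx\dd t = \int_0^T\!\!\int_\Omega \varphi\,\mu_\epsilon \dx\dd t \le \norm{\mu}_{\Masses}\,\norm{\varphi}_{L^1(I;C(\overline\Omega))}.
\]
Thus everything reduces to the estimate
\[
    \norm{\varphi}_{L^2(I;C(\overline\Omega))}\le C\norm{F}_{L^2(I;L^{p'})},
\]
where $p'>d$. This is the main obstacle. For bounded measurable $k$ it is De Giorgi--Nash--Moser/Stampacchia regularity for divergence-form parabolic equations: $L^\infty$ bounds follow from right-hand sides $\grad\cdot F$ with $F\in L^{q}(I;L^{r})$, $\frac{2}{q}+\frac{d}{r}<1$. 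With $q=2$ this borderline condition fails, so I expect to lose something. I would first try a time-pointwise elliptic argument: treat $\partial_t$ via an energy estimate and use elliptic Stampacchia $L^\infty$ bounds with $p'>d$ on each slice. If that fails, I would weaken the target to $L^s(I;W^{1,p})$ for some $s<2$, or assume extra regularity of $k$ in time, noting where the $C^{1,1}$/convex boundary enters through Gröger-type $W^{1,p'}$ regularity.

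Given the uniform bound, I would extract a weakly convergent subsequence $w_\epsilon\rightharpoonup w$ in $L^2(I;W^{1,p})$ (a reflexive space). Passing to the limit in the weak formulation uses test functions $v$ smooth enough that $v\in L^2(I;C_0)$, which holds when $p>d$ for the test space or by density. Finally, uniqueness follows from the same duality identity: the difference of two solutions has zero data, so $\int\grad(u_1-u_2)\cdot F=0$ for all $F$. Hence $\grad(u_1-u_2)=0$, and $u_1=u_2$ by the Dirichlet condition on $\Gamma_1$ together with the initial condition. The estimate with $C_p$ follows by summing the bounds for $z$ and $w$ and using weak lower semicontinuity of the norm.
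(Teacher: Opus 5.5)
\textbf{There is a genuine gap: the key duality estimate is never proved.} Your overall architecture matches the paper's sketch, which follows \cite{casas2013parabolic}. You regularise $\mu$, bound the approximations in $L^2(I;W^{1,p})$ by duality against a backward adjoint problem with data $-\grad\cdot F\in L^2(I;W^{-1,p'}(\Omega))$, pass to a weak limit, and get uniqueness from the same identity. Everything hinges on the estimate
$\norm{\varphi}_{L^2(I;C(\overline\Omega))}\le C\norm{F}_{L^2(I;L^{p'})}$.
You leave it unproved, the tools you name cannot deliver it, and the fallbacks change the theorem.
\begin{itemize}
\item De Giorgi--Nash--Moser aims at $\varphi\in L^\infty(\Omega_T)$. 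That is more than you need and, as you note, is false at time integrability $2$; parabolic scaling rules it out.
\item The slice-wise Stampacchia idea fails because the energy estimate only controls $\partial_t\varphi$ in $L^2(I;H^{-1}(\Omega))$. Moving it to the right-hand side of the elliptic problem gives only $\varphi(t)\in H^1(\Omega)$, not a sup-norm bound when $d\ge 2$.
\item Weakening to $L^s(I;W^{1,p})$ with $s<2$ gives the space-time-measure result, not the stated one.
\end{itemize}
Without this estimate your uniqueness argument also breaks. Testing the difference equation with $\varphi$ requires $\grad\varphi\in L^2(I;L^{p'})$, which is exactly what is missing.

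\textbf{The missing idea is maximal parabolic $L^2$-regularity of the adjoint operator on $W^{-1,p'}(\Omega)$.} Suppose $E^*$, with the mixed boundary conditions, is an isomorphism $W^{1,p'}_0(\Omega)\to W^{-1,p'}(\Omega)$ for some $p'>d$ and has maximal regularity there. Then the backward problem with right-hand side $-\grad\cdot F$ has a solution $\varphi\in L^2(I;W^{1,p'}_0(\Omega))\cap H^1(I;W^{-1,p'}(\Omega))$ with norm at most $C\norm{F}_{L^2(I;L^{p'})}$. Moreover $W^{1,p'}_0(\Omega)\hookrightarrow C_0(\Omega)$ precisely because $p'>d$, i.e.\ $p<d/(d-1)$. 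Only square-integrability in time of the spatial sup-norm is needed, so the De Giorgi borderline never enters. This is the ingredient the paper's sketch appeals to via maximal regularity and \cref{rem:sectorial}, in the form used by \cite{casas2013parabolic}.

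Your instinct that more than $k\in L^\infty$ may be needed is not misplaced. For bounded measurable coefficients the isomorphism for some $p'>d$ holds when $d=2$ (Gr\"oger). For $d=3$ it requires extra spatial regularity of $k$, and a time-dependent $k$ needs some temporal regularity for maximal regularity; the paper also passes over these points. But that is where an extra hypothesis belongs, not in weakening the time exponent.
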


The proof follows the strategy of \cite[Theorem~2.2]{casas2013parabolic}: approximate $\mu\in \Masses$ by continuous functions $\mu_k$, obtain uniform $L^2(I;W^{1,p}_0(\Omega))$ bounds via maximal $L^p$-regularity of the sectorial convection-diffusion operator, pass to weak limit in the variational formulation, and prove uniqueness by energy method using test function solving the dual equation. Key adaptations are replacing the Laplacian $\Delta$ by $E v=-\grad\cdot(k\grad v)+c\cdot\grad v$ (sectorial on $W^{1,p}_0(\Omega)$ for mixed boundary conditions), and adding Dirichlet data $g\in L^2(\Sigma_1)$ via trace estimates.

\begin{remark}[Sectorial operators]
    \label{rem:sectorial}
    The operator
    \[
        E v = -\operatorname{div}(k\nabla v) + c\cdot\nabla v
    \]
    is \emph{sectorial} on $W_0^{1,p}(\Omega)$; that is, its spectrum is contained in the sector
    \[
        \Sigma_{\pi/4}
        =
        \{z\in \mathbb{C} : |\arg(z)| \le \pi/4\}\cup\{0\},
    \]
    and its resolvent is bounded outside larger sectors, i.e., \( \| \lambda - E \|^{-1} \le C/ \| \lambda \| \) for all \(\lambda \) outside any larger sector \(\Sigma_{\phi}\) with \(\phi > \pi/4 \). This follows from the uniform ellipticity condition
    \[
        k \ge k_{\min} >0.
    \]
    As a consequence, the operator $\partial_t + E$ satisfies maximal $L^p$-regularity, that is
    \[
        \partial_t v + E\, v = f \in L^p(\Omega_T)
        \quad \Longrightarrow \quad
        v,\ \partial_t v,\  E \, v \in L^p(\Omega_T),
    \]
    which is essential for establishing Lipschitz continuity of $S_u(\cdot)$. For more on sectorial operators, see \cite[Chap.~2]{Pazy1983}.
\end{remark}

\subsection{Lipschitz continuity of the solution mapping}
\label{sec:lipschitz}
For simplicity of notation, we define the space of concentrations (i.e., the state space in the terminology of optimal control), the full space of test functions, and its dual,
\[
    U \defeq L^2(I; W^{1,p}(\Omega)), \quad
    W \defeq L^2(I; W^{1,p}_0(\Omega)) \times L^2(\Sigma_1),
    \quad\text{and}\quad
    W^* \defeq L^2(I; W^{-1,p'}(\Omega)) \times L^2(\Sigma_1).
\]
The full parameter space is the product space
\[
    \X \defeq \Masses \times L^\infty(\Omega_T) \times L^\infty(\Omega_T; \spaceofdomain),
\]
equipped with the norm
\[
    \norm{\triple}_{\X} \defeq \norm{\mu}_{\Masses} + \norm{k}_{L^\infty(\Omega_T)} + \norm{c}_{L^\infty(\Omega_T; \spaceofdomain)}
    \quad\text{for}\quad
    \triple = (\mu, k, c) \in \X.
\]

\begin{assumption}\label{ass:main}
    We assume throughout that:
    \begin{enumerate}[label=(\roman*)]
        \item $\Omega \subset \spaceofdomain, 1 \le d \le 3 $ is a bounded Lipschitz domain, $I=(0,T)$ with $T>0$, and $\Omega_T = \Omega \times I$.
        \item The diffusion coefficient $k : \Omega_T \to \R$ satisfies
              \[
                  0 < k_{\min} \le k(\spatial, t) \le k_{\max} < \infty
                  \quad\text{for a.e. } (\spatial, t)\in\Omega_T,
              \]
              and $k \in L^\infty(\Omega_T)$.
        \item The convection field $c : \Omega_T \to \spaceofdomain$ satisfies
              \[
                  c \in L^\infty(\Omega_T; \spaceofdomain), \qquad  \norm{c}_{L^\infty(\Omega_T; \spaceofdomain)} \le C_c
              \]
              and, when needed for uniqueness and a priori estimates, we assume
              $\operatorname{div} c \in L^\infty(\Omega_T)$.
        \item  The boundary data  and the initial condition satisfy
              \[
                  g \in L^2(\Sigma_1), \qquad u_0 \in L^2(\Omega).
              \]
    \end{enumerate}
\end{assumption}

\begin{subequations}
    \label{eq:weak-formulation-and-solution-operator}
    Under \cref{ass:main}, the operator $e : U \times \X \to W^*$ is defined by
    \begin{equation}
        \label{eq:weak-formulation}
        \begin{split}
            \dualprod{e(u,\mu,k,c)}{v}_{W^*,W}
             & \defeq \int_{\Omega_T} \bigl( -\partial_t u\, v_\Omega + k\nabla u\cdot\nabla v_\Omega
            + (c\cdot\nabla u)\, v_\Omega \bigr)\,\dx \,\dd t
            \\
             & \qquad - \int_0^T \iprod{\mu}{v_\Omega(t)}\,\dd t
            + \int_{\Sigma_1} (\operatorname{trace}_{\Sigma_1}u - g) \, v_{\Gamma_1}\,\dd\sigma\,\dd t
            \\
        \end{split}
    \end{equation}
    for all $v = (v_\Omega, v_{\Gamma_1}) \in W$.
    Furthermore, we introduce the associated solution mapping
    \begin{equation}
        S_u : \X \to U,
        \quad
        \triple = (\mu, k, c) \mapsto u.
    \end{equation}
    Then $u=S_u(\triple)$ solve the weak formulation of \cref{def:weak} if and only if
    \begin{equation}
        \label{eq:solution-operator}
        e(S_u(\triple), \triple) = 0
        \quad \text{in } W^*.
    \end{equation}
\end{subequations}
Next we study the Lipschitz continuity of $S_u$ with respect to the parameters.

\begin{lemma}[Lipschitz continuity of $S_u$]
    \label{lem:Su-lipschitz}
    Let \cref{ass:main} be fulfilled.
    Then, for some $L_S \ge 0$,
    \[
        \norm{S_u(\triple_2)-S_u(\triple_1)}_U \le L_S \norm{\triple_2-\triple_1}_\X
        \quad\text{for all}\quad
        \triple_1,\triple_2\in\X.
    \]
\end{lemma}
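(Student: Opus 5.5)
The plan is to subtract the two state equations and treat the difference as a solution of the same PDE with a modified source. Let $u_i = S_u(\triple_i)$ for $\triple_i=(\mu_i,k_i,c_i)$, $i=1,2$, and set $w \defeq u_2-u_1$. Subtracting the two instances of \eqref{eq:solution-operator} gives, for all test functions $v$, a weak formulation of
\[
    \partial_t w - \grad\cdot(k_2\grad w) + c_2\cdot\grad w
    = (\mu_2-\mu_1) + \grad\cdot\bigl((k_2-k_1)\grad u_1\bigr) - (c_2-c_1)\cdot\grad u_1 .
\]
The boundary and initial data are homogeneous: $w=0$ on $\Sigma_1$, the Neumann condition holds on $\Sigma_2$, and $w(0)=0$. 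The coefficient pair $(k_2,c_2)$ satisfies \cref{ass:main}, so the operator on the left is the sectorial operator $E$ of \cref{rem:sectorial}, with constants depending only on $k_{\min},k_{\max},C_c$ and not on the particular coefficients.

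The right-hand side has three parts, which I will estimate separately by linearity.

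\begin{enumerate}[label=(\alph*)]
    \item The measure part $\mu_2-\mu_1$ is handled directly by \cref{thm:existence}, with $g=0$ and $u_0=0$. This gives a contribution $C_p\norm{\mu_2-\mu_1}_{\Masses}$.
    \item The convection part $f_c\defeq -(c_2-c_1)\cdot\grad u_1$ lies in $L^2(I;L^p(\Omega))$ and satisfies $\norm{f_c}\le \norm{c_2-c_1}_{L^\infty}\norm{u_1}_U$. Since $p<d/(d-1)$, the Sobolev embedding $W^{1,p'}_0(\Omega)\hookrightarrow L^{p'}(\Omega)$ makes $L^p$ continuously embedded in $W^{-1,p}$. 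The parabolic $W^{-1,p}\to W^{1,p}$ estimate for $\partial_t+E$ (the maximal regularity of \cref{rem:sectorial}, applied at the level of $W^{-1,p}$) then gives a contribution $C\norm{c_2-c_1}_{L^\infty}\norm{u_1}_U$.
    \item The diffusion part $\grad\cdot\bigl((k_2-k_1)\grad u_1\bigr)$ is a divergence of an $L^2(I;L^p)$ field, so it lies in $L^2(I;W^{-1,p}(\Omega))$ with norm at most $\norm{k_2-k_1}_{L^\infty}\norm{u_1}_U$. The same $W^{-1,p}$-to-$U$ estimate applies and gives a contribution $C\norm{k_2-k_1}_{L^\infty}\norm{u_1}_U$.
\end{enumerate}

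Uniqueness from \cref{thm:existence} (or the duality argument behind it) identifies $w$ with the sum of the three pieces. Combining this with the a priori bound $\norm{u_1}_U\le C_p(\norm{\mu_1}+\norm{g}+\norm{u_0})$ yields
\[
    \norm{w}_U \le C\bigl(1+\norm{\mu_1}_{\Masses}+\norm{g}_{L^2(\Sigma_1)}+\norm{u_0}_{L^2(\Omega)}\bigr)\norm{\triple_2-\triple_1}_\X .
\]

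The main obstacle is step (c), and to a lesser extent (b). Both require a solution estimate $L^2(I;W^{-1,p})\to L^2(I;W^{1,p})$ for $\partial_t+E$ with bounded measurable $k$, in the non-Hilbert range $p<2$, with constants uniform over the admissible coefficient class. I would obtain it by duality: the adjoint operator $-\partial_t + E^*$ is considered in $L^2(I;W^{1,p'}_0)$ with $p'>d$. Gröger-type $W^{1,q}$ isoperimetric regularity for mixed boundary problems holds for $q$ in a neighbourhood of $2$. For large $p'$ this is the delicate point, and it may force restricting $p$ close to $d/(d-1)$ only when $d\le 2$, or using the same approximation and duality scheme as in \cref{thm:existence}.

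A second subtlety is that the constant depends on $\norm{\mu_1}$. As stated, the lemma claims a single $L_S$ for all $\triple_1,\triple_2\in\X$, but $S_u$ is bilinear in $(k,\mu)$ and so is not globally Lipschitz. I would therefore prove the estimate with $L_S$ uniform over bounded sets of measures. That is the form the algorithms of \cref{sec:optimisation} need, since iterates stay bounded. It also requires $k_i$ and $c_i$ to lie in the admissible class of \cref{ass:main} rather than in all of $\X$.
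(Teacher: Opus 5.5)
Your proposal is correct up to one regularity input that the paper also presupposes, and it takes a genuinely different route from the paper.

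\textbf{The paper's route.} The paper keeps $(k_1,c_1)$ on the left and proceeds in two stages:
\begin{enumerate}
    \item It bounds $\norm{w}_{L^2(\Omega_T)}$ by an Aubin--Nitsche type duality argument. It tests with the backward solution $z$ whose right-hand side is $w$, and uses $\norm{z}_{L^2(I;C_0(\Omega))}\le C\norm{w}_{L^2(\Omega_T)}$.
    \item It upgrades to $U$ by testing the difference equation with $w$ itself, deriving an $L^2(I;H^1(\Omega))$ energy bound, and embedding $H^1\hookrightarrow W^{1,p}$.
\end{enumerate}
The first stage is attractive because it needs only an adjoint problem with $L^2(\Omega_T)$ data, and it yields the $L^2(\Omega_T)$ bound that the observation operator $A$ actually sees. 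However, its $k$-term has $\nabla u_2$ only in $L^2(I;L^p)$ with $p<2$, so it really needs $\nabla z\in L^2(I;L^{p'})$. The second stage needs $\nabla u_2\in L^2(\Omega_T)$ and a well-defined pairing $\iprod{\mu_2-\mu_1}{w(t)}$. Neither is available for $d\ge 2$: a single Dirac source already gives $w\notin L^2(I;H^1(\Omega))$.

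\textbf{Your route.} You apply one a priori estimate, $\Masses + L^2(I;W^{-1,p}(\Omega))\to U$, directly to the difference equation with $(k_2,c_2)$ on the left. This stays in the $W^{1,p}$ scale, $p<d/(d-1)$, where measure-source solutions actually live, and reaches the $U$-norm in one step. Its price is a uniform $L^2(I;W^{-1,p})\to L^2(I;W^{1,p})$ estimate for $\partial_t+E$.

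\textbf{Closing the gap.} By transposition, that estimate follows from the backward adjoint problem mapping $L^2(I;W^{-1,p'}(\Omega))$ into $L^2(I;W^{1,p'}_0(\Omega))$ with $p'>d$. This is exactly the input behind \cref{thm:existence} and behind the paper's bound on $z$. So your fallback to ``the same duality scheme as in \cref{thm:existence}'' does close the step within the paper's framework. Your caveat also stands: with $k$ merely bounded and measurable, Gr\"oger-type results supply this only for $p$ near $2$, hence only for $d\le 2$. That limitation applies equally to the paper's argument.

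\textbf{The constant.} Your remark on $L_S$ is right, although $S_u$ is not bilinear in $(k,\mu)$. The correct reason is that, with $g=0$ and $u_0=0$, $S_u$ is linear in $\mu$ for fixed $(k,c)$, so the $k$-increment scales with $\norm{\mu}_{\Masses}$. Hence only a Lipschitz constant on bounded sets of admissible parameters is possible. The paper implicitly uses the same restriction when it invokes boundedness of the parameter set and takes $L_S = C(1+\sup_{\triple}\norm{S_u(\triple)}_U)$.
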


\begin{proof}Let $\triple_1=(\mu_1,k_1,c_1)$, $\triple_2=(\mu_2,k_2,c_2)$. Abbreviate $u_1 = S_u(\triple_1)$ and $u_2 = S_u(\triple_2)$. We set $w \defeq u_2 - u_1$.
    By definition, $u_i$ solves the weak formulation \eqref{eq:weak-formulation}. Subtracting the two equations we obtain, for all $v\equiv v_\Omega \in W$ (the boundary term cancels out in the differences),
    \begin{equation}
        \label{eq:diff-identity}
        \begin{split}
            \int_{\Omega_T} \bigl(\partial_t w\, v + k_1 \grad w \cdot \grad v + (c_1 \cdot \grad w) \, v \bigr) \dx  \dd t
             & = \int_0^T \iprod{\mu_2 - \mu_1}{v(t)}  \dd t
            \\
            \MoveEqLeft[3]
            + \int_{\Omega_T} \bigl( (k_2 - k_1) \grad u_2 \cdot \grad v
            + ((c_2 - c_1) \cdot \grad u_2) \, v \bigr) \dx  \dd t.
        \end{split}
    \end{equation}
    Now, fix $\triple_1$, and consider the dual problem
    \begin{equation}
        \label{eq:pde-dual}
        -\partial_t z - \grad \cdot (k_1 \grad z) + c_1 \cdot \grad z = w
        \quad\text{in }\Omega_T,\qquad
        z|_{\Sigma_T} = 0,\quad z(T) = 0.
    \end{equation}
    Under the assumptions that $\Omega$ is a bounded Lipschitz domain, $k_1 \ge k_{\min} > 0$ uniformly elliptic, $c_1 \in L^\infty(\Omega; \R^d)$, and $w \in L^2(\Omega_T)$, \eqref{eq:pde-dual} admits a unique weak solution $$z \in L^2(0,T; H^1_0(\Omega)) \cap H^1(0,T; H^{-1}(\Omega)) \cap C([0,T]; L^2(\Omega)).$$
    Maximal parabolic regularity (\cref{thm:existence}) and the compactness of the embedding
    $L^2(0,T;H^1_0(\Omega)) \hookrightarrow L^2(\Omega_T)$ establish a constant $C>0$ (independent of $\triple_1$ in the admissible set) such that
    \[
        \norm{z}_{L^2(I;C_0(\Omega))} \le C \norm{w}_{L^2(\Omega_T)}.
    \]
    Choosing $v = z$ in \eqref{eq:diff-identity} and using the dual equation \eqref{eq:pde-dual}, we get
    \[
        \begin{split}
            \norm{w}_{L^2(\Omega_T)}^2
             & = \int_{\Omega_T} w^2  \dx  \dd t
            = \int_{\Omega_T} \bigl(\partial_t w\, z + k_1 \grad w \cdot \grad z + (c_1 \cdot \grad w) \, z \bigr) \dx  \dd t
            \\
             & = \int_0^T \iprod{\mu_2-\mu_1}{z(t)}  \dd t
            + \int_{\Omega_T} \bigl( (k_2 - k_1) \grad u_2 \cdot \grad z
            + ((c_2 - c_1) \cdot \grad u_2) \, z \bigr) \dx  \dd t.
        \end{split}
    \]
    We now estimate the right-hand side term by term.\\
    For the measure difference term, the duality between $\Masses$ and $C_0(\Omega)$ yields
    \begin{align*}
        \adaptabs{ \int_0^T \iprod{\mu_2-\mu_1}{z(t)}  \dd t }
         & \le \norm{\mu_1-\mu_1}_{\Masses} \, \norm{z}_{L^2(I;C_0(\Omega))}
        \\
         & \le C \norm{\mu_1-\mu_1}_{\Masses} \, \norm{w}_{L^2(\Omega_T)}.
    \end{align*}
    The diffusion coefficient difference term can be estimated by
    \begin{align*}
        \adaptabs{ \int_{\Omega_T} (k_2 - k_1) \grad u_2 \cdot \grad z  \dx  \dd t }
         & \le \norm{k_2 - k_1}_{L^\infty(\Omega_T)}
        \norm{\grad u_2}_{L^2(\Omega_T)} \norm{\grad z}_{L^2(\Omega_T)}
        \\
         & \le C \norm{k_2 - k_1}_{L^\infty(\Omega_T)} \norm{u_2}_{U} \norm{w}_{L^2(\Omega_T)},
    \end{align*}
    where we used $\|\nabla z\|_{L^2(\Omega_T)} \le C\|w\|_{L^2(\Omega_T)}$ from
    standard energy estimates (see Theorem~\ref{thm:existence}) and $\norm{\grad u_2}_{L^2(\Omega_T)} \le C \norm{u_2}_U$.
    Finally, the convection coefficient difference term yields
    \begin{align*}
        \adaptabs{ \int_{\Omega_T} ((c_2 - c_1) \cdot \grad u_2) \, z  \dd x  \dd t }
         & \le \norm{c_2 - c_1}_{L^\infty(\Omega_T; \spaceofdomain)} \norm{\grad u_2}_{L^2(\Omega_T)} \norm{z}_{L^2(\Omega_T)}
        \\
         & \le C \norm{c_2 - c_1}_{L^\infty(\Omega_T; \spaceofdomain)} \norm{u_2}_{U} \norm{w}_{L^2(\Omega_T)}.
    \end{align*}
    Collecting these estimates, we obtain
    \[
        \norm{w}_{L^2(\Omega_T)}^2
        \le C \Bigl( \norm{\mu_1-\mu_1}_{\Masses}
        + \norm{k_2 - k_1}_{L^\infty(\Omega_T)} \norm{u_2}_{U}
        + \norm{c_2 - c_1}_{L^\infty(\Omega_T; \spaceofdomain)} \norm{u_2}_{U} \Bigr) \norm{w}_{L^2(\Omega_T)}.
    \]
    If $w \neq 0$, divide by $\norm{w}_{L^2(\Omega_T)}$ to get
    \[
        \norm{w}_{L^2(\Omega_T)}
        \le C \Bigl( \norm{\mu_1-\mu_1}_{\Masses}
        + (\norm{k_2 - k_1}_{L^\infty(\Omega_T)} + \norm{c_2 - c_1}_{L^\infty(\Omega_T; \spaceofdomain)}) \norm{u_2}_{U} \Bigr).
    \]
    By the uniform a priori estimate from \cref{thm:existence},
    \[
        \norm{u_2}_{U} \le C_p \bigl( \norm{\mu_2}_{\Masses} + \norm{g}_{L^2(\Sigma_1)} + \norm{u_0}_{L^2(\Omega)} \bigr),
    \]
    and since the parameter set is bounded in $\X$, we have $\norm{u_2}_U \le C$ uniformly in $\triple_2$. Therefore,
    \[
        \norm{w}_{L^2(\Omega_T)} \le C_p \norm{\triple_2 - \triple_1}_{\X}.
    \]
    Testing the difference equation with \(v = w\) yields
    \[
        \frac{\dd}{\dd t} \norm{w(t)}_{L^2(\Omega)}^2 + k_{\min} \norm{\grad w(t)}_{L^2(\Omega)}^2
        \le C_p \norm{\triple_2 - \triple_1}_{\X}^2,
    \]
    which implies
    \[
        \norm{w}_{L^2(I;H^1(\Omega))} \le C_p \norm{\triple_2 - \triple_1}_{\X}.
    \]
    Using the embeddings $H^1(\Omega) \hookrightarrow W^{1,p}_0(\Omega)$ for the range of $p \le 2 d /(d-2)$ in \cref{thm:existence}, and the definition of $U=L^2(I;W^{1,p}_0(\Omega))$, we finally obtain for some $L_S>0$ that
    \[
        \norm{w}_{U} \le L_S \norm{\triple_2 - \triple_1}_{\X},
    \]
    where $L_S = C \big(1 + \sup_{x\in \X} \|S_u(x)\|_U\big)$,
    $C = C\big(T, k_{\min}, \|c\|_{L^\infty(\Omega_T;\spaceofdomain)}\big)$.
\end{proof}

\begin{lemma}[Fr\'echet differentiability of the weak form]
    \label{lem:Fret-diff}
    Let \cref{ass:main} hold. The operator $e$ defined in \eqref{eq:weak-formulation} is (partially) Fr\'echet differentiable at every point $(u,\mu,k,c) \in U \times \X$ with respect to each argument.
    The partial derivatives are bounded linear operators
    \[
        \diffwrt{e}{u} \in \linear(U; W^*),
        \quad
        \diffwrt{e}{\mu} \in \linear(\Masses; W^*),
        \quad
        \diffwrt{e}{k} \in \linear(L^\infty(\Omega_T); W^*),
        \quad\text{and}\quad
        \diffwrt{e}{c} \in \linear(L^\infty(\Omega_T;\mathbb{R}^d); W^*),
    \]
    defined for $v = (v_\Omega, v_{\Gamma_1}) \in W$ by
    \begin{subequations}
        \begin{align}
            \label{eq:diff-u}
            \dualprod{\diffwrt{e}{u} (u,\mu,k,c) \tilde{u}}{v}_{W^*,W}
             & = \int_{\Omega_T} \bigl( -\partial_t \tilde{u}\, v_\Omega + k \grad \tilde{u} \cdot \grad v_\Omega + (c \cdot \grad \tilde{u}) \, v_\Omega   \bigr)  \dd x  \dd t + \int_{\Sigma_1} g \, v_{\Gamma_1} \dd \sigma \dd t,
            \\
            \label{eq:diff-mu}
            \dualprod{\diffwrt{e}{\mu} (u,\mu,k,c) \eta}{v}_{W^*,W}
             & = -\int_0^T \iprod{\eta}{v_\Omega(t)} \dd t,
            \\
            \label{eq:diff-k}
            \dualprod{\diffwrt{e}{k} (u,\mu,k,c) h}{v}_{W^*,W}
             & = \int_{\Omega_T} h\, \grad u \cdot \grad v_\Omega \dx\dd t,
            \\
            \label{eq:diff-c}
            \dualprod{\diffwrt{e}{c} (u,\mu,k,c) d}{v}_{W^*,W}
             & = \int_{\Omega_T} (d \cdot \grad u) \, v_\Omega \dx\dd t,
        \end{align}
    \end{subequations}
    for all $v \in W$, $ \tilde{u} \in U$, $\eta \in \Masses$, $h \in L^\infty(\Omega_T)$, $d \in L^\infty(\Omega_T; \spaceofdomain)$.

\end{lemma}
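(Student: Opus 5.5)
The plan is to exploit the structure of $e$. It is affine in $\mu$ and in $u$ separately. The only nonlinearity is bilinear, through the products $k\grad u$ and $c\cdot\grad u$. So each partial Fréchet derivative can be read off from an exact expansion of $e$ along a perturbation of one argument, and the remainder is either zero or quadratic in the perturbation. I will write $\dualprod{e(u,\mu,k,c)}{v}$ as a sum of four pieces: the part linear in $u$ with coefficients $(k,c)$, the part $-\int_0^T\iprod{\mu}{v_\Omega(t)}\dd t$, the boundary part $\int_{\Sigma_1}(\operatorname{trace}_{\Sigma_1}u-g)v_{\Gamma_1}$, and nothing else. I then treat each argument in turn.

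\textbf{Derivatives in $\mu$, $k$, $c$.} For $\mu$, the difference $e(u,\mu+\eta,k,c)-e(u,\mu,k,c)$ equals exactly the functional in \eqref{eq:diff-mu}, so the remainder vanishes. Boundedness follows from
\[
\Bigl|\int_0^T\iprod{\eta}{v_\Omega(t)}\dd t\Bigr|\le \norm{\eta}_{\Masses}\norm{v_\Omega}_{L^1(I;C_0(\Omega))}.
\]
This requires $W^{1,p}_0(\Omega)\hookrightarrow C_0(\Omega)$. That embedding is the delicate point, because for $p<d/(d-1)$ it fails when $d\ge2$. I would handle it as in \cref{lem:Su-lipschitz}: interpret the pairing on the subspace of test functions in $L^2(I;C_0(\Omega))$, equivalently take $W$ with $p>d$ for the test side. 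I would flag this as the point where the function-space setting needs care.

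For $k$, the map is linear in $k$, so $e(u,\mu,k+h,c)-e(u,\mu,k,c)$ is exactly \eqref{eq:diff-k}. Hölder's inequality gives the bound $\norm{h}_{L^\infty}\norm{\grad u}_{L^2(I;L^p)}\norm{\grad v_\Omega}_{L^2(I;L^{p'})}$. The $c$-derivative \eqref{eq:diff-c} is handled in the same way, using $\norm{v_\Omega}_{L^{p'}}$ controlled through a Sobolev embedding. In all three cases the remainder is identically zero, so Fréchet differentiability is immediate once boundedness of the linear maps is shown.

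\textbf{Derivative in $u$.} Since $e$ is affine in $u$, I get $e(u+\tilde u,\cdot)-e(u,\cdot)$ equal to the linear part applied to $\tilde u$, plus $\int_{\Sigma_1}\operatorname{trace}\tilde u\,v_{\Gamma_1}$. The remainder is again zero. I would therefore record $\diffwrt{e}{u}$ as the homogeneous linear operator, with the boundary contribution involving the trace of $\tilde u$. The $g$ term displayed in \eqref{eq:diff-u} does not belong to the derivative, since it is constant in $u$; I would note this.

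\textbf{Main obstacle.} Boundedness of $\diffwrt{e}{u}$ is the hard step. It requires two things: that $\partial_t\tilde u$ pairs with $v_\Omega$ for $\tilde u\in U$, and that the trace maps $U$ into $L^2(\Sigma_1)$. Neither follows from $U=L^2(I;W^{1,p})$ alone. I would try first to move the time derivative onto $v$ by integration by parts, or to work in $U\cap W^{1,2}(I;W^{-1,p})$. The trace bound holds for $p$ near $1$ only into $L^2(I;L^{q}(\Gamma))$ with $q<2$, so here I would restrict to $H^1$-regular states, as in the final step of \cref{lem:Su-lipschitz}.

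Finally, joint Fréchet differentiability, should it be needed, follows because the cross terms $h\grad\tilde u$ and $d\cdot\grad\tilde u$ are bounded by $\norm{(h,d)}\,\norm{\tilde u}$, which is $o$ of the total perturbation norm.
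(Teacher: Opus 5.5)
Your route is the paper's route. Since $e$ is affine in $u$ and $\mu$ and linear in each of $k$ and $c$, every partial derivative comes from an exact expansion with zero remainder, and boundedness is checked term by term with Hölder and the measure--$C_0$ duality. Where you deviate from the statement, you are right and the paper is not:
\begin{itemize}
\item The paper's own expansion produces $\int_{\Sigma_1}\tilde u|_{\Sigma_1}v_{\Gamma_1}\,\mathrm d\sigma\,\mathrm dt$, so the $g$ in \eqref{eq:diff-u} is a misprint.
\item Its $\mu$-bound invokes a ``compact embedding $W\hookrightarrow L^2(I;C_0(\Omega))$'', which fails for $p<d/(d-1)$ when $d\ge2$.
\item Its $u$-bound leaves a term $\norm{\partial_t\tilde u}_{L^2(I;W^{-1,p'}(\Omega))}$ and then silently absorbs it into $\norm{\tilde u}_U$.
\end{itemize}
So the points you flag are exactly where the lemma, as stated, cannot be proved. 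They are not gaps peculiar to your argument.

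\textbf{Your ``main obstacle'' is a counterexample.} Do not try to close it in the stated spaces. Take $\phi,\psi\in C_c^\infty(\Omega)$ with $\int_\Omega\phi\psi\,\mathrm d\xi\ne0$, and set $\tilde u_n=\phi(\xi)\sin(nt)$ and $v_n=(\psi(\xi)\cos(nt),0)$. Then $\norm{\tilde u_n}_U$ and $\norm{v_n}_W$ stay bounded, and so do the $k$-, $c$- and trace terms. However,
\[
\int_{\Omega_T}\partial_t\tilde u_n\,v_n\,\mathrm d\xi\,\mathrm dt
= n\int_\Omega\phi\psi\,\mathrm d\xi\int_0^T\cos^2(nt)\,\mathrm dt
\]
is unbounded in $n$. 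Hence $\diffwrt{e}{u}\notin\linear(U;W^*)$ for any test space of the form $L^2(I;V)$. Of your two repairs, only the second works as it stands: put $\partial_t\tilde u$ into the state norm, i.e.\ use your $U\cap W^{1,2}(I;W^{-1,p})$. Integrating by parts onto $v$ would additionally require test functions with time regularity and $v(T)=0$, that is, a transposition formulation.

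\textbf{Change the test space globally.} Move to test functions in $L^2(I;W^{1,p'}_0(\Omega))$ with $p'>d$, which is exactly what $p<d/(d-1)$ buys, and do it everywhere, not only for $\mu$. Your $k$-bound already uses $\norm{\grad v_\Omega}_{L^2(I;L^{p'})}$. For $d\ge2$ we have $p<2<p'$, so the $W^{1,p}_0$-norm cannot control that quantity.

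\textbf{The trace term.} The trace of $W^{1,p}(\Omega)$ lands in $L^2(\Gamma)$ precisely when $p\ge 2d/(d+1)$. In $d=2$ you can therefore keep $U$ and take $p\in[4/3,2)$. In $d=3$ the admissible range $p<3/2$ misses this threshold, so your restriction to $H^1$-states, or a weaker boundary component in $W$, is genuinely needed.
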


\begin{proof}We verify Fréchet differentiability with respect to each argument separately, using the
    definition of \(\dualprod{e(u,\mu,k,c)}{v}\) from \eqref{eq:weak-formulation}.

    \textbf{Proof of \eqref{eq:diff-u}.}
    Let \(u_\varepsilon = u + \varepsilon \tilde{u} \) with \(\tilde{u} \in U\). Then for $v = (v_\Omega, v_{\Gamma_1}) \in W$,
    \begin{align*}
        \dualprod{e(u_\varepsilon,\mu,k,c)}{v}
         &
        = \int_{\Omega_T} \bigl( -\partial_t (u + \varepsilon \tilde{u}) v_\Omega
        + k \grad(u + \varepsilon \tilde{u}) \cdot \grad v_\Omega
        + (c \cdot \grad (u + \varepsilon \tilde{u}))\, v_\Omega \bigr) \dx \dd t
        \\
        \MoveEqLeft[-1]
        - \int_0^T \iprod{\mu}{v_\Omega(t)} \dd t
        + \int_{\Sigma_1}  \bigl( (u + \epsilon \tilde{u}) -g \bigr)  v_{\Gamma_1} \dd \sigma \dd t
        \\
        \MoveEqLeft[2]
        = \dualprod{e(u,\mu,k,c)}{v}
        + \varepsilon \Biggl[ \int_{\Omega_T} \bigl( -\partial_t \tilde{u} \, v_\Omega
        + k \grad \tilde{u} \cdot \grad v_\Omega
        + (c \cdot \grad \tilde{u}) \, v_\Omega \bigr) \dx \dd t + \int_{\Sigma_1} \tilde{u}|_{\Sigma_1} \, v_{\Gamma_1} \dd \sigma \dd t \Biggr].
    \end{align*}
    Dividing by \(\varepsilon\) and letting \(\varepsilon \to 0\) yields \eqref{eq:diff-u}. For continuity, we use the standard characterization of the norm in \(W^* \):
    \[
        \norm{\diffwrt{e}{u}(u,\mu,k,c) \tilde{u}}_{W^*}
        = \sup_{v \in W, \norm{v}_{W} = 1}
        \dualprod{\diffwrt{e}{u}(u,\mu,k,c) \tilde{u}}{v}.
    \]
    For any \(\tilde{u} \in U\) and \(v \in W\), we estimate
    \begin{align*}
        \adaptabs{ \int_{\Omega_T} -\partial_t \tilde{u} \, v_\Omega \dx \dd t}
         &
        \le \norm{\partial_t \tilde{u}}_{L^2(I;W^{-1,p'}(\Omega))}
        \norm{v_\Omega}_{L^2(I;W^{1,p'}_0(\Omega))},
        \\
        \adaptabs{ \int_{\Omega_T} k \grad \tilde{u} \cdot \grad v_\Omega \dx \dd t }
         &
        \le \norm{k}_{L^\infty(\Omega_T)} \norm{\grad \tilde{u}}_{L^p(\Omega_T)}
        \norm{\grad v_\Omega}_{L^{p'}(\Omega_T)},
        \\
        \adaptabs{ \int_{\Omega_T} (c \cdot \grad \tilde{u}) \, v_\Omega \dx \dd t }
         &
        \le \norm{c}_{L^\infty(\Omega_T; \spaceofdomain)} \norm{\grad \tilde{u}}_{L^p(\Omega_T)}
        \norm{v_\Omega}_{L^{p'}(\Omega_T)},
        \quad\text{and}
        \\
        \adaptabs{ \int_{\Sigma_1} \tilde{u}|_{\Sigma_1} v_{\Gamma_1} \, \dd \sigma \dd t }
         & \le
        \norm{\tilde{u}|_{\Sigma_1}}_{L^2(\Sigma_1)} \norm{v_{\Gamma_1}}_{L^2(\Sigma_1)}.
    \end{align*}
    By trace theorem and Poincar\'e inequality,
    \[
        \norm{\tilde{u}|_{\Sigma_1}}_{L^2(\Sigma_1)} + \norm{v_\Omega}_{L^{p'}(\Omega_T)}
        \le C_P \bigl( \norm{\tilde{u}}_U + \norm{v_\Omega}_W \bigr).
    \]
    Again, by the Poincar\'e inequality \cite[Corollary 9.19]{brezis2011functional} on \(W^{1,p'}_0(\Omega)\), we have
    $
        \norm{v}_{L^{p'}(\Omega_T)}
        \le C_{P} \norm{\grad v}_{L^{p'}(\Omega_T)},
    $
    so all three terms are bounded by
    \[
        C \bigl( \norm{\partial_t \tilde{u}}_{L^2(I;W^{-1,p'}(\Omega))}
        + (\norm{k}_{L^\infty(\Omega_T)} + \norm{c}_{L^\infty(\Omega_T; \spaceofdomain)})
        \norm{\grad \tilde{u}}_{L^{p'}(\Omega_T)} \bigr) \norm{v}_{W},
    \]
    for some \(C>0\) depending only on \(\Omega,\Gamma_2,p,T\). Taking the supremum over \(\norm{v}_{W} = 1\)
    gives
    \[
        \norm{\diffwrt{e}{u}(u,\mu,k,c)\tilde{u}}_{W^*}
        \le C \bigl(1 + \norm{k}_{L^\infty(\Omega_T)} + \norm{c}_{L^\infty(\Omega_T; \spaceofdomain)}\bigr) \norm{\tilde{u}}_{U},
    \]
    and, therefore, \(\diffwrt{e}{u} \in \linear(U; W^*)\).

    \begin{remark}
        Since the Fr\'echet derivatives of $e$ with respect to \( \mu, k, c \) act only on the volume component $v_\Omega$ and are independent of the boundary test function $v_{\Gamma_1}$, we may write $v := v_\Omega$ for brevity in subsequent calculations.
    \end{remark}

    \textbf{Proof of \eqref{eq:diff-mu}.}
    Let \(\mu_\varepsilon\) be a variation of \(\mu\) in the space \(\Masses\), and denote \(\eta \in \Masses\) so that the directional increment is \(\mu_\varepsilon = \mu + \varepsilon \eta\). Since \(\mu\) is time‑independent, we have
    \[
        \iprod{\mu_\varepsilon}{v(t)}
        = \iprod{\mu}{v(t)} + \varepsilon \iprod{\eta}{v(t)}
        \quad\text{for all} \quad t \in I.
    \]
    Then
    \[
        \begin{split}
            \dualprod{e(u,\mu_\varepsilon,k,c)}{v}
             &
            = \int_{\Omega_T} \bigl( -\partial_t u \, v
            + k \grad u \cdot \grad v
            + (c \cdot \grad u) \, v \bigr) \dd x \dd t
            - \int_0^T \iprod{\mu_\varepsilon}{v(t)} \dd t
            - \int_{\Sigma_1} g \, v \dd \sigma\dd t
            \\
             &
            = \dualprod{e(u,\mu,k,c)}{v}
            - \varepsilon \int_0^T \iprod{\eta}{v(t)} \dd t.
        \end{split}
    \]
    The expression is linear in \(\varepsilon\), so
    \[
        \iprod{\diffwrt{e}{\mu}(u,\mu,k,c)\,  \eta}{v}
        = -\int_0^T \iprod{\eta}{v(t)} \dd t.
    \]
    Boundedness follows from the fact that
    \[
        \abs{\dualprod{\diffwrt{e}{\mu}(u,\mu,k,c) \, \eta }{v}}
        \le \norm{\eta}_{\mathscr{M}(\Omega)} \norm{v}_{L^2(I;\,C_0(\Omega))},
    \]
    using the compact embedding \(W \hookrightarrow L^2(I;\, C_0(\Omega))\), so
    \(\diffwrt{e}{\mu} \in \linear(\Masses; W^*)\).

    \textbf{Proof of \eqref{eq:diff-k}.}
    Let \(k_\varepsilon = k + \varepsilon h\) with \(h \in L^\infty(\Omega_T)\). Then
    \[
        \dualprod{e(u,\mu,k_\varepsilon,c)}{v}
        = \dualprod{e(u,\mu,k,c)}{v}
        + \varepsilon \int_{\Omega_T} h \grad u \cdot \grad v \dx \dd t.
    \]
    Thus
    \[
        \dualprod{\diffwrt{e}{k}(u,\mu,k,c) \, h }{v}
        = \int_{\Omega_T} h \grad u \cdot \grad v \dx \dd t.
    \]
    Boundedness follows from
    \[
        \bigl| \dualprod{\diffwrt{e}{k}(u,\mu,k,c) \, h }{v} \bigr|
        \le \norm{h}_{L^\infty(\Omega_T)}
        \norm{\grad u}_{L^p(\Omega_T)} \norm{\grad v}_{L^{p'}(\Omega_T)},
    \]
    so \(\diffwrt{e}{k} \in \linear(L^\infty(\Omega_T); W^*)\).

    \textbf{Proof of \eqref{eq:diff-c}.}
    Let \(c_\varepsilon = c + \varepsilon d\) with \(d \in L^\infty(\Omega_T; \spaceofdomain)\). Then
    \begin{align*}
        \dualprod{e(u,\mu,k,c_\varepsilon)}{v}
         & = \dualprod{e(u,\mu,k,c)}{v}
        + \varepsilon \int_{\Omega_T} (d \cdot \grad u) \,v \dd x \dd t.
    \end{align*}
    Thus
    \[
        \dualprod{\diffwrt{e}{c}(u,\mu,k,c)\, d}{v}
        = \int_{\Omega_T} (d \cdot \grad u) \, v \dd x \dd t,
    \]
    and boundedness follows from
    \[
        \bigl| \dualprod{\diffwrt{e}{c}(u,\mu,k,c) \, d}{v} \bigr|
        \le \norm{d}_{L^\infty(\Omega_T; \spaceofdomain)}
        \norm{\grad u}_{L^p(\Omega_T)} \norm{v}_{L^{p'}(\Omega_T)},
    \]
    so \(\diffwrt{e}{c} \in \linear(L^\infty(\Omega_T; \spaceofdomain); W^*)\).

    The boundary term \(\int_{\Sigma_1} g \, v_{\Gamma_1} \dd \sigma\dd t\) is independent of \((\mu,k,c)\) and
    thus does not contribute to any partial derivative.
\end{proof}

The following lemma provides Lipschitz continuity estimates for the partial derivatives of the PDE operator $e$ in weak form.

\begin{lemma}\label{lem:eu-lipschitz}
    Let \cref{ass:main} be satisfied and \(e\) be defined by \eqref{eq:weak-formulation}.
    Then, the partial derivative \(\diffwrt{e}{u}(u, \triple) : U \to W^*\) is uniformly Lipschitz with respect to both \(u \in U\) and \(\triple \in \X\), i.e., there exist constants \(\eulipu ,\eulipx  \ge 0\) such that
    \begin{equation}
        \label{eq:lip-diffe}
        \norm{\diffwrt{e}{u}(u_2,\triple_2) - \diffwrt{e}{u}(u_1,\triple_1)}_{\linear(U; W^*)}
        \le \eulipu \,\norm{u_2 - u_1}_{U}
        + \eulipx \,\norm{\triple_2 - \triple_1}_{\X}
    \end{equation}
    for all \(u_1,u_2 \in U\) and \(\triple_1,\triple_2 \in \X\).
\end{lemma}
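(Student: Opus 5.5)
The plan is to read off the structure of $\diffwrt{e}{u}$ from \cref{lem:Fret-diff}. By \eqref{eq:diff-u}, $\diffwrt{e}{u}(u,\triple)$ does not depend on $u$ at all, nor on $\mu$. Indeed $e$ is affine in $u$, so its partial derivative in $u$ is the linear part of $e$ in $u$, and that part involves only $k$ and $c$. (The boundary term $\int_{\Sigma_1} g\,v_{\Gamma_1}$ printed in \eqref{eq:diff-u} is constant in $u$. The proof of \cref{lem:Fret-diff} shows that the correct term is the trace term $\int_{\Sigma_1}\tilde u|_{\Sigma_1} v_{\Gamma_1}$, and I will use that form.) Either way, this term does not depend on $(u,\triple)$. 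Hence I can take $\eulipu = 0$.

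For fixed $\tilde u\in U$ and $v\in W$, I subtract the two expressions. The terms $-\partial_t\tilde u\,v_\Omega$ and the boundary term cancel, leaving
\[
    \dualprod{\bigl(\diffwrt{e}{u}(u_2,\triple_2) - \diffwrt{e}{u}(u_1,\triple_1)\bigr)\tilde u}{v}
    = \int_{\Omega_T} \bigl( (k_2-k_1)\grad\tilde u\cdot\grad v_\Omega + ((c_2-c_1)\cdot\grad\tilde u)\,v_\Omega \bigr)\dx\dd t.
\]

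Next I bound this difference exactly as in the boundedness part of the proof of \cref{lem:Fret-diff}. The first term is handled by Hölder's inequality with exponents $\infty, p, p'$:
\[
    \abs{\,\cdot\,} \le \norm{k_2-k_1}_{L^\infty(\Omega_T)}\norm{\grad\tilde u}_{L^p(\Omega_T)}\norm{\grad v_\Omega}_{L^{p'}(\Omega_T)}.
\]
The second term is handled the same way:
\[
    \abs{\,\cdot\,} \le \norm{c_2-c_1}_{L^\infty(\Omega_T;\spaceofdomain)}\norm{\grad\tilde u}_{L^p(\Omega_T)}\norm{v_\Omega}_{L^{p'}(\Omega_T)}.
\]
The Poincaré inequality on the zero-trace component then gives $\norm{v_\Omega}_{L^{p'}}\le C_P\norm{\grad v_\Omega}_{L^{p'}}$. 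I also need $\norm{\grad\tilde u}_{L^p(\Omega_T)}\le C\norm{\tilde u}_U$, which follows from $L^2(I)\hookrightarrow L^p(I)$ on the bounded interval since $p<2$. Finally, the $v$-factor must be controlled by $\norm{v}_W$.

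Taking the supremum over $\norm{v}_W\le1$ and $\norm{\tilde u}_U\le 1$ yields \eqref{eq:lip-diffe} with $\eulipu=0$ and $\eulipx = C(1+C_P)$. The difference $\norm{\mu_2-\mu_1}$ does not enter, and it is trivially dominated by $\norm{\triple_2-\triple_1}_\X$.

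The step I expect to be the main obstacle is the duality pairing of the gradient terms. It requires the test space to control $\grad v_\Omega$ in $L^{p'}$, with $p'>2$ the conjugate exponent, whereas $W$ is built on $W^{1,p}_0$. The proof of \cref{lem:Fret-diff} implicitly uses $W^{1,p'}_0$ test functions, and I will adopt the same convention: either read the test component of $W$ as $L^2(I;W^{1,p'}_0(\Omega))$, or use the embedding $L^{p'}(I)\hookrightarrow L^2(I)$ in time. With that convention fixed, the estimates reduce to exactly the bounds already proved in \cref{lem:Fret-diff}, and the constants are uniform in $(u_i,\triple_i)$.
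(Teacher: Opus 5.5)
Your argument is essentially the paper's. You note that $\diffwrt{e}{u}$ does not depend on $u$ or $\mu$, the $\partial_t\tilde u$ and boundary terms cancel in the difference, and the two remaining terms are bounded by Hölder (with $k_2-k_1$, $c_2-c_1$ in $L^\infty$) plus Poincaré. This gives $\eulipu=0$ and $\eulipx=C$. The one difference is the exponents. The paper pairs $L^2$ with $L^2$ and asserts $\norm{\grad\tilde u}_{L^2(\Omega_T)}\le C\norm{\tilde u}_U$, which $U=L^2(I;W^{1,p}(\Omega))$ does not give when $p<2$. Your $p$/$p'$ pairing, together with the test-space mismatch you point out, is the more consistent version of the same estimate.

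One step needs repair. Suppose you take the test component in $L^2(I;W^{1,p'}_0(\Omega))$. Then you cannot bound $\norm{\grad v_\Omega}_{L^{p'}(\Omega_T)}$ by $\norm{v_\Omega}_{L^2(I;W^{1,p'}_0(\Omega))}$, because $p'>2$ and $L^2(I)\not\hookrightarrow L^{p'}(I)$. The embedding $L^{p'}(I)\hookrightarrow L^2(I)$ you mention goes the wrong way for this. Do not pass through space--time Lebesgue norms. Instead apply Hölder in space for a.e.\ $t$ and Cauchy--Schwarz in time:
\[
    \Bigl|\int_{\Omega_T}(k_2-k_1)\grad\tilde u\cdot\grad v_\Omega\dx\dd t\Bigr|
    \le \norm{k_2-k_1}_{L^\infty(\Omega_T)}\int_0^T\norm{\grad\tilde u(t)}_{L^p(\Omega)}\norm{\grad v_\Omega(t)}_{L^{p'}(\Omega)}\dd t
    \le \norm{k_2-k_1}_{L^\infty(\Omega_T)}\norm{\tilde u}_U\norm{v_\Omega}_{L^2(I;W^{1,p'}_0(\Omega))}.
\]
The $c$-term is handled the same way, after applying Poincaré to $v_\Omega(t)$ for each $t$. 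This also makes your $L^2(I)\hookrightarrow L^p(I)$ step, and hence the restriction $p<2$, unnecessary. Alternatively, taking the test component in $L^{p'}(I;W^{1,p'}_0(\Omega))$ makes your space--time Hölder argument work as written.
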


\begin{proof}Recall $\diffwrt{e}{u}(u,\triple): U \to W^*$ is defined by
    \[
        \dualprod{\diffwrt{e}{u}(u,\triple) \tilde{u}}{v}_{W^*,W}
        = \int_{\Omega_T} \bigl( -\partial_t \tilde{u}\, v_\Omega
        + k \grad \tilde{u} \cdot \grad v_\Omega + (c \cdot \grad \tilde{u}) \, v_\Omega \bigr) \dx\dd t
        + \int_{\Sigma_1} \tilde{u}|_{\Sigma_1}\, v_{\Gamma_1} \dd \sigma \dd t,
    \]
    for $\tilde{u} \in U$, $v = (v_\Omega, v_{\Gamma_1}) \in W$, where \(\triple = (\mu,k,c)\) and \(\X\) is the parameter space for \(\triple\).
    The operator norm in \(\linear(U; W^*)\) is
    \[
        \norm{\diffwrt{e}{u}(u, \triple)}_{\linear(U; W^*)}
        = \sup_{\tilde{u} \in U, \norm{\tilde{u}}_U= 1}
        \norm{\diffwrt{e}{u}(u, \triple) \, \tilde{u}}_{W^*}.
    \]
    Given pairs \((u_1,\triple_1)\) and \((u_2,\triple_2)\) in \(U \times \X\) and letting $v\defeq v_\Omega$ for brevity, the difference of derivatives satisfies
    \begin{align*}
        \iprod{\bigl(\diffwrt{e}{u}(u_2,\triple_2)
         & - \diffwrt{e}{u}(u_1,\triple_1)\bigr)\,\tilde{u}}{v}
        \\
         & = \int_{\Omega_T} \bigl(
        (k_2 - k_1) \grad \tilde{u} \cdot \grad v
        + (c_2 - c_1) \cdot \grad \tilde{u}\, v
        \bigr) \dx \dd t,
    \end{align*}
    since the term \(-\partial_t w\) is independent of \(u\) and $\triple$. Estimating each term:
    \begin{align*}
        \adaptabs{\int_{\Omega_T} (k_2 - k_1) \grad \tilde{u} \cdot \grad v \dx \dd t }
         & \le \norm{k_2 - k_1}_{L^\infty(\Omega_T)}
        \norm{\grad \tilde{u}}_{L^2(\Omega_T)}
        \norm{\grad v}_{L^2(\Omega_T)},
        \\
        \adaptabs{\int_{\Omega_T} (c_2 - c_1) \cdot \grad  \tilde{u}\, v \dx \dd t }
         & \le \norm{c_2 - c_1}_{L^\infty(\Omega_T; \spaceofdomain)}
        \norm{\grad \tilde{u}}_{L^2(\Omega_T)}
        \norm{v}_{L^2(\Omega_T)}.
    \end{align*}
    By the standard embedding \(W \hookrightarrow L^2(\Omega_T)\) and the Poincaré inequality (for \(W^{1,2}_0(\Omega)\) in the spatial case), we have
    \[
        \norm{\grad \tilde{u}}_{L^2(\Omega_T)}
        \le C\,\norm{\tilde{u}}_{U}
        \quad \text{and} \quad
        \norm{v}_{L^2(\Omega_T)}
        \le C\,\norm{v}_{U}.
    \]
    Thus, for \(k_1,k_2 \in L^\infty(\Omega_T)\) and \(c_1,c_2 \in L^\infty(\Omega_T; \spaceofdomain)\), there exists a constant \(C_E>0\) such that
    \[
        \bigl|\iprod{\bigl(\diffwrt{e}{u}(u_2,\triple_2) - \diffwrt{e}{u}(u_1,\triple_1)\bigr)\,\tilde{u}}{v}\bigr|
        \le
        C_E \bigl(\norm{k_2 - k_1}_{L^\infty(\Omega_T)} + \norm{c_2 - c_1}_{L^\infty(\Omega_T; \spaceofdomain)}\bigr)
        \norm{\tilde{u}}_U\norm{v}_U.
    \]
    Taking the supremum over \(\norm{\tilde{u}}_U= \norm{v}_U = 1\),
    \[
        \norm{\diffwrt{e}{u}(u_2,\triple_2) - \diffwrt{e}{u}(u_1,\triple_1)}_{\linear(U; W^*)}
        \le C_E \bigl(\norm{k_2 - k_1}_{L^\infty(\Omega_T)} + \norm{c_2 - c_1}_{L^\infty(\Omega_T; \spaceofdomain)}\bigr).
    \]
    Since the right‑hand side depends only on the difference in \(\triple =  (\mu, k,c)\), and \(\norm{\triple_2 - \triple_1}_{\X}\) includes the norms \(\norm{k_2 - k_1}_{L^\infty}\) and \(\norm{c_2 - c_1}_{L^\infty}\), and since \(\diffwrt{e}{u}(u, \triple)\) is actually independent of \(u\), the term involving \(\norm{u_2 - u_1}_W\) is absent. Hence \eqref{eq:lip-diffe} holds with \(\eulipu  = 0\) and \(\eulipx  = C_E\).
\end{proof}

\begin{lemma}[Lipschitz continuity of $\diffwrt{e}{\triple}(S_u(\triple), \triple)$]
    \label{lem:ep-lipschitz}
    Suppose \cref{ass:main} holds. Let
    \begin{equation*}
        \diffwrt{e}{\triple}(S_u(\triple), \triple): \X \to W^*, \quad
        \diffwrt{e}{\triple}(S_u(\triple), \triple) h := \left[ \triple \mapsto \diffwrt{e}{\triple}(S_u(\triple), \triple) \right]'(\triple) h,
    \end{equation*}
    denote the derivative (at $\triple$) of the map $\triple \mapsto \diffwrt{e}{\triple}(S_u(\triple), \triple)$.
    This is the total derivative combining the linearisation of the forward map
    $S_u'(\triple): \X \to U$ and the partial derivative of the misfit $\diffwrt{e}{\triple}(S_u(\triple), \triple): U \to W^*$.
    Then it is Lipschitz continuous, i.e.,
    \[
        \left\Vert \diffwrt{e}{\triple}(S_u(\triple_2),\triple_2) - \diffwrt{e}{\triple}(S_u(\triple_1),\triple_1) \right\Vert_{\linear(\X; W^*)}
        \le L_e \norm{\triple_2-\triple_1}_{\X}.
    \]
    for all $\triple_1,\triple_2\in\X$, where $L_e\ge0$ depends only on bounds from \cref{ass:main}.
    Consequently,
    \[
        M_e \defeq \sup_{\triple \in \X} \left\Vert \diffwrt{e}{\triple}(S_u(\triple), \triple) \right\Vert_{\linear(\X; W^*)} < \infty,
    \]
    and $\triple \mapsto \diffwrt{e}{\triple}(S_u(\triple), \triple)$ is Lipschitz continuous from $\X$ to $\linear(\X; W^*)$.
\end{lemma}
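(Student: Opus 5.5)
The plan is to read $\diffwrt{e}{\triple}(u,\triple)$ as the partial derivative of $e$ with respect to the parameter block, $\diffwrt{e}{\triple}(u,\triple)(\eta,h,d) = \diffwrt{e}{\mu}(u,\triple)\eta + \diffwrt{e}{k}(u,\triple)h + \diffwrt{e}{c}(u,\triple)d$. Its components are given explicitly in \cref{lem:Fret-diff}. We then evaluate it along the state curve $u=S_u(\triple)$ and combine the structure of these formulas with \cref{lem:Su-lipschitz}.

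The first observation is that \eqref{eq:diff-mu} does not depend on $(u,\triple)$ at all. The $\mu$-component therefore cancels in the difference $\diffwrt{e}{\triple}(S_u(\triple_2),\triple_2)-\diffwrt{e}{\triple}(S_u(\triple_1),\triple_1)$. Likewise, \eqref{eq:diff-k} and \eqref{eq:diff-c} are independent of $(\mu,k,c)$ and linear in $u$. Writing $u_i=S_u(\triple_i)$ and $w=u_2-u_1$, the difference acting on $(\eta,h,d)\in\X$ and tested with $v\in W$ reduces to
\[
    \int_{\Omega_T} \bigl( h\,\grad w\cdot\grad v_\Omega + (d\cdot\grad w)\,v_\Omega \bigr)\dx\dd t .
\]
We bound the first term by Hölder as $\norm{h}_{L^\infty}\norm{\grad w}_{L^p(\Omega_T)}\norm{\grad v_\Omega}_{L^{p'}(\Omega_T)}$, and the second by $\norm{d}_{L^\infty}\norm{\grad w}_{L^p(\Omega_T)}\norm{v_\Omega}_{L^{p'}(\Omega_T)}$. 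These are exactly the estimates used for boundedness in \cref{lem:Fret-diff}. We then use $\norm{\grad w}_{L^p(\Omega_T)}\le C\norm{w}_U$, obtained from Hölder in time on the bounded interval $I$, together with the Poincaré/embedding bounds on the test space, to get
\[
    \bigl|\dualprod{(\cdots)(\eta,h,d)}{v}\bigr| \le C\,\norm{w}_U\,\norm{(\eta,h,d)}_\X\,\norm{v}_W .
\]
Taking suprema over unit $v$ and unit $(\eta,h,d)$, and invoking \cref{lem:Su-lipschitz} in the form $\norm{w}_U\le L_S\norm{\triple_2-\triple_1}_\X$, gives the Lipschitz bound with $L_e=C L_S$.

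For the bound on $M_e$, we estimate $\norm{\diffwrt{e}{\triple}(S_u(\triple),\triple)}$ directly. The same Hölder bounds give $\norm{\eta}_{\Masses}$ times the embedding constant of $W\hookrightarrow L^2(I;C_0(\Omega))$ used in the proof of \eqref{eq:diff-mu}, plus $C\norm{S_u(\triple)}_U(\norm{h}_{L^\infty}+\norm{d}_{L^\infty})$. \Cref{thm:existence} then bounds $\norm{S_u(\triple)}_U$ by $C_p(\norm{\mu}_\Masses+\norm{g}+\norm{u_0})$. This is uniform on the admissible, bounded parameter set, as already assumed at the end of the proof of \cref{lem:Su-lipschitz}. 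The Lipschitz estimate alone would also give this, by the triangle inequality from one fixed reference point.

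The main obstacle is the exponent bookkeeping. The state space uses $W^{1,p}$ with $p<d/(d-1)$, while the test space must pair with $W^{1,p'}$ and also embed into $C_0$ for the measure term. We need to check that the $L^p$–$L^{p'}$ Hölder pairing is consistent with how $W$ and its norm are defined. If it is not, the fallback is to work in the $L^2(I;H^1)$ norm for $w$, which is controlled by the last part of the proof of \cref{lem:Su-lipschitz}, and to pair with $L^2$ norms of $v$. A secondary point is that the supremum over all of $\X$ is only finite on a bounded admissible subset, and this restriction should be stated explicitly.
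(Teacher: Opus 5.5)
Your argument is correct, and it takes a different route from the paper's proof. The paper does not use the explicit formulas of \cref{lem:Fret-diff} here. Instead it starts from the implicit identity $\diffwrt{e}{\triple}(S_u(\triple),\triple)=-\diffwrt{e}{u}(S_u(\triple),\triple)S_u'(\triple)$ and splits the difference into two pieces:
\[
\diffwrt{e}{u}(S_u(\triple_2),\triple_2)[S_u'(\triple_2)-S_u'(\triple_1)]
\quad\text{and}\quad
[\diffwrt{e}{u}(S_u(\triple_2),\triple_2)-\diffwrt{e}{u}(S_u(\triple_1),\triple_1)]S_u'(\triple_1).
\]
It bounds the second piece by \cref{lem:eu-lipschitz} and $\norm{S_u'(\triple)}\le L_S$, obtaining $L_e\le L_S\eulipx$.

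The first piece is dropped on the grounds that $S_u'$ is ``linear''. But $S_u$ is not affine in $(k,c)$: $S_u'(\triple)(0,h,d)$ solves a linearised equation whose operator depends on $(k,c)$ and whose source is $\grad\cdot(h\grad S_u(\triple))-d\cdot\grad S_u(\triple)$. So $S_u'$ genuinely varies with $\triple$. That route therefore needs a Lipschitz bound on $S_u'$, which is never proved. Since $S_u'(\triple)=-\diffwrt{e}{u}(S_u(\triple),\triple)^{-1}\diffwrt{e}{\triple}(S_u(\triple),\triple)$, such a bound would most naturally be deduced from the very estimate being proved. The route also presupposes differentiability of $S_u$, which the paper only obtains later via \cref{prop:adjoint-inverse} and the implicit function theorem.

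Your direct computation avoids all of this. Since $\diffwrt{e}{\mu}$ is constant and $\diffwrt{e}{k}$, $\diffwrt{e}{c}$ are independent of $\triple$ and linear in $u$, you need only the Lipschitz continuity of $S_u$ from \cref{lem:Su-lipschitz}, and you get an explicit $L_e=CL_S$. The abstract route would buy independence from the concrete structure of $e$, but only once the missing estimate on $S_u'$ is supplied.

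The caveats you raise are real, but they are inherited from the paper rather than introduced by your argument. \cref{lem:Fret-diff} uses the same $L^p$--$L^{p'}$ pairing, which only makes sense with the test component in $W^{1,p'}_0(\Omega)$; the choice $p<d/(d-1)$ is exactly what makes that space embed into $C_0(\Omega)$. Likewise, $M_e<\infty$ only on a bounded admissible set, which the paper tacitly assumes.

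One refinement: do not apply H\"older on $\Omega_T$ and then H\"older in time, since this fails on the test side when $p'>2$. Instead pair in space first and then use Cauchy--Schwarz in time:
\[
\int_0^T\norm{\grad w(t)}_{L^p(\Omega)}\norm{\grad v_\Omega(t)}_{L^{p'}(\Omega)}\dd t\le\norm{w}_{L^2(I;W^{1,p}(\Omega))}\norm{v_\Omega}_{L^2(I;W^{1,p'}(\Omega))}.
\]
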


\begin{proof}By the chain rule for Fréchet derivatives, we have
    \[
        [\triple \mapsto e(S_u(\triple), \triple)]'(\triple) = \diffwrt{e}{\triple}(S_u(\triple), \triple) + \diffwrt{e}{u}(S_u(\triple), \triple)\circ S_u'(\triple).
    \]
    Since $e(S_u(\triple), \triple)\equiv 0$, the total derivative vanishes, i.e.,
    \[
        0 = \diffwrt{e}{\triple}(S_u(\triple), \triple) + \diffwrt{e}{u}(S_u(\triple), \triple) S_u'(\triple).
    \]
    Rearranging gives
    \[
        \diffwrt{e}{\triple}(S_u(\triple), \triple) = -\diffwrt{e}{u}(S_u(\triple), \triple) S_u'(\triple).
    \]
    Let $\triple_1,\triple_2\in\X$. Then
    \begin{align*}
         & \diffwrt{e}{\triple}(S_u(\triple_2),\triple_2) - \diffwrt{e}{\triple}(S_u(\triple_1),\triple_1)
        \\
         & = -\diffwrt{e}{u}(S_u(\triple_2),\triple_2)S_u'(\triple_2) + \diffwrt{e}{u}(S_u(\triple_1),\triple_1)S_u'(\triple_1)
        \\
         & = -\bigl[\diffwrt{e}{u}(S_u(\triple_2),\triple_2)S_u'(\triple_2) - \diffwrt{e}{u}(S_u(\triple_2),\triple_2)S_u'(\triple_1)\bigr]
        \\
        \MoveEqLeft[-1]
        - \bigl[\diffwrt{e}{u}(S_u(\triple_2),\triple_2) - \diffwrt{e}{u}(S_u(\triple_1),\triple_1)\bigr]S_u'(\triple_1).
    \end{align*}
    We have
    \[
        \begin{split}
            \bignorm{\diffwrt{e}{\triple}(S_u(\triple_2),\triple_2) - \diffwrt{e}{\triple}(S_u(\triple_1),\triple_1)}_{\linear(\X; W^*)}
             &
            \le
            \norm{\diffwrt{e}{u}(S_u(\triple_2),\triple_2)}_{\linear(U; W^*)}\norm{S_u'(\triple_2)-S_u'(\triple_1)}_{\linear(\X; U)}
            \\
            \MoveEqLeft[2]
            + \norm{S_u'(\triple_1)}_{\linear(\X; U)}\bignorm{\diffwrt{e}{u}(S_u(\triple_2),\triple_2)-\diffwrt{e}{u}(S_u(\triple_1),\triple_1)}_{\linear(U; W^*)}.
        \end{split}
    \]
    By \cref{lem:Su-lipschitz}, we have
    \[
        \norm{S_u(\triple_2) - S_u(\triple_1)}_U \le L_S \norm{\triple_2 - \triple_1}_{\X},
    \]
    and by \cref{lem:eu-lipschitz},
    \[
        \norm{ \diffwrt{e}{u}(S_u(\triple_2), \triple_2) - \diffwrt{e}{u}(S_u(\triple_1), \triple_1)}_{\linear(U; W^*)}
        \le \eulipx  \norm{\triple_2 - \triple_1}_{\X}.
    \]
    These estimates, together with the uniform bounds
    \[
        \norm{S_u'(\triple)}_{\linear(\X; U)} \le L_S,
        \quad\text{and}\quad
        \norm{\diffwrt{e}{u}(S_u(\triple), \triple)}_{\linear(U; W^*)} \le M_u,
    \]
    imply that
    $
        L_e \le L_S \eulipx ,
    $
    where the first term vanishes because $S_u'$ is linear (constant operator norm), and the second follows from Lipschitz continuity of $\diffwrt{e}{u}$. Boundedness of $\X$ ensures all constants are uniform.
\end{proof}

\subsection{The reduced adjoint equation}
\label{sec:adjoint}

In this subsection, we introduce the adjoint equation and its reduced form, which are essential for differentiating the data term (\cref{sec:parabolic:dataterm}). We also present the corresponding Lipschitz properties of the adjoint operator, which will be used to derive Lipschitz estimates for the reduced gradient.

We recall that
\[
    F(\triple) \defeq J(S_u(\triple), \triple),
\]
where, following \eqref{eq:weak-formulation-and-solution-operator}, $S_u: \X \to U$ arises from the satisfaction of
\begin{equation}
    \label{eq:dataterm:esu}
    e(S_u(x), x)=0.
\end{equation}
We will now construct Lipschitz estimates for $F'$.

By \cref{lem:Fret-diff}, $e(\cdot,\triple)$ is Fréchet differentiable in $u$. Since $\diffwrt{e}{u}(S_u(\triple),\triple): U \to W^*$ is boundedly invertible by \cref{prop:adjoint-inverse}, the implicit function theorem (see \cite[Thm.~4.3]{zeidler1986nonlinear}) yields
\[
    S_u'(\triple) = - \diffwrt{e}{u}(S_u(\triple),\triple)^{-1} \diffwrt{e}{\triple}(S_u(\triple),\triple).
\]
Thus, $S_u: \X \to U$ is Fréchet differentiable at $\triple$. The total derivative of \(F\) with respect to the parameter triple \(\triple =  (\mu,k,c)\) is now obtained from the chain rule
\begin{equation}
    \label{eq:dataterm:fprime}
    F'(\triple) = \diffwrt{J}{\triple}(S_u(\triple), \triple) + \diffwrt{J}{u}(S_u(\triple), \triple)S_u'(\triple),
\end{equation}
and
\begin{align*}
    \diffwrt{J}{\triple}(u, \triple)
     &
    =
    \bigl(
    \diffwrt{J}{\mu}(u, \triple),
    \diffwrt{J}{k}(u, \triple),
    \diffwrt{J}{c}(u, \triple)
    \bigr)
    \quad\text{and}\quad
    \\
    S_u'(\triple)
     &
    =
    \bigl(
    \diffwrt{S_u}{\mu}(\triple),
    \diffwrt{S_u}{k}(\triple),
    \diffwrt{S_u}{c}(\triple)
    \bigr).
    \shortintertext{Hence,}
    F'(x)
     & = \big(\diffwrt{J}{u}(u, \triple)\,\diffwrt{S_u}{\mu}(\triple) + \diffwrt{J}{\mu}(u, \triple)\big)
    \\
    \MoveEqLeft[-1]
    + \big(\diffwrt{J}{u}(u, \triple)\,\diffwrt{S_u}{k}(\triple) + \diffwrt{J}{k}(u, \triple)\big)
    \\
    \MoveEqLeft[-1]
    + \big(\diffwrt{J}{u}(u, \triple)\,\diffwrt{S_u}{c}(\triple) + \diffwrt{J}{c}(u, \triple)\big).
\end{align*}

Differentiating \eqref{eq:dataterm:esu} with respect to $\triple$ yields
\begin{equation}
    \label{eq:dataterm:basic-adjoint}
    \diffwrt{e}{u}(S_u(\triple), \triple)S_u'(\triple) + \diffwrt{e}{\triple}(S_u(\triple), \triple) = 0.
\end{equation}
Formally, this gives
\[
    S_u'(\triple) = -\diffwrt{e}{u}(S_u(\triple), \triple)^{-1}\diffwrt{e}{\triple}(S_u(\triple), \triple),
\]
where
\[
    \diffwrt{e}{\triple}(S_u(\triple), \triple) =
    \bigl(
    \diffwrt{e}{\mu}(u, \triple),
    \diffwrt{e}{k}(u, \triple),
    \diffwrt{e}{c}(u, \triple)
    \bigr)
    \in \linear(\X, W^*).
\]
\Cref{lem:reduced-adjoint} will show that there exists a unique adjoint variable \( w_\triple \in U \) such that
\begin{equation}
    \label{eq:dataterm:reduced-adjoint}
    w_\triple \diffwrt{e}{u}(S_u(\triple), \triple) = \diffwrt{J}{u}(S_u(\triple), \triple)
    \quad\text{in } W^*,
\end{equation}
equivalently
\[
    \diffwrt{e}{u}(S_u(\triple), \triple)^* w_\triple = \diffwrt{J}{u}(S_u(\triple), \triple)
    \quad\text{in } U^*.
\]
Applying $w_\triple$ to both sides of \eqref{eq:dataterm:basic-adjoint} and using \eqref{eq:dataterm:reduced-adjoint} yields
\begin{equation}
    \label{eq:dataterm:reduced-adjoint-reco}
    \diffwrt{J}{u}(S_u(\triple), \triple)S_u'(\triple) + w_\triple \diffwrt{e}{\triple}(S_u(\triple), \triple) = 0.
\end{equation}
Using this expression in \eqref{eq:dataterm:fprime} establishes
\[
    F'(x) = \diffwrt{J}{\triple}(S_u(\triple), \triple) - w_\triple\, \diffwrt{e}{\triple}(S_u(\triple), \triple).
\]
We call \eqref{eq:dataterm:reduced-adjoint} the \term{reduced adjoint equation}, as compared to solving the operator $S'_u(x)$ from the \term{basic adjoint equation} \eqref{eq:dataterm:basic-adjoint}, the dimension of the unknown $w_\triple$ is reduced in discretisations.

\begin{remark}[Classical adjoint equation]
    \label{rem:classical-adjoint}
    The abstract adjoint $w_\triple\in U$ from \cref{lem:reduced-adjoint} solves the classical
    backward equation
    \begin{equation}
        \label{eq:adjoint}
        \begin{aligned}
            -\partial_t w_\triple - \grad \cdot (k \grad w_\triple)- \grad \!\cdot (c\,w_\triple) & = \grad_u J(S_u(\triple), \triple) &  & \text{in } \Omega_T,
            \\
            k \frac{\partial w_\triple}{\partial n} + (c \cdot n) w_\triple                       & = 0                                &  & \text{on } \Sigma_1,
            \\
            w_\triple                                                                             & = 0                                &  & \text{on } \Sigma_2,
            \\
            w_\triple(\cdot,T)                                                                    & = 0                                &  & \text{in } \Omega .
        \end{aligned}
    \end{equation}
    This follows by testing \eqref{eq:dataterm:reduced-adjoint} against $v\in U$.
\end{remark}

\begin{remark}
    The reduced gradient \eqref{eq:dataterm:reduced-adjoint} requires computing $  w_\triple \diffwrt{e}{\triple}(S_u(\triple), \triple)$ where $w_\triple \in U$ solves the adjoint equation \eqref{eq:adjoint}. For perturbations $(\eta,h,d) \in \X$, this action takes the explicit vector form:
    \[
        w_\triple \, \diffwrt{e}{\triple}(S_u(\triple), \triple)(\eta,h,d)
        =
        \begin{pmatrix}
            -\iprod{\bar{w}_\triple}{\eta}
            \\
            \int_{\Omega_T} h \, \grad u \cdot \grad w_\triple \dx \dd t
            \\
            \int_{\Omega_T} (d \cdot \grad u)\, w_\triple \dx \dd t
        \end{pmatrix}
        ,
    \]
    where
    \[
        \bar{w}_\triple(\spatial) \defeq \int_I w_\triple(\spatial, t)\dd t,
    \]
    and the measure term is
    \[
        - \int_{\Omega_T} w_\triple \d(\eta \otimes t) = -\iprod{\bar{w}_\triple}{\eta}.
    \]
    \Cref{sec:implementation} shows that these terms arise naturally from the adjoint solve, enabling efficient gradient computation via matrix-free operators. Further details are available in our software implementation \cite{dangvalkonen2026implementation}.
\end{remark}

\begin{proposition}[Uniform adjoint invertibility]
    \label{prop:adjoint-inverse}
    Suppose \cref{ass:main} is satisfied and let \( e \) be defined by \eqref{eq:weak-formulation}. For $\triple \in \X$, set
    \[
        e_u(\triple) \defeq \diffwrt{e}{u}(S_u(\triple), \triple) : U \to W^*.
    \]
    Then, the adjoint operator
    \[
        e_u(\triple)^* : W \to U
    \]
    is uniformly boundedly invertible. More precisely, there exists a constant \( C_A > 0 \), independent of $\triple \in \X$, such that
    \[
        \norm{ [e_u(\triple)^*]^{-1} }_{\linear(W; U)} \le C_A.
    \]
\end{proposition}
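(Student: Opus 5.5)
We reduce invertibility of $e_u(\triple)^*$ to invertibility of $e_u(\triple)$ itself. By the closed range theorem, for a bounded operator $B\in\linear(U;W^*)$ between reflexive spaces, $B^*$ is boundedly invertible exactly when $B$ is, and then $\norm{(B^*)^{-1}} = \norm{B^{-1}}$. So the plan is to show that $e_u(\triple)$ is bijective from $U$ onto $W^*$, with an inverse bound independent of $\triple$, and then transfer the bound to the adjoint. Two observations help. First, by \eqref{eq:diff-u}, $e_u(\triple)$ depends only on $(k,c)$ and not on $u$ or $\mu$. Second, by \cref{lem:eu-lipschitz} its norm is bounded by $C(1+k_{\max}+C_c)$. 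Uniformity in $\triple$ therefore reduces to uniformity over the coefficient class of \cref{ass:main}(ii)--(iii).

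\textbf{Step 1: surjectivity with an a priori bound.} Take a right-hand side $(f,\psi)\in W^* = L^2(I;W^{-1,p'}(\Omega))\times L^2(\Sigma_1)$. Solving $e_u(\triple)\tilde u = (f,\psi)$ means finding $\tilde u$ with
\[
\partial_t\tilde u-\grad\cdot(k\grad\tilde u)+c\cdot\grad\tilde u=f,\qquad \tilde u|_{\Sigma_1}=\psi,\qquad \partial_n\tilde u|_{\Sigma_2}=0,
\]
with homogeneous initial data. This is the linear problem \eqref{eq:parabolic:convection-diffusion}, with $\mu$ replaced by a distributional source $f$. We follow the approximation argument behind \cref{thm:existence}. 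We use the maximal $L^p$-regularity of $\partial_t+E$ from \cref{rem:sectorial} and handle the Dirichlet datum $\psi$ by a trace lifting. This gives
\[
\norm{\tilde u}_U\le C_A\bigl(\norm{f}_{L^2(I;W^{-1,p'})}+\norm{\psi}_{L^2(\Sigma_1)}\bigr).
\]
The constant $C_A$ depends only on $\Omega$, $p$, $T$, $k_{\min}$, $k_{\max}$ and $C_c$. The sectoriality constants in \cref{rem:sectorial} depend only on these quantities, because the resolvent estimate comes from the Gårding-type inequality
\[
\iprod{Ev}{v}\ge k_{\min}\norm{\grad v}^2-C_c\norm{\grad v}\norm{v},
\]
combined with a Gronwall/exponential shift $e^{-\lambda t}$ in time that absorbs the convection term.

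\textbf{Step 2: injectivity.} If $e_u(\triple)\tilde u=0$, then $\tilde u$ is a weak solution of the homogeneous problem. The uniqueness part of \cref{thm:existence}, proved by duality with the backward equation \eqref{eq:pde-dual} exactly as in \cref{lem:Su-lipschitz}, gives $\tilde u=0$. Hence $e_u(\triple)$ is bijective, and its inverse is bounded by $C_A$. Passing to the adjoint gives $\norm{[e_u(\triple)^*]^{-1}}_{\linear(W;U)}\le C_A$.

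As an independent check, the same bound can be obtained directly. Here $e_u(\triple)^*$ is the backward operator of \cref{rem:classical-adjoint}, namely $-\partial_t-\grad\cdot(k\grad\,\cdot)-\grad\cdot(c\,\cdot)$ with terminal condition zero, and the bound follows from the same maximal-regularity estimate after the time reversal $t\mapsto T-t$. This uses $\operatorname{div}c\in L^\infty$ from \cref{ass:main}(iii) to control the zeroth-order term.

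\textbf{Main obstacle.} The hardest part is the uniform a priori estimate in Step 1 in the low-integrability space $U=L^2(I;W^{1,p})$ with $p<d/(d-1)$. Energy estimates alone only give control in $L^2(I;H^1)$ for $H^{-1}$ data, not for $W^{-1,p'}$ data. We would first try duality, in the style of Stampacchia: test against solutions of the adjoint problem with data in $L^2(I;W^{-1,p'})$. Since $p'>d$, those adjoint solutions lie in $L^2(I;C_0)$ with bounds uniform in $(k,c)$, by the maximal regularity estimate used in \cref{lem:Su-lipschitz}. Carefully tracking that the constants depend only on $k_{\min}$, $k_{\max}$ and $C_c$ is the delicate point.
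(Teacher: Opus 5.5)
\textbf{The decisive gap is your Step~2.} Neither $U=L^2(I;W^{1,p}(\Omega))$ nor the operator $e$ of \eqref{eq:weak-formulation} carries an initial condition; $u(0)=u_0$ is imposed separately in \cref{def:weak}. So $e_u(\triple)\tilde u=0$ says only that $\tilde u$ solves the homogeneous equation in $\Omega_T$ with zero trace on $\Sigma_1$. It says nothing about $\tilde u(0)$. The uniqueness in \cref{thm:existence} is uniqueness \emph{for prescribed initial data}, so it does not give injectivity.

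Concretely, take any $0\neq\phi\in L^2(\Omega)$ and let $\tilde u$ be its free evolution under the homogeneous problem, with zero data on $\Sigma_1$. Then $\tilde u\in L^2(I;H^1(\Omega))\subset U$ (as $p<2$ for $d\ge2$), and $\tilde u$ satisfies every relation encoded in $e_u(\triple)\tilde u=0$. Every functional in the range of $e_u(\triple)^*$ annihilates this $\tilde u$, so $e_u(\triple)^*$ is not onto $U^*$ and has no inverse. Your phrase ``with homogeneous initial data'' in Step~1 quietly adds exactly the constraint the operator lacks.

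The closed-range transfer fails for a related reason. It presupposes $e_u(\triple)\in\linear(U;W^*)$, but $U$ has no time regularity, so $\tilde u\mapsto\partial_t\tilde u$ is unbounded on $U$ (take $\tilde u(t)=\sin(nt)\varphi$). The norm bound you quote comes from \cref{lem:Fret-diff}, not \cref{lem:eu-lipschitz}, which only controls differences in $\triple$. That bound rests on the false estimate $\norm{\partial_t\tilde u}\le C\norm{\tilde u}_U$. Any correct version must change the spaces: for instance, use a state space of the form $\{u\in L^2(I;V):\partial_t u\in L^2(I;V^*),\ u(0)=0\}$, or add the initial value as a component of $e$ and of its range.

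\textbf{Step~1 has further problems.} First, surjectivity onto the boundary component fails. Traces of $U$ lie in $L^2(I;W^{1-1/p,p}(\Gamma_1))$, which does not contain $L^2(\Sigma_1)$, so no trace lifting produces $\tilde u\in U$ with $\tilde u|_{\Sigma_1}=\psi$ for general $\psi\in L^2(\Sigma_1)$.

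Second, your ``main obstacle'' is misdiagnosed. Since $p<2$, we have $W^{-1,p'}(\Omega)\hookrightarrow H^{-1}(\Omega)$ and $H^1(\Omega)\hookrightarrow W^{1,p}(\Omega)$. So for $f\in L^2(I;W^{-1,p'})$ with zero initial and boundary data, the standard energy estimate with the exponential shift you describe already gives a uniform $L^2(I;H^1)$-bound, hence a $U$-bound; no Stampacchia duality is needed. The genuinely delicate point is the reverse. For $\tilde u\in U$, the term $-\grad\cdot(k\grad\tilde u)$ lies in $W^{-1,p}$, the dual of $W^{1,p'}_0$, not in $W^{-1,p'}$; the estimates in \cref{lem:Fret-diff} silently use $\norm{\grad v_\Omega}_{L^{p'}}$. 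Uniform $W^{1,p}$-bounds for $W^{-1,p}$ data with merely bounded measurable $k$ are Meyers/Gr\"oger-type results. In general they hold only for $p$ in a neighbourhood of $2$ that shrinks as $k_{\max}/k_{\min}$ grows.

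Third, the inverse of $e_u(\triple)^*:W\to U^*$ maps $U^*\to W$. The bound you transfer therefore lives in $\linear(U^*;W)$, not $\linear(W;U)$.

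\textbf{Comparison with the paper.} The paper's own proof is essentially your ``independent check'': sectoriality of $E(\triple)$ and maximal $L^p$-regularity applied directly to the backward problem with $w(T)=0$. It passes over all of these points as well. So the defect lies largely in how $U$, $W$ and $e$ are set up, but your Step~2 is where it becomes an explicit false inference.
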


\begin{proof}Let $E(\triple): W^{1,p}_0(\Omega) \to (W^{1,p}_0(\Omega))^*$ be defined weakly by
    \[
        \dualprod{E (\triple)v}{w}
        \defeq \iprod{k\nabla v}{\nabla w} + \dualprod{c\cdot\nabla v}{w},
        \quad\text{for all}\quad
        v,\, w \in W^{1,p}_0(\Omega),
    \]
    where $k \ge k_{\min} > 0$ and $c \in L^\infty(\Omega; \spaceofdomain)$. Then $e_u(\triple)[v]=\int_{\Omega_T}\bigl(-\partial_t v\,\phi+k\grad v\cdot\grad\phi+(c\cdot\grad v)\phi\bigr) \dx \dd t$ as derived in \eqref{eq:diff-u}, is the weak form of $\partial_t v+ E(\triple)\, v=0$.
    The operator $E(\triple)$ is sectorial on $L^p(\Omega)$ under uniform ellipticity
    and suitable assumptions on the coefficients (see Remark~\ref{rem:sectorial}). Hence, $\partial_t + E (\triple)$ admits maximal $L^p$-regularity (see \cite[Thm.~4.3.1]{lunardi2011analytic}). That is,
    \[
        v, \partial_t v, E (\triple) v \in L^p(\Omega_T) \quad \text{for all} \, f \in L^p(\Omega_T).
    \]
    and
    \[
        \norm{v}_{L^p(\Omega_T)}+\norm{\partial_t v}_{L^p(\Omega_T)}+\norm{E(\triple)v}_{L^p(\Omega_T)}\le C\norm{f}_{L^p(\Omega_T)}.
    \]
    The adjoint $e_u(\triple)^*$ solves the backward equation $-\partial_t w_\triple+ E(\triple)^*w_\triple=f$. Since $E(\triple)$ is sectorial on $L^p(\Omega)$, its adjoint $E (\triple)^*$ is also sectorial on $L^{p'}(\Omega)$
    (see \cite[Chap.~1--2]{Pazy1983}), so by maximal regularity (see \cite[Thm.~4.3.1]{lunardi2011analytic}) and \cref{rem:sectorial})
    gives the uniform estimates
    \[
        \norm{w_\triple}_U+\norm{\partial_t w_\triple}_U+\norm{E(\triple)^*w_\triple}_U\le C\norm{f}_U,
    \]
    with $C$ independent of $\triple \in \X$ (depends on $k_{\min}$, $|c|_{L^\infty}$, $\Omega$).
    Thus, for $e_u(\triple)^*w_\triple=f$,
    \[
        \norm{w_\triple}_U\le C\norm{f}_U\implies\bignorm{[e_u(\triple)^*]^{-1}}_{\linear(W; U)}\le C=C_A.
        \qedhere
    \]
\end{proof}

\begin{lemma}[Solvability of the reduced adjoint equation]
    \label{lem:reduced-adjoint}
    Suppose \cref{ass:main} holds and let $e(u(\triple), \triple)$ with $ u \defeq S_u(\triple)$ be defined as in \eqref{eq:weak-formulation}.
    Assume the data fidelity $J: U \times \X \to \mathbb{R}$ is (partially) Fréchet differentiable w.r.t. $u$ with
    \[
        \diffwrt{J}{u}(u, \triple) \in U^* \quad \text{for all } (u, \triple) \in U \times \X.
    \]
    Then for every $\triple \in \X$, there exists a unique adjoint state $w_\triple \in U$ solving the reduced adjoint equation
    \begin{equation}
        w_\triple \diffwrt{e}{u}(S_u(\triple), \triple) = \diffwrt{J}{u}(S_u(\triple), \triple)
        \quad\text{in } W^*.
    \end{equation}
\end{lemma}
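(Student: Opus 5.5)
The reduced adjoint equation is an operator equation for $w_\triple$ in the dual of $\diffwrt{e}{u}$. The plan is to read it as $e_u(\triple)^* w_\triple = \diffwrt{J}{u}(S_u(\triple),\triple)$ and obtain existence and uniqueness from the uniform invertibility in \cref{prop:adjoint-inverse}.

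\textbf{Step 1: functional-analytic setting.} Fix $\triple\in\X$, write $u\defeq S_u(\triple)$, which exists uniquely by \cref{thm:existence}, and set $e_u(\triple)\defeq\diffwrt{e}{u}(u,\triple)\in\linear(U;W^*)$ as given by \cref{lem:Fret-diff}. The expression $w_\triple\,\diffwrt{e}{u}(u,\triple)$ is the composition $\tilde u\mapsto\dualprod{e_u(\triple)\tilde u}{w_\triple}$. Since $U$ and $W$ are reflexive, this composition is exactly the Banach-space adjoint $e_u(\triple)^*w_\triple\in U^*$. The equation to solve is therefore
\[
    e_u(\triple)^* w_\triple = \diffwrt{J}{u}(u,\triple),
\]
with the right-hand side in $U^*$ by hypothesis.

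\textbf{Step 2: existence and uniqueness.} By \cref{prop:adjoint-inverse}, $e_u(\triple)^*$ is boundedly invertible, with the inverse norm bounded by $C_A$ uniformly in $\triple$. So I set
\[
    w_\triple\defeq[e_u(\triple)^*]^{-1}\diffwrt{J}{u}(u,\triple).
\]
This gives existence directly. Uniqueness follows from injectivity of $e_u(\triple)^*$: if two solutions exist, their difference lies in the kernel and is therefore zero. As a by-product we get $\norm{w_\triple}\le C_A\norm{\diffwrt{J}{u}(u,\triple)}_{U^*}$, which will be useful for the later Lipschitz estimates.

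\textbf{Main obstacle: matching the spaces.} The proposition as stated has $e_u(\triple)^*:W\to U$, whereas the dual-space bookkeeping gives $e_u(\triple)^*:W^{**}\cong W\to U^*$. The lemma asks for $w_\triple\in U$ and places the identity in $W^*$.

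I will first make the identifications explicit. I read $w_\triple$ as an element of the test space $W$: its $\Omega$-component lies in $L^2(I;W^{1,p}_0)$, which embeds into $U$, and we identify $W^{**}=W$ by reflexivity. The identity is then to be understood as an equality of functionals on $U$.

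I would then justify invertibility more concretely, in case the abstract statement is too loose. The adjoint equation is the backward problem \eqref{eq:adjoint} in \cref{rem:classical-adjoint}. Under the time reversal $t\mapsto T-t$, it becomes a forward convection–diffusion problem with the same uniformly elliptic principal part; the convection term now appears in divergence form, $-\grad\cdot(c\,w)$. The well-posedness argument of \cref{thm:existence}, via sectoriality (\cref{rem:sectorial}) and maximal regularity, then applies with a right-hand side in $U^*=L^2(I;W^{-1,p'})$.

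The genuinely delicate point is the duality pairing of exponents $p$ and $p'$ between the forward and adjoint problems. If it causes trouble, my first fallback is to work in the Hilbert setting $p=2$. There the energy method together with Lions' theorem gives a unique $w_\triple\in L^2(I;H^1_0)\cap H^1(I;H^{-1})$. This suffices because the right-hand side $A^*(Au-b)$ lies in $L^2(\Omega_T)$.
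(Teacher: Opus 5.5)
Your Steps~1--2 are exactly the paper's proof. The paper likewise sets $w_\triple \defeq T(\triple)^{-1}\diffwrt{J}{u}(S_u(\triple),\triple)$, with $T(\triple)=e_u(\triple)^*$ regarded as a map $W\to U^*$, invokes \cref{prop:adjoint-inverse} for existence, and obtains uniqueness from injectivity. It also silently makes the same identification of an element of $W$ with an element of $U$ that you spell out. The one caution concerns your $p=2$ fallback, which is not a substitute. For $d\ge 2$, the exponent $p=2$ lies outside the range $1<p<d/(d-1)$ of \cref{thm:existence}. Moreover, for the identity to hold against every $\tilde u\in U$, whose gradient $\grad\tilde u$ is only in $L^2(I;L^p(\Omega))$ with $p<2$, you would need $\grad w_\triple\in L^2(I;L^{p'}(\Omega))$ with $p'>2$. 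Regularity $w_\triple\in L^2(I;H^1_0(\Omega))$ does not provide this.
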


\begin{proof}By \cref{lem:Fret-diff}, $e(\cdot,\triple)$ is Fréchet differentiable in $u$ at $u \defeq S_u(\triple)$,
    so $\diffwrt{e}{u}(S_u(\triple),\triple): U \to W^*$ is bounded linear. By \cref{prop:adjoint-inverse},
    its adjoint
    \[
        T(\triple) \defeq \diffwrt{e}{u}(S_u(\triple),\triple)^*: W \to U^*
    \]
    is \emph{uniformly boundedly invertible}, hence $T(\triple)^{-1}: U^* \to W$ exists and is bounded.

    Let
    \[
        w_\triple \defeq T(\triple)^{-1} \diffwrt{J}{u}(S_u(\triple),\triple) \in U.
    \]
    By definition of $T(\triple)$, we obtain the dual form
    \[
        T(\triple) w_\triple = \diffwrt{e}{u}(S_u(\triple),\triple)^* w_\triple = \diffwrt{J}{u}(S_u(\triple),\triple) \quad \text{in } U^*.
    \]
    The primal form follows immediately by definition of adjoint operator, testing with $v\in U$, we get
    \[
        \iprod{\diffwrt{e}{u}(S_u(\triple),\triple)^* w_\triple}{ v}_{U^*,U} = \iprod{\diffwrt{J}{u}(S_u(\triple),\triple)}{v}_{U^*,U}
        \quad\text{for all}\quad v\in U.
    \]
    By \cref{prop:adjoint-inverse}, $T(\triple)$ is uniformly boundedly invertible, i.e., there exists $C_A>0$,
    independent of $\triple\in \X$, such that
    \[
        \norm{T(\triple)^{-1} v}_{W} \le C_A \norm{v}_{U^*} \quad\text{for all}\quad v\in U^*.
    \]
    For uniqueness, suppose $T(\triple) w_1 = T(\triple) w_2$. Then $T(\triple)(w_1-w_2)=0$, so
    \[
        w_1-w_2 = T(\triple)^{-1} \bigl( T(\triple)(w_1-w_2) \bigr) = 0.
    \]
    Thus $T(\triple)$ is injective. It follows that $w_\triple$ is unique in $U$.
\end{proof}

\begin{example}[Quadratic data fidelity terms]
    \label{ex:quadratic-J}
    A standard choice of quadratic data fidelity satisfying \cref{lem:reduced-adjoint}:
    \[
        J(u, \triple) = \frac{1}{2} \norm{Au - z_d}_Y^2, \quad A \in \linear(U; Y),\ z_d \in Y,
    \]
    where $Y$ is a Hilbert observation space (e.g., $L^2(\Omega_T)$). The Fréchet derivative with respect to $u$ is given by
    \[
        \diffwrt{J}{u}(u, \triple) = A^*(Au - z_d) \in U^*.
    \]
    Moreover, it satisfies the bound
    \[
        \norm{\diffwrt{J}{u}}_{U^*} \le \norm{A^*}_{\linear(Y;U^*)} \bigl( \norm{A}_{\linear(U;Y)} \norm{u}_U + \norm{z_d}_Y \bigr) \le M_J < \infty.
    \]
    In addition, the derivative is Lipschitz continuous with respect to $u$:
    \[
        \norm{\diffwrt{J}{u}(u_2,\triple_2) - \diffwrt{J}{u}(u_1,\triple_1)}_{U^*} \le \norm{A^*}_{\linear(Y;U^*)}
        \norm{A}_{\linear(U;Y)}\norm{u_2-u_1}_U .
    \]
    This quadratic data fidelity satisfies all assumptions of \cref{lem:reduced-adjoint} (Fréchet differentiability $\diffwrt{J}{u} \in U^*$ and boundedness).
\end{example}

\begin{lemma}[Lipschitz continuity of the adjoint operator]
    \label{lem:A-lipschitz}
    Assume that \cref{ass:main} holds and that \( e \) is defined by \eqref{eq:weak-formulation}.
    Let
    \[
        e_u(\triple) \defeq \diffwrt{e}{u}(S_u(\triple), \triple) : U \to W^*.
    \]
    Then there exists a constant \( L_A \ge 0 \) such that
    \[
        \norm{ e_u(\triple_2)^* - e_u(\triple_1)^* }_{\linear(W; U)}
        \le L_A \norm{\triple_2 - \triple_1}_{\X}
        \quad \text{for all } \triple_1, \triple_2 \in \X.
    \]
\end{lemma}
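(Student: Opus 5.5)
The adjoint of a bounded operator between Banach spaces has the same operator norm as the operator itself. I will therefore reduce the statement to \cref{lem:eu-lipschitz}. Concretely, for $\triple_1,\triple_2\in\X$ the difference $e_u(\triple_2)^*-e_u(\triple_1)^*$ is the adjoint of $e_u(\triple_2)-e_u(\triple_1)\in\linear(U;W^*)$. Hence
\[
    \norm{e_u(\triple_2)^*-e_u(\triple_1)^*}
    = \norm{e_u(\triple_2)-e_u(\triple_1)}_{\linear(U;W^*)},
\]
where the norm on the left is taken in the adjoint operator space in which the statement writes $\linear(W;U)$. That space is really $\linear(W^{**};U^*)$, which is $\linear(W;U^*)$ after identification. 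For this identification I will use reflexivity of $U$ and $W$, recorded in \cref{sec:existence}: the spaces $L^2(I;W^{1,p})$, $L^2(I;W_0^{1,p})$ and $L^2(\Sigma_1)$ are reflexive. I would state this identification explicitly at the start, so that the norm in the statement has a definite meaning.

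Next, I insert $u_i=S_u(\triple_i)$ into \cref{lem:eu-lipschitz}. This gives
\[
    \norm{e_u(\triple_2)-e_u(\triple_1)}_{\linear(U;W^*)}
    \le \eulipu\norm{S_u(\triple_2)-S_u(\triple_1)}_U+\eulipx\norm{\triple_2-\triple_1}_\X .
\]
Since \cref{lem:eu-lipschitz} yields $\eulipu=0$, the first term drops out. Even without this, \cref{lem:Su-lipschitz} bounds it by $\eulipu L_S\norm{\triple_2-\triple_1}_\X$. So one may take $L_A=\eulipx+\eulipu L_S$, which reduces to $L_A=\eulipx=C_E$.

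As a sanity check, I would also verify the estimate directly at the level of bilinear forms. We have
\[
    \dualprod{(e_u(\triple_2)^*-e_u(\triple_1)^*)v}{\tilde u}
    =\int_{\Omega_T}\bigl((k_2-k_1)\grad\tilde u\cdot\grad v_\Omega+((c_2-c_1)\cdot\grad\tilde u)v_\Omega\bigr)\dx\dd t .
\]
The $\partial_t$ term and the trace term cancel, because they do not depend on $\triple$. This integral is bounded by Hölder's inequality with exponents $p$ and $p'$, which gives
\[
    \bigl(\norm{k_2-k_1}_{L^\infty}+\norm{c_2-c_1}_{L^\infty}\bigr)\norm{\grad\tilde u}_{L^p}\bigl(\norm{\grad v_\Omega}_{L^{p'}}+\norm{v_\Omega}_{L^{p'}}\bigr),
\]
and Poincaré's inequality then controls the last factor. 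The proof finishes by taking the supremum over unit $\tilde u$ and $v$.

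\textbf{Main obstacle.} The only delicate point is the functional-analytic bookkeeping. The target space should be $U^*$ and not $U$, and the test function $v_\Omega$ must be paired in $L^{p'}$-type norms that are controlled by $\norm{v}_W$. This requires consistency between $W^{1,p}_0$ and $W^{1,p'}_0$ in the definition of $W$, which the paper treats loosely. I would first try to handle it by working with the bilinear-form representation above and defining both operator norms through the same supremum, so that the identity of norms holds by construction. The remaining steps are routine.
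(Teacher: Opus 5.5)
Your proposal is correct and is essentially the paper's own argument: the isometry $\norm{T^*}=\norm{T}$ reduces the claim to \cref{lem:eu-lipschitz} evaluated at $u_i=S_u(\triple_i)$, and \cref{lem:Su-lipschitz} then yields $L_A=\eulipu L_S+\eulipx$, which collapses to $\eulipx$ since $\eulipu=0$. Your observation that the adjoint naturally lives in $\linear(W;U^*)$ rather than $\linear(W;U)$ correctly clarifies the paper's loose notation. Reflexivity is not even needed here: the norm of $T^*$ restricted to $W\subset W^{**}$ already equals $\norm{T}$ by the definition of the $W^*$-norm. The bilinear-form check is a harmless extra.
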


\begin{proof}By definition,
    \[
        e_u(\triple_2)^* - e_u(\triple_1)^*
        = \diffwrt{e}{u}(S_u(\triple_2), \triple_2)^* - \diffwrt{e}{u}(S_u(\triple_1), \triple_1)^*.
    \]
    Since the adjoint map is isometric, we have
    $
        \norm{T^*}_{\linear(W; U)} = \norm{T}_{\linear(U; W^*)}.
    $
    Hence,
    \[
        \norm{ e_u(\triple_2)^* - e_u(\triple_1)^* }_{\linear(W; U)}
        = \norm{ \diffwrt{e}{u}(S_u(\triple_2), \triple_2)
        - \diffwrt{e}{u}(S_u(\triple_1), \triple_1) }_{\linear(U; W^*)}.
    \]
    By \cref{lem:eu-lipschitz},
    \[
        \norm{ \diffwrt{e}{u}(S_u(\triple_2), \triple_2)- \diffwrt{e}{u}(S_u(\triple_1), \triple_1) }_{\linear(U; W^*)}
        \le \eulipu  \norm{S_u(\triple_2) - S_u(\triple_1)}_U+ \eulipx  \norm{\triple_2 - \triple_1}_{\X}.
    \]
    By \cref{lem:Su-lipschitz},
    \[
        \norm{S_u(\triple_2) - S_u(\triple_1)}_U \le L_S \norm{\triple_2 - \triple_1}_{\X}.
    \]
    Thus,
    \[
        \norm{ e_u(\triple_2)^* - e_u(\triple_1)^* }_{\linear(W; U)}
        \le \eulipu  L_S \norm{\triple_2 - \triple_1}_{\X}+ \eulipx  \norm{\triple_2 - \triple_1}_{\X}
        = (\eulipu  L_S + \eulipx )\norm{\triple_2 - \triple_1}_{\X}.
    \]
    Setting \( L_A \defeq \eulipu  L_S + \eulipx \) completes the proof.
\end{proof}

\subsection{The differential of the data term}
\label{sec:parabolic:dataterm}

Recalling the expression \eqref{eq:dataterm:fprime}, we now proceed with the Lipshitz estimates for $F'$. We require:

\begin{assumption}[Data fidelity regularity]
    \label{ass:dataterm:ju-lip}
    The data fidelity $J: U \times \X \to \R$ is Fréchet differentiable with
    $\diffwrt{J}{u}: U \times \X \to U^*$ and $\diffwrt{J}{\triple}: U \times \X \to \X^*$
    Lipschitz continuous:
    \begin{align*}
        \norm{\diffwrt{J}{u}(u_2, \triple_2) - \diffwrt{J}{u}(u_1, \triple_1)}_{U^*}
         & \le \julipu \norm{u_2 - u_2}_U + \julipx \norm{\triple_2 - \triple_1}_{\X},
        \quad\text{and}
        \\
        \norm{\diffwrt{J}{\triple}(u_2, \triple_2) - \diffwrt{J}{\triple}(u,\triple)}_{\X^*}
         &
        \le \jxlipu \norm{u_2 - u_2}_U + \jxlipx \norm{\triple_2 - \triple_1}_\X.
    \end{align*}

    for all $u_1,u_2 \in U$ and $\triple_1,\triple_2 \in \X$
    for some $\julipu,\julipx,\jxlipu,\jxlipx \ge 0$.
\end{assumption}

We first estimate the first factor of the term $\diffwrt{J}{u}(S_u(\triple), \triple)S_u'(\triple)$ in \eqref{eq:dataterm:fprime}.

\begin{lemma}[Lipschitz continuity of the Jacobian]
    \label{lemma:dataterm:ju-lip}
    Suppose \cref{ass:main,ass:dataterm:ju-lip} hold.
    Then, the mapping $\triple \mapsto \diffwrt{J}{u}(S_u(\triple), \triple): \X\to U^*$ is Lipschitz continuous, i.e.,
    \[
        \norm{\diffwrt{J}{u}(S_u(\triple_2),\triple_2) - \diffwrt{J}{u}(S_u(\triple_1),\triple_1)}_{U^*} \le \jfullulipx \norm{\triple_2-\triple_1}_{\X}, \quad \text{for all}\quad \triple_1,\triple_2\in\X,
    \]
    where $\jfullulipx = \julipu L_S + \julipx>0$ with $L_S$ from \cref{lem:Su-lipschitz}.
\end{lemma}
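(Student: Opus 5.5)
The argument is a direct composition of the Lipschitz property of $\diffwrt{J}{u}$ in \cref{ass:dataterm:ju-lip} with the Lipschitz continuity of the solution operator in \cref{lem:Su-lipschitz}. Fix $\triple_1,\triple_2\in\X$ and abbreviate $u_i \defeq S_u(\triple_i)$, $i=1,2$. Both $u_i\in U$ are well defined by \cref{thm:existence}, so the pairs $(u_i,\triple_i)$ lie in $U\times\X$, where \cref{ass:dataterm:ju-lip} applies.

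First, apply the first inequality of \cref{ass:dataterm:ju-lip} with $u_1,u_2$ and $\triple_1,\triple_2$. Read with the evidently intended $\norm{u_2-u_1}_U$ in place of the typo $\norm{u_2-u_2}_U$, it gives
\[
    \norm{\diffwrt{J}{u}(u_2,\triple_2) - \diffwrt{J}{u}(u_1,\triple_1)}_{U^*}
    \le \julipu \norm{u_2-u_1}_U + \julipx \norm{\triple_2-\triple_1}_\X.
\]
Second, bound the state difference by \cref{lem:Su-lipschitz}: $\norm{u_2-u_1}_U = \norm{S_u(\triple_2)-S_u(\triple_1)}_U \le L_S\norm{\triple_2-\triple_1}_\X$. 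Substituting this into the previous estimate yields
\[
    \norm{\diffwrt{J}{u}(S_u(\triple_2),\triple_2) - \diffwrt{J}{u}(S_u(\triple_1),\triple_1)}_{U^*}
    \le (\julipu L_S + \julipx)\norm{\triple_2-\triple_1}_\X,
\]
which is the claim with $\jfullulipx = \julipu L_S + \julipx$.

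There is no genuine obstacle; the estimate is a two-line chain. The only points to check are bookkeeping. The constant $L_S$ from \cref{lem:Su-lipschitz} must be global on the (bounded) admissible parameter set, since it was built from $\sup_\triple\norm{S_u(\triple)}_U$; this is what makes $\jfullulipx$ independent of $\triple_1,\triple_2$. The norms must also match: $S_u$ is Lipschitz into $U$, and \cref{ass:dataterm:ju-lip} measures $u$-differences in the same $U$-norm, so no embedding constants appear. Finally, the strict positivity $\jfullulipx>0$ asserted in the statement holds only if $\julipu L_S+\julipx>0$. This is the case for any non-trivial data term, for example the quadratic $J$ of \cref{ex:quadratic-J} with $A\neq 0$. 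Otherwise one reads the conclusion as $\jfullulipx\ge 0$, and the trivial case $\jfullulipx=0$ is immediate.
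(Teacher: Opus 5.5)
Your proposal is correct and is exactly the paper's argument: the paper's proof simply says the claim is an immediate consequence of \cref{ass:dataterm:ju-lip} and \cref{lem:Su-lipschitz}, which is the chain $\julipu\norm{S_u(\triple_2)-S_u(\triple_1)}_U + \julipx\norm{\triple_2-\triple_1}_\X \le (\julipu L_S+\julipx)\norm{\triple_2-\triple_1}_\X$ that you wrote out. Your side remarks are accurate bookkeeping that the paper glosses over: the $u_2-u_2$ typo in the assumption, the need for $L_S$ to be uniform over the bounded parameter set, and the fact that strict positivity of $\jfullulipx$ requires a nontrivial data term.
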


\begin{proof}Immediate consequence of \cref{ass:dataterm:ju-lip,lem:Su-lipschitz}.
\end{proof}

\begin{lemma}[Boundedness of the differential of the data fidelity]
    \label{lem:data-bound}
    Suppose \cref{ass:main,ass:dataterm:ju-lip} hold. Then there exists a constant $M_J < \infty$ such that
    \[
        \norm{\diffwrt{J}{u}(S_u(\triple), \triple)}_{U^*} \le M_J
        \quad\text{for all}\quad \triple \in \X.
    \]
\end{lemma}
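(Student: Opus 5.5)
The bound follows from the Lipschitz estimate of \cref{ass:dataterm:ju-lip} anchored at one fixed reference point, combined with a uniform bound on the states $S_u(\triple)$. The paper treats the parameter set as bounded in $\X$ (this is used in the proofs of \cref{lem:Su-lipschitz,lem:ep-lipschitz}), so I would fix a reference point $\triple_0 \in \X$, for instance an admissible $\triple_0=(0,k_0,c_0)$ with constant $k_0 \in [k_{\min},k_{\max}]$. Set $u_0^\ast \defeq S_u(\triple_0)$. By \cref{thm:existence} this is a well-defined element of $U$, and $\norm{u_0^\ast}_U$ is controlled by $\norm{g}_{L^2(\Sigma_1)}+\norm{u_0}_{L^2(\Omega)}$. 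The quantity $\diffwrt{J}{u}(u_0^\ast,\triple_0)\in U^*$ is then a fixed finite number.

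Next I would write the triangle inequality
\[
\norm{\diffwrt{J}{u}(S_u(\triple),\triple)}_{U^*}
\le \norm{\diffwrt{J}{u}(u_0^\ast,\triple_0)}_{U^*}
+ \norm{\diffwrt{J}{u}(S_u(\triple),\triple)-\diffwrt{J}{u}(S_u(\triple_0),\triple_0)}_{U^*}.
\]
I would estimate the second term with \cref{lemma:dataterm:ju-lip}, which bounds it by $\jfullulipx\norm{\triple-\triple_0}_\X$. An alternative route is \cref{ass:dataterm:ju-lip} together with \cref{lem:Su-lipschitz}, giving $\julipu\norm{S_u(\triple)-u_0^\ast}_U+\julipx\norm{\triple-\triple_0}_\X$. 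Either way, boundedness of the admissible parameter set gives $\sup_\triple\norm{\triple-\triple_0}_\X\le \diam(\X)<\infty$. This yields
\[
M_J \defeq \norm{\diffwrt{J}{u}(u_0^\ast,\triple_0)}_{U^*} + \jfullulipx \diam(\X).
\]

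The main obstacle is the use of boundedness of $\X$. Without it the statement fails: for instance, for the quadratic $J$ of \cref{ex:quadratic-J}, $\diffwrt{J}{u}$ grows with $\norm{S_u(\triple)}_U$, which grows with $\norm{\mu}_{\Masses}$. I would therefore state explicitly that $\X$ is understood as the bounded admissible set, namely $K$, $C$ and a bounded set of measures, as the earlier lemmas already assume. For the concrete quadratic case I would also add a direct check: by \cref{thm:existence},
\[
\norm{S_u(\triple)}_U\le C_p(\norm{\mu}_{\Masses}+\norm{g}_{L^2(\Sigma_1)}+\norm{u_0}_{L^2(\Omega)}),
\]
and by \cref{ex:quadratic-J},
\[
\norm{\diffwrt{J}{u}}_{U^*}\le\norm{A}\bigl(\norm{A}\norm{S_u(\triple)}_U+\norm{b}\bigr),
\]
which gives an explicit $M_J$.
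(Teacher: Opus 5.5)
Your proof is correct and takes essentially the same route as the paper: a triangle inequality around a fixed reference point, the Lipschitz bound of \cref{ass:dataterm:ju-lip}, and $\sup_{\triple}\norm{\triple-\triple_0}_\X\le\diam(\X)$. The only difference is that you anchor at $(S_u(\triple_0),\triple_0)$ and control $\norm{S_u(\triple)-S_u(\triple_0)}_U$ through $L_S$, whereas the paper anchors at an arbitrary $(\hat u,\hat\triple)$ and uses a uniform bound $\norm{S_u(\triple)}_U\le C_S$ from maximal regularity. Both rely on the admissible parameter set being bounded, which the paper leaves implicit and you rightly state explicitly.
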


\begin{proof}By \cref{ass:dataterm:ju-lip}, $\diffwrt{J}{u}: U \times \X \to U^*$ is Lipschitz continuous:
    \[
        \norm{\diffwrt{J}{u}(u_2, \triple_2) - \diffwrt{J}{u}(u_1, \triple_1)}_{U^*}
        \le \julipu \norm{u_2 - u_1}_U + \julipx \norm{\triple_2 - \triple_1}_{\X}.
    \]
    By maximal $L^p$-regularity (\cite[Thm.~4.3.1]{lunardi2011analytic}) and \cref{ass:main}, the solution operator $S_u: \X \to U$ is uniformly bounded, i.e.,
    \[
        \norm{S_u(\triple)}_U \le C_S \quad \text{for all } \triple \in \X.
    \]
    Fix a reference point $(\hat{u}, \hat{\triple}) \in U \times \X$. Then
    \begin{align*}
        \norm{\diffwrt{J}{u}(S_u(\triple), \triple)}_{U^*}
         & \le \norm{\diffwrt{J}{u}(S_u(\triple), \triple) - \diffwrt{J}{u}(\hat{u}, \hat{\triple})}_{U^*}
        + \norm{\diffwrt{J}{u}(\hat{u}, \hat{\triple})}_{U^*}
        \\
         & \le \julipu \norm{S_u(\triple) - \hat{u}}_U + \julipx \norm{\triple - \hat{\triple}}_{\X}
        + \norm{\diffwrt{J}{u}(\hat{u}, \hat{\triple})}_{U^*}.
    \end{align*}
    Since $S_u$ is uniformly bounded, $S_u(\triple)$ stays in a bounded set in $U$. For bounded $\triple \in \X$, we obtain
    \[
        \norm{\diffwrt{J}{u}(S_u(\triple), \triple)}_{U^*} \le M_J := \julipu (C_S + \norm{\hat{u}}_U) + \julipx \diam(\X) + \norm{\diffwrt{J}{u}(\hat{u}, \hat{\triple})}_{U^*} < \infty.
        \qedhere
    \]
\end{proof}

Finally, we are able to obtain a Lipschitz estimate for the remaining second term of \eqref{eq:dataterm:fprime}.

\begin{theorem}[Partial Lipschitz estimate]
    \label{thm:wp-ep-lipschitz}
    Suppose \cref{ass:main,ass:dataterm:ju-lip} hold.
    Then the mapping $\triple \mapsto \diffwrt{J}{u}(S_u(\triple), \triple)S'(\triple)$ is Lipschitz continuous from $\X$ to $\linear(\X; U^*)$:
    there exists $C_{\mathrm{tot}}^{\mathrm{Lip}}>0$ such that
    \begin{equation*}
        \bignorm{
        \diffwrt{J}{u}(S_u(\triple_1), \triple_1)S'(\triple_1)
        - \diffwrt{J}{u}(S_u(\triple_2), \triple_2)S'(\triple_2)
        }_{\linear(\X; U^*)}
        \le C_{\mathrm{tot}}^{\mathrm{Lip}}\norm{\triple_2-\triple_1}_{\X}
        \quad\text{for all}\quad
        \triple_1,\triple_2\in\X,
    \end{equation*}
    where
    \begin{equation}
        \label{eq:dataterm:cliptot}
        C_{\mathrm{tot}}^{\mathrm{Lip}} = C_A^2(\eulipu L_S + \eulipx )M_J + C_A(\julipu L_S + \julipx) L_S M_e + C_AM_J\eulipx
    \end{equation}
    for constants
    $\eulipu , \eulipx$ (\cref{lem:eu-lipschitz}),
    $M_e$ (\cref{lem:ep-lipschitz}),
    $C_A$ (\cref{prop:adjoint-inverse}),
    $\julipu, \julipx$ (\cref{ass:dataterm:ju-lip}),
    $L_S$ (\cref{lemma:dataterm:ju-lip}), and
    $M_J$ (\cref{lem:data-bound}).
\end{theorem}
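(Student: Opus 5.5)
The plan is to rewrite the product $\diffwrt{J}{u}(S_u(\triple), \triple)S_u'(\triple)$ through the reduced adjoint equation, so that it only involves objects whose Lipschitz behaviour is already known. By \eqref{eq:dataterm:reduced-adjoint-reco} we have
\[
    \diffwrt{J}{u}(S_u(\triple), \triple)S_u'(\triple) = -\,w_\triple\, \diffwrt{e}{\triple}(S_u(\triple), \triple),
    \qquad
    w_\triple = [e_u(\triple)^*]^{-1}\diffwrt{J}{u}(S_u(\triple), \triple),
\]
with $w_\triple$ provided by \cref{lem:reduced-adjoint}.

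Writing $G(\triple)\defeq \diffwrt{e}{\triple}(S_u(\triple),\triple)$, the difference splits telescopically as
\[
    w_{\triple_2}G(\triple_2) - w_{\triple_1}G(\triple_1)
    = (w_{\triple_2}-w_{\triple_1})\,G(\triple_2) + w_{\triple_1}\bigl(G(\triple_2)-G(\triple_1)\bigr).
\]
The second term is bounded by $\norm{w_{\triple_1}}\,L_e\norm{\triple_2-\triple_1}_\X$, using \cref{lem:ep-lipschitz} together with the bound $\norm{w_{\triple_1}}\le C_A M_J$ from \cref{prop:adjoint-inverse} and \cref{lem:data-bound}. The first term is bounded by $M_e\norm{w_{\triple_2}-w_{\triple_1}}$, since $\norm{G(\triple_2)}\le M_e$ by \cref{lem:ep-lipschitz}. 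This yields contributions of the form $C_AM_J\cdot(\text{Lipschitz constant of }G)$ and $M_e\cdot(\text{Lipschitz constant of }w)$.

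The central step is the Lipschitz estimate for $\triple\mapsto w_\triple$. For this I use the resolvent-type identity
\[
    A_2^{-1}f_2 - A_1^{-1}f_1 = A_2^{-1}(f_2-f_1) + A_2^{-1}(A_1-A_2)A_1^{-1}f_1,
    \qquad A_i\defeq e_u(\triple_i)^*,\quad f_i\defeq \diffwrt{J}{u}(S_u(\triple_i),\triple_i).
\]
The first piece is controlled by $C_A$ (\cref{prop:adjoint-inverse}) times $\jfullulipx=\julipu L_S+\julipx$ (\cref{lemma:dataterm:ju-lip}). The second piece is controlled by $C_A\cdot L_A\cdot C_AM_J$, where $L_A=\eulipu L_S+\eulipx$ comes from \cref{lem:A-lipschitz}. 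Hence
\[
    \norm{w_{\triple_2}-w_{\triple_1}}\le \bigl(C_A(\julipu L_S+\julipx)+C_A^2(\eulipu L_S+\eulipx)M_J\bigr)\norm{\triple_2-\triple_1}_\X .
\]
Multiplying by $M_e$ and adding the $G$-term, where I use $L_e\le L_S\eulipx$ and absorb the resulting factors into the constants, should reproduce the shape of \eqref{eq:dataterm:cliptot}. Matching the stated constant exactly requires some bookkeeping: for instance, the factor $L_S M_e$ suggests treating the $\julipu$-term via $\norm{S_u'}\le L_S$ rather than via $M_e$. I would simply arrange the estimates so that the three summands correspond respectively to the operator-perturbation piece, the data-perturbation piece and the $G$-difference piece.

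The main obstacle is that the spaces do not match consistently in the paper. \Cref{prop:adjoint-inverse} states $[e_u^*]^{-1}\in\linear(W;U)$, while \cref{lem:reduced-adjoint} uses it as a map $U^*\to W$, and \cref{lem:A-lipschitz} measures $e_u^*$ in $\linear(W;U)$. I would fix the convention $A_i\colon W\to U^*$ with $A_i^{-1}\colon U^*\to W$ bounded by $C_A$, so that $w_\triple\in W$ pairs with $G(\triple)h\in W^*$, and read every cited bound in these spaces. Under that convention the resolvent identity is legitimate, since $A_1-A_2$ maps $W\to U^*$, and all compositions are well-typed.
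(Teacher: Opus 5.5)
Your proposal follows the paper's proof step by step: the same rewriting of $\diffwrt{J}{u}(S_u(\triple),\triple)S_u'(\triple)$ as $-w_\triple\,\diffwrt{e}{\triple}(S_u(\triple),\triple)$ via the reduced adjoint equation, the same splitting $(w_{\triple_2}-w_{\triple_1})G(\triple_2)+w_{\triple_1}\bigl(G(\triple_2)-G(\triple_1)\bigr)$, and the same resolvent identity for $w_{\triple_2}-w_{\triple_1}$ with $\norm{A_2-A_1}\le(\eulipu L_S+\eulipx)\norm{\triple_2-\triple_1}_\X$; your reading $A_i\colon W\to U^*$, $A_i^{-1}\colon U^*\to W$ is the consistent way to interpret the paper's mixed spaces. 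Do not try to force the exact formula \eqref{eq:dataterm:cliptot}: the argument yields $M_e\bigl(C_A\jfullulipx+C_A^2(\eulipu L_S+\eulipx)M_J\bigr)+C_AM_JL_e$, with $L_e$ from \cref{lem:ep-lipschitz}, which is exactly where the paper's own proof ends before it asserts equality with \eqref{eq:dataterm:cliptot}, but the factors $M_e$ and $L_S$ sit differently in that formula, so no rearrangement reproduces it and you should state the constant your estimate actually gives.
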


\begin{proof}Let $w_\triple$ solve the reduced adjoint equation \eqref{eq:dataterm:reduced-adjoint}, i.e.,
    \[
        w_\triple = [e_u(\triple)^*]^{-1} \diffwrt{J}{u}(S_u(\triple)),
        \quad\text{where}\quad
        e_u(\triple)\defeq\diffwrt{e}{u}(S_u(\triple), \triple):U\to W^*.
    \]
    Then $w_\triple\diffwrt{e}{\triple}(S_u(\triple), \triple) = \diffwrt{J}{u}(S_u(\triple), \triple)S'(\triple)$ by \eqref{eq:dataterm:reduced-adjoint-reco}, so the claim amounts to showing the Lipschitz continuity of $\triple \mapsto w_\triple\diffwrt{e}{\triple}(S_u(\triple), \triple)$.

    Denote $e_\triple(\triplealt)\defeq \diffwrt{e}{\triple}(S_u(\triplealt),\triplealt)$. We decompose
    \[
        \Delta \defeq w_{\triple_2}e_\triple(\triple_2) - w_{\triple_1}e_\triple(\triple_1)
        = (w_{\triple_2}-w_{\triple_1})e_\triple(\triple_2) + w_{\triple_1}(e_\triple(\triple_2)-e_\triple(\triple_1)).
    \]
    By \cref{prop:adjoint-inverse},
    \[
        \norm{w_{\triple_i}}_U \le C_A\norm{\diffwrt{J}{u}(S_u(\triple_i))}_{U^*} \le C_AM_J.
    \]
    \Cref{lem:eu-lipschitz} gives $\norm{e_\triple(\triple_i)}_{\linear(\X;W^*)} \le M_e$ and
    \Cref{lem:ep-lipschitz} gives
    \[
        \norm{e_\triple(\triple_2)-e_\triple(\triple_1)}_{\linear(\X;W^*)} \le L_e\norm{\triple_2-\triple_1}_\X.
    \]
    For the difference, abbreviate $T_i \defeq e_u(\triple_i)^*: W\to U^*$. Then
    \begin{equation}
        \label{eq:difference}
        w_{\triple_2}-w_{\triple_1} = T_2^{-1}\bigl[\diffwrt{J}{u}(S_u(\triple_2))-\diffwrt{J}{u}(S_u(\triple_1))\bigr]
        + (T_2^{-1}-T_1^{-1})\diffwrt{J}{u}(S_u(\triple_1)).
    \end{equation}
    By \cref{lemma:dataterm:ju-lip}, we have
    \begin{equation}
        \label{eq:term1}
        \norm{T_2^{-1}\bigl[\diffwrt{J}{u}(S_u(\triple_2))-\diffwrt{J}{u}(S_u(\triple_1))\bigr]}_U \le C_A\jfullulipx\norm{\triple_2-\triple_1}_\X.
    \end{equation}
    For the resolvent difference, since $u_i = S_u(\triple_i)$, \eqref{eq:lip-diffe} in
    \cref{lem:eu-lipschitz,lem:Su-lipschitz} give
    \[
        \begin{split}
            \norm{T_2-T_1}_{\linear(W;U^*)}
             &
            = \norm{e_u(S_u(\triple_2),\triple_2)^* - e_u(S_u(\triple_1),\triple_1)^*}
            \\
             &
            \le \eulipu \norm{S_u(\triple_2)-S_u(\triple_1)}_U + \eulipx \norm{\triple_2-\triple_1}_\X
            \le
            L_e \norm{\triple_2-\triple_1}_\X
        \end{split}
    \]
    for $L_e \defeq \eulipu L_S + \eulipx$.
    Since $\norm{T_i^{-1}}\le C_A$, the Neumann series yields
    \[
        \norm{T_2^{-1}-T_1^{-1}}_{\linear(U^*;U)} = \norm{T_2^{-1}(T_1-T_2)T_1^{-1}}_{\linear(U^*;U)}
        \le C_A\cdot L_e\cdot C_A \norm{\triple_2-\triple_1}_\X = C_A^2L_e\norm{\triple_2-\triple_1}_\X.
    \]
    Thus,
    \begin{equation}
        \label{eq:term2}
        \norm{(T_2^{-1}-T_1^{-1})\diffwrt{J}{u}(S_u(\triple_1))}_U \le C_A^2L_eM_J\norm{\triple_2-\triple_1}_\X.
    \end{equation}
    Substituting \eqref{eq:term1} and \eqref{eq:term2} into \eqref{eq:difference}, we obtain
    \begin{equation}
        \label{eq:w-lipschitz}
        \norm{w_{\triple_2}-w_{\triple_1}}_U \le (C_A\jfullulipx + C_A^2L_eM_J)\norm{\triple_2-\triple_1}_\X.
    \end{equation}
    Finally, combining \eqref{eq:w-lipschitz} with \cref{lem:ep-lipschitz,lemma:dataterm:ju-lip},
    and using $L_e = \eulipu L_S + \eulipx$ and $\jfullulipx = \julipu L_S + \julipx$ (\cref{lemma:dataterm:ju-lip}),
    we obtain for $C_{\mathrm{tot}}^{\mathrm{Lip}}$ as stated that
    \begin{align*}
        \norm{\Delta}_{\linear(\X;U^*)}
         & \le \norm{w_{\triple_2}-w_{\triple_1}}_U\norm{e_\triple(\triple_2)}_{\linear(\X;W^*)}
        + \norm{w_{\triple_1}}_U\norm{e_\triple(\triple_2)-e_\triple(\triple_1)}_{\linear(\X;W^*)}
        \\
         & \le (C_A\jfullulipx + C_A^2L_eM_J)M_e\norm{\triple_2-\triple_1}_\X
        + C_AM_JL_e\norm{\triple_2-\triple_1}_\X
        \\
         & = C_{\mathrm{tot}}^{\mathrm{Lip}}\norm{\triple_2-\triple_1}_\X.
        \qedhere
    \end{align*}
\end{proof}

Combining \cref{ass:dataterm:ju-lip} regarding $\diffwrt{J}{x}$ with \cref{lem:Su-lipschitz,thm:wp-ep-lipschitz}, we immediately obtain:

\begin{corollary}[Full Lipschitz estimate]
    \label{cor:f-lipschitz-differentiable}
    Suppose \cref{ass:main,ass:dataterm:ju-lip} hold. Then $F: \triple \mapsto J(S_u(\triple), \triple)$ is Lipschitz-differentiable:
    \[
        \norm{F'(\triple_2) - F'(\triple_1)}_{\X^*} \le (C_{\mathrm{tot}}^{\mathrm{Lip}} + \jxlipu L_S + \jxlipx) \norm{\triple_2 - \triple_1}_\X, \quad \triple_1,\triple_2\in\X,
    \]
    where $C_{\mathrm{tot}}^{\mathrm{Lip}}$ is defined in \eqref{eq:dataterm:cliptot}, $\jxlipu, \jxlipx$ arise from \cref{ass:dataterm:ju-lip}, and $L_S$ from \cref{lemma:dataterm:ju-lip}.
\end{corollary}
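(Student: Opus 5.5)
The plan is to start from the chain-rule expression \eqref{eq:dataterm:fprime},
\[
    F'(\triple) = \diffwrt{J}{\triple}(S_u(\triple), \triple) + \diffwrt{J}{u}(S_u(\triple), \triple)S_u'(\triple),
\]
which is valid at every $\triple \in \X$: the implicit function theorem applies because \cref{prop:adjoint-inverse} gives bounded invertibility of $\diffwrt{e}{u}(S_u(\triple),\triple)$. We then estimate the difference $F'(\triple_2)-F'(\triple_1)$ in $\X^*$ by splitting it with the triangle inequality into two pieces. The first piece is the difference of the partial derivatives $\diffwrt{J}{\triple}(S_u(\triple_i),\triple_i)$. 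The second is the difference of the composite terms $\diffwrt{J}{u}(S_u(\triple_i),\triple_i)S_u'(\triple_i)$.

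For the first piece we apply \cref{ass:dataterm:ju-lip} with $u_i = S_u(\triple_i)$. This gives the bound
\[
    \jxlipu \norm{S_u(\triple_2)-S_u(\triple_1)}_U + \jxlipx\norm{\triple_2-\triple_1}_\X.
\]
Inserting \cref{lem:Su-lipschitz} then yields the contribution $(\jxlipu L_S + \jxlipx)\norm{\triple_2-\triple_1}_\X$.

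For the second piece we invoke \cref{thm:wp-ep-lipschitz} directly, which bounds it by $C_{\mathrm{tot}}^{\mathrm{Lip}}\norm{\triple_2-\triple_1}_\X$ with the constant from \eqref{eq:dataterm:cliptot}. Adding the two contributions gives the constant stated in the corollary.

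The only point requiring care is the target space. \Cref{thm:wp-ep-lipschitz} is formulated in $\linear(\X;U^*)$, but here we need an estimate in $\X^*$. Since $\diffwrt{J}{u}(S_u(\triple),\triple)\in U^*$ and $S_u'(\triple)\in\linear(\X;U)$, the composite is a scalar-valued functional on $\X$. Its norm is therefore the operator norm of the composition, and the bound in \cref{thm:wp-ep-lipschitz} transfers without change. Equivalently, one can use the adjoint representation \eqref{eq:dataterm:reduced-adjoint-reco}, which writes the composite as $-w_\triple\,\diffwrt{e}{\triple}(S_u(\triple),\triple)$, and read the estimate of that theorem's proof as a bound in $\X^*$. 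We expect this identification to be the main (mild) obstacle, because it needs the adjoint representation to hold for every $\triple$; this is supplied by \cref{lem:reduced-adjoint}.
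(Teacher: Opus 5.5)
Your proposal is correct and follows the paper's proof: both split $F'$ via \eqref{eq:dataterm:fprime}, bound the $\diffwrt{J}{\triple}$-difference by \cref{ass:dataterm:ju-lip} and \cref{lem:Su-lipschitz} to get $(\jxlipu L_S + \jxlipx)\norm{\triple_2-\triple_1}_\X$, and take the composite term directly from \cref{thm:wp-ep-lipschitz}. The paper does not comment on the target space, so your remark is a useful addition: the composite $\diffwrt{J}{u}(S_u(\triple),\triple)S_u'(\triple)$ lies in $\X^*$, so the space $\linear(\X;U^*)$ in \cref{thm:wp-ep-lipschitz} has to be read as $\X^*$, and that is in fact what the theorem's proof bounds.
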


\begin{proof}Recalling \eqref{eq:dataterm:fprime}, we have
    \begin{equation}
        \label{eq:dataterm:fprime:repeat}
        F'(\triple) = \diffwrt{J}{u}(S_u(\triple),\triple) S_u'(\triple) + \diffwrt{J}{\triple}(S_u(\triple),\triple).
    \end{equation}
    For the second term, \cref{ass:dataterm:ju-lip,lem:Su-lipschitz} yield the following
    \[
        \begin{split}
            \norm{\diffwrt{J}{\triple}(S_u(\triple_2),\triple_2) - \diffwrt{J}{\triple}(S_u(\triple_1),\triple_1)}_{\X^*}
             &
            \le \jxlipu \norm{S_u(\triple_2) - S_u(\triple_1)}_U + \jxlipx \norm{\triple_2 - \triple_1}_\X
            \\
             &
            \le
            (\jxlipu L_S + \jxlipx) \norm{\triple_2 - \triple_1}_\X.
        \end{split}
    \]
    Thus, using \cref{thm:wp-ep-lipschitz} for the first term of \eqref{eq:dataterm:fprime:repeat}, establishes the claim.
\end{proof}

\section{Optimisation with measures}
\label{sec:optimisation}

Now that we have developed our data term, and its derivatives, it is time to solve the inverse problem
We recall that we expect there to be a finite number of leaks, modelled by the measure $\mu$. To introduce this a priori information into our problem, we apply Radon-norm regularisation, considering the problem \eqref{eq:problem}, which has also the auxiliary unknowns $k$ and $c$: the diffusion and the convection fields. Considering reasonably short time intervals, we take the latters to be constant in space and time.

We use the specific choices \eqref{eq:problem:specific} of the data fidelity and regularisation term.
Since the data term $F= J \circ S_u$ is nonconvex, the existing theory of most conditional gradient methods does not extend to it.
In \cref{cor:f-lipschitz-differentiable} we have, however, proved that $F$ is Lipschitz-differentiable, which is enough to apply forward-backward type methods.

\subsection{Basic algorithm}

We apply the forward-backward methods of \cite{tuomov-pointsource,tuomov-unbalanced} with a Radon-norm-squared proximal penalty.
Recall that we write $\triple=(\mu,k,c)$, and similarly $\this\triple=(\this\mu,\this k,\this c)$.
For the basic non-sliding variant, an exact update for the measure $\nexxt\mu$ at iteration number $n \in \N$ would be determined by the surrogate problem
\begin{subequations}
    \label{eq:alg}
    \begin{align}
        \label{eq:alg:mu-step}
         & \nexxt\mu             \in \argmin_{\mu \in \Masses}~
        \diffwrt{F}{\mu}(\thisx) + \iprod{\diffwrt{F}{\mu}(\thisx)}{\mu-\this\mu}
        + \alpha\norm{\mu}_{\Masses} + \delta_{\ge 0}(\mu)
        + \frac{1}{2\tau}\norm{\mu-\this\mu}_{\Masses}^2.
        \intertext{The auxiliary variables (convection and diffusion), would be updated by the standard Hilbert-space forward-backward step}
        \label{eq:alg:kc-step}
         & (\nexxt k, \nexxt c)  = \prox_{\sigma R_{k,c}}((\this k, \this c) - \sigma \grad_{k,c} F(\thisx)).
    \end{align}
\end{subequations}
Here $\tau,\sigma>0$ are step length parameters that have to satisfy
\[
    \tau L_\mu < 1
    \quad\text{and}\quad \sigma L_{k, c} < 1,
\]
where $L_\mu$ and $L_{k,c}$ are the Lipschitz factors of $F'$ with respect to the indicated parameters.\footnote{The Lipschitz factor $L_{k,c}$ is denoted $L_z$ in \cite{tuomov-unbalanced}.}
In \cref{cor:f-lipschitz-differentiable} we have derived a joint factor $L_\mu = L_{k,c}$.

In practise, following \cite{tuomov-unbalanced}, inexact steps are taken for \eqref{eq:alg:mu-step} by solving for some iteration-dependencent tolerances $\nexxt\epsilon>0$ satisfying $\sum_{n=0}^\infty \nexxt\epsilon < \infty$, the inexact optimality condition
\begin{subequations}
    \label{eq:alg:approx}
    \begin{gather}
        \label{eq:alg:approx:0}
        -\nexxt\epsilon \le \this v + \nexxt w + \norm{\nexxt\mu-\this\mu}_{\Masses}\nexxt\omega
        \le \nexxt\epsilon,
        \intertext{where $\this v \defeq \diffwrt{F}{\mu}(\this\triple)$ and $ \nexxt\omega, \nexxt w \in \Predual$ satisfy}
        \label{eq:alg:approx:vars}
        -1 \le \nexxt\omega \le 1,
        \
        \iprod{\nexxt \omega}{\nexxt\mu-\this\mu}=\norm{\nexxt\mu-\this\mu}_{\Masses},
        \
        \nexxt w \le \alpha,
        \
        \iprod{\nexxt w}{\nexxt\mu}=\norm{\nexxt\mu}_{\Masses}.
    \end{gather}
\end{subequations}
To solve these conditions, one finds an (approximate) minimiser $\xi \in \Omega$ of $\this v \in \Predual$,
and writing $\this\mu=\sum_{i=1}^{m_n} \beta_{n,i} \delta_{\xi_{n,i}}$,
forms $\nexxt\mu = \sum_{i=1}^{m_n} \beta_{n+1,i} \delta_{\xi_{n,i}}+ \beta_{n+1,m_n+1} \delta_\xi$
by solving a finite-dimensional optimisation for the escape rates $\{\beta_{n+1,i}\}_{i=1}^{m_n+1}$.
We refer to \cite{tuomov-unbalanced} for further details.

\subsection{Sliding algorithm}

Our second algorithm applies ideas from optimal transport to first transport the Dirac masses of $\this\mu$ to new locations using a transport plan $\nexxt\gamma = \sum_{i=1}^{m_n} \eta_{n,i}\delta_{(x_{n,i}, y_{n,i})} \in \Plans$, where $\eta_{n,i}$ is amount of mass transported from $x_{n,i}$ to the predicted new location $y_{n,i}$.
The transport or “sliding” of the locations is based on doing a gradient step on the locations with $v^n$.
Then the method applies the same insertion and weight optimisation steps as above at the transported measure.
These steps may have to be repeated several times to satisfy technical curvature and remainder conditions from \cite{tuomov-pointsource}.
As we will see in the numerical experiments, the transport or “source sliding” can significantly improve the performance of the algorithm.

To be precise, define the projections $\pi^i(\xi)=\xi_i$, as well as the push-forward measures $\pi_\#^i\mu$ satisfying $\pi_\#^i\mu(A)=\mu(\{\xi \in \Omega \mid \pi^i(\xi) \in A\})$.
A “sliding” version of \eqref{eq:alg} first replaces $\this\mu$ by $\this{\breve\mu}=\this\mu+(\pi_\#^1-\pi_\#^0)\nexxt\gamma$ for a transport plan $\nexxt\gamma\in\Plans$.
Then it applies \eqref{eq:alg} with $\this\mu$ replaced by $\this{\breve\mu}$ and $\this\triple$ replaced by $\this{\breve\triple}=(\this{\breve\mu},\this k,\this c)$.
That is, writing
\[
    \this v=\diffwrt{F}{\mu}(\this\triple),
    \quad\text{and}\quad
    \this{\breve v}=\diffwrt{F}{\mu}(\this{\breve\triple}),
\]
for some sliding parameter $\theta>0$, we first construct $\nexxt\gamma \in \Plans$ that satisfies
\begin{subequations}
    \label{eq:alg-sliding}
    \begin{align}
        \label{eq:alg-sliding:mu-prediction}
         & \newspatial = \spatial - \theta \tau \grad \thisv(\spatial)
        \quad\text{for}\quad \nexxt\gamma \text{-a.e. } (\spatial, \newspatial).
        \\
        \intertext{Then we approximately, as in \eqref{eq:alg:approx}, update}
        \label{eq:alg-sliding:mu-step}
         & \nexxt\mu \in \argmin_{\mu \in \Masses}~
        \diffwrt{F}{\mu}(\this{\breve x}) + \iprod{\diffwrt{F}{\mu}(\this{\breve x})}{\mu-\this{\breve\mu}}
        + \alpha\norm{\mu}_{\Masses} + \delta_{\ge 0}(\mu)
        + \frac{1}{2\tau}\norm{\mu-\this{\breve \mu}}_{\Masses}^2,
        \shortintertext{and}
        \label{eq:alg-sliding:kc-step}
         & (\nexxt k, \nexxt c) \defeq \prox_{\theta R_{k,c}}((\this k, \this c) - \theta \grad_{k,c} F(\this{\breve x})).
    \end{align}
\end{subequations}
This construction may have to be repeated several times until $\nexxt\gamma$ satisfies further restrictions. We typically
\begin{enumerate}[nosep]
    \item Start with the source marginal $\pi_\#^0\nexxt\lambda=\this\mu$, constructing the target marginal $\pi_\#^1\nexxt\gamma$ using \eqref{eq:alg-sliding:mu-prediction}.
          Then $\this{\breve\mu}$ is simply $\this\mu$ transported along the negative gradient $\thisv(\spatial)$.
    \item We reduce the mass transported by $\nexxt\gamma$, repeating  \cref{eq:alg-sliding:mu-step,eq:alg-sliding:kc-step} until further conditions given in \cite{tuomov-pointsource} are satisfied.
\end{enumerate}
For details, we refer to \cite{tuomov-pointsource}.

\section{Aspects of numerical implementation}
\label{sec:implementation}

In this section we describe aspects of numerical implementation of the convection--diffusion equation \eqref{eq:parabolic:convection-diffusion}, as well as its adjoint.
We first describe the implementation of the laser observation operator $A$ in \cref{sec:implementation:laser}.
We then apply the finite element method (FEM) and forward Euler steps to discretize \eqref{eq:parabolic:convection-diffusion} and obtain the corresponding discrete system (\cref{sec:discrete-equation}).
Finally, we describe some details of the optimisation algorithm in \cref{sec:implementation:algorithm}.
Further details are available in our software implementation \cite{dangvalkonen2026implementation}.

We introduce a temporal grid $0 = t_0 < t_1 < \cdots < t_{N_T} = T$ with time step lengths  $\Delta_i \defeq t_{i+1} - t_{i}$, for $i = 0, \dots, N_T-1$.
For simplicity of presentation, we assume that both the laser observation times and time discretisation agree with this grid.

\subsection{Discretisation of the convection--diffusion equation}
\label{sec:discrete-equation}

\paragraph{Discretisation of the concentration (state)}
\label{sec:discrete-equation:forward}

We write $I_i = [t_{i}, t_{i+1}]$, for $i = 0, \dots, N_T-1$ with $t_0 = 0$, $t_{N_T} = T$.
For space discretisation, we choose a finite element basis $\{\phi_m\}_{m=1}^{N_\phi}$ that forms $\mathbb{L}^2 \subset L^2(\Omega)$.
Then we define the space of discretised concentrations by
\[
    \mathbb{U}
    \defeq
    \left\{
    u \in L^2(I \times \Omega)
    \,\middle|\,
    \begin{array}{l}
        u(\freevar, t_i) \in \mathbb{L}^2 \text{ for all } 0 \le i \le N_T,
        \\
        u(x, \freevar)|I_i = \text{constant} \text{ for all } x \in \Omega, \quad i = 0, \dots, N_T-1
    \end{array}
    \right\}.
\]
Denoting the indicator function of $I_i$ by $\chi_i$, each $u \in \mathbb{U}$ admits for some  coefficient vectors $\mathbf{u}_i=(u_1^i, \ldots, u_{N_\phi}^i) \in \R^{N_\phi}$, ($i=0,\ldots,N_T-1$) the representation
\[
    u(\spatial, t)
    = \sum_{i=0}^{N_T-1} \sum_{m=1}^{N_\phi}
    u_m^i\, \chi_i(t)\, \phi_m(\spatial).
\]

We discretize the continuous problem \eqref{eq:parabolic:convection-diffusion} in space and time.
Let $a(\cdot,\cdot)$ be the bilinear form associated with the operator $-\Delta + c\cdot\nabla$, that is
\[
    a(u, v; \triple) = \int_\Omega k\, \nabla u \cdot \nabla v + (c \cdot \nabla u)\, v \dx.
\]

First, the forward Euler scheme on each time interval $I_i=[t_i,t_{i+1}]$ yields
\begin{equation}
    \label{eq:discrete-equation:forward-time}
    \adaptiprod{\frac{u(\freevar, t_{i+1})- u(\freevar, t_i)}{\Delta_i}}{v}
    + a(u(\freevar, t_i), v; \triple)
    = \iprod{\mu}{v} + \int_{\Gamma_1} g(t_{i}) \, v \dd \sigma \dd t,, \quad u(\freevar, t_0) = u_0,
\end{equation}
for all $ v \in \mathbb{V}_m \subset \{v_m \in W^{1,p}(\Omega) : v_m|_{\Gamma_1} = 0\} $.
Taking $u, v \in \mathbb{U}$, we can write this in terms of coefficients as
\[
    \sum_{m,r=1}^{N_\phi}
    \left(
    \frac{u_m^{i+1}- u_m^i}{\Delta_i} \iprod{\phi_m}{\phi_r} v_r^i
    + u_m^i a(\phi_m, \phi_r; \triple) v_r^i
    \right)
    = \sum_{r=1}^{N_\phi} \iprod{\mu}{\phi_r} v_r^i + \sum_{m,r=1}^{N_\phi} g_m^i \, \iprod{\phi_m}{\phi_r}_{\Gamma_1} \, v_r^i    \quad\text{for all}\quad
    \mathbf{v}^i \in \R^{N_\phi}
\]
with $u_{m,0} = [u_0]_m$ for all $m=1,\ldots,N_\phi$. That is,
\begin{equation}
    \label{eq:discrete-equation:forward-full}
    \mathbf{M} \mathbf{u}^{i+1}
    =
    \left(\mathbf{M} - \Delta_i \mathbf{K}(\triple)\right)\mathbf{u}^i
    + \Delta_i \mathbf{f}^i
    \quad\text{for all}\quad
    i=0,\dots,N_T-1,
\end{equation}
where
\[
    \mathbf{f}^i = \bigl(\iprod{\mu}{\phi_r} + \mathbf{M}|_{\Gamma_1} \, g^i \bigr)_r,
    \quad
    \mathbf{M} = \bigl(\iprod{\phi_m}{\phi_r}\bigr)_{m,r}
    \quad
    \mathbf{K}(\triple)=\bigl(a(\phi_m,\phi_r; \triple)\bigr)_{m,r}
    \quad\text{with}\quad
    m,r \in \{1,\ldots,N_\phi\}.
\]

\paragraph{Discretisation of the adjoint variable and equation}

The discrete adjoint system arises from the same space--time Galerkin discretization as the state equation and is therefore adjoint consistent.
The adjoint variable $w \in U_0\defeq \{ v \in U : v|_{\Sigma_1} = 0 \}$ is discretised in the same space $\mathbb{U}$ as the concentration.
The weak adjoint equation $e_u(\triple)^*w_\triple = \diffwrt{J}{u}(S_u(\triple), \triple)$.
recalling the adjoint equation \eqref{eq:adjoint}, write $\tilde{a}(\cdot,\cdot; \triple)$ for the bilinear form associated with the operator $-\Delta - \mathbf{c}\cdot\nabla$, that is
\[
    \tilde{a}(w, v; \triple) = \int_\Omega k\, \nabla w \cdot \nabla w - (c \cdot \nabla u)\, v \dd \xi.
\]
Similarly to \eqref{eq:discrete-equation:forward-time}, we then obtain the system
\[
    \adaptiprod{\frac{w(\freevar, t_{i+1})- w(\freevar, t_i)}{\Delta_i}}{v}
    + \tilde a(w(\freevar, t_i), v; \triple)
    = \diffwrt{J}{u(\freevar, t_i)}(S_u(\triple), \triple)\, v, \quad w(\freevar, t_{N_T}) = 0
    \quad\text{for all}\quad
    v \in  U_0.
\]
for all $i=1,\ldots,N_T$.
Taking $w, v \in \mathbb{U}$, similarly to \eqref{eq:discrete-equation:forward-full}, this becomes
\[
    \mathbf{M} \mathbf{w}^i
    =
    \left(\mathbf{M} + \Delta_i \mathbf{L}(\triple)\right)\mathbf{w}^{i+1}
    + \Delta_i \mathbf{M} \mathbf{b}^i
    \quad\text{for all}\quad
    i=0,\ldots,N_T-1,
\]
where the matrix and vector
\begin{equation}
    \label{eq:discrete-equation:mat-vec-lb}
    \mathbf{L}(\triple) = \bigl(\tilde{a}(\phi_m,\phi_r; \triple)\bigr)_{m,r}
    \quad\text{and}\quad
    \mathbf{b}^i(\triple)
    = \bigl(\diffwrt{J}{u^i_r}(S_u(\triple), \triple)\bigr)_r
    \quad\text{with}\quad
    m,r \in \{1,\ldots,N_\phi\}.
\end{equation}
We will refine the expression for $\mathbf{b}^i(\triple)$ in \cref{sec:implementation:laser}, after fixing $J$ and the observation operator $A$ within it.

\paragraph{Discretisation of the parameters}

We now consider the discretisation of the parameters $\triple=(\mu,k,c)$.
Practically, the algorithms of \cref{sec:optimisation} ensure that on each iteration $n$, the measure $\this\mu=\sum_{i=0}^{m_n} \beta_{n,i} \delta_{\xi_{n,i}}$ is a weighted sum of Dirac measures.
We do not enforce the source points $\xi_{n,i}$ to lie on any specific grid: aside from the discretisation of $u$, and of $\diffwrt{F}{\mu}(\triple)$, our algorithm is partially grid-free.

In our numerical experiments, the convection and diffusion field $k$ and $c$ are scalars, $c(x) \equiv (c_1, c_2) \in \R^2$ and $k(x) \equiv k_0 \in \R$.

\subsection{Laser observation operator}
\label{sec:implementation:laser}

The observation model for gas emission detection is based on path-integrated measurements obtained via Laser Diode Spectroscopy (LDS). Each laser beam provides a path-averaged signal proportional to the line integral of the gas concentration along its trajectory.
At each time step $t_i$, $i=0,\ldots,N_T-1$, the concentration field $u(\spatial,t)$ is observed through $N_\ell$ laser beams $\{ \gamma_\ell \}_{\ell=1}^{N_\ell}$ (see \cref{fig:lasers-and-mirrors}) reflected back to their origin from mirrors.
This leads to the linear observation model
\[
    A: L^2(\Omega_T) \to \R^{N_\ell \times N_T},
    \quad
    [Au]_{\ell,i} = \int_{\gamma_\ell} u(\xi, t_i) \dd s,
\]
where $\ell=1,\ldots,N_\ell$ runs over the laser beams, and $i=0,\ldots,N_T-1$ over the observation times $t_0, \ldots,t_{N_T-1} \in [0, T)$.
Thus the path-averaged LDS maesurements
\[
    b_t = A\hat u + \nu,
\]
for the ground truth concentration $\hat u \in L^2(\Omega_T)$, and (Gaussian) noise $\nu$.

For $u \in \mathbb{U}$ with the coefficient vectors $\mathbf{u}^1,\ldots,\mathbf{u}^{N_T} \in \R^{N_\phi}$ as in \cref{sec:discrete-equation}, we get
\[
    [Au]_{\ell,i}
    = \int_{\gamma_\ell} u(\spatial,t_i) \dd s
    =
    \sum_{m=1}^{N_\phi} u_m^i
    \int_{\gamma_\ell} \phi_m(\spatial)\dd s
    =
    \begin{pmatrix}
        \int_{\gamma_\ell} \phi_1(\spatial)\dd s, & \ldots, & \int_{\gamma_\ell} \phi_m(\spatial)\dd s
    \end{pmatrix}
    \mathbf{u}^i.
\]
For each observation time index $i=1,\ldots,N_T-1$, the ideal discretisation of $u \mapsto [Au]_{\freevar, i} \in \R^{N_\ell}$ is, therefore,
\[
    \obs \in \R^{N_\ell \times N_\phi}
    \quad\text{with}\quad
    \obs_{\ell,m} = \int_{\gamma_\ell} \phi_m(\spatial)\dd s.
\]

Taking $J(u, \triple)=\frac{1}{2}\norm{Au-z_d}_2^2$, we now have
\[
    J(u, \triple) = \sum_{i=0}^{N_T-1} \frac{1}{2}\norm{\mathbf{A}\mathbf{u}^i-z_d^i}_2^2
    \quad\text{when}\quad
    u \in \mathbb{U}.
\]
Thus, $\mathbf{b}^i(\triple)$ defined in \eqref{eq:discrete-equation:mat-vec-lb} can be written
\[
    \mathbf{b}^i(\triple) = (\mathbf{A}^\top(\mathbf{A}\mathbf{u}^i-z_d^i))_i, \quad i=0,\dots,N_T-1.
\]

\paragraph{Approximate ray tracing via cell-locator quadrature}

Due to limitations in the documented API (Application Programming Interface) of the Fenicsx PDE solver and modelling library, we use a simplified and numerically efficient approach, instead of full ray-tracing, to approximate the line integrals.
Specifically, $\obs$ is approximated via a quadrature-free method as follows:
\begin{enumerate}\item \textbf{Beam segmentation:}
          Partition each beam path $\gamma_\ell$ into $n_{\mathrm{seg}}$ equal-length segments with midpoints $\bar \xi_p \in \Omega$, ($p=0,\dots,n_{\mathrm{seg}}-1)$.

    \item \textbf{Cell location:}
          For each midpoint $\bar \xi_p$, identify the containing triangular element $T_p$.

    \item \textbf{Uniform quadrature:}
          Distribute each segment length $l_p = \int_{\gamma_\ell} \d s/n_{\mathrm{seg}}$ uniformly over the degrees-of-freedom (DOFs) $h \in \mathscr{D}_p$ the cell $T_p$, updating
          $
              \mathbf{A}_{\ell,h} \defeq \mathbf{A}_{\ell,h} + l_p/\abs{\mathscr{D}_p}.
          $
\end{enumerate}

This yields a sparse matrix $\obs$ whose $i$-th row approximates the beam integral via midpoint quadrature with uniform cell-DOF averaging.
Further details are available in our software implementation \cite{dangvalkonen2026implementation}.

\subsection{The optimisation algorithm}
\label{sec:implementation:algorithm}

The implementation of the optimisation algorithm is described in \cite{tuomov-pointsource,tuomov-unbalanced}.
Our present application shares most of the codebase, with the changes related to the implementation and treatment of the forward operator, which in \cite{tuomov-pointsource,tuomov-unbalanced} was a simple convolution.
In particular, $v= \diffwrt{F}{\mu}(x)$ now belongs to a P2 (second order polynomial) finite element subspace of $\Predual$. Therefore, we do not need to use a branch-and-bound algorithm to find---what were---grid-free minimisers of this function. We can, instead, solve the minima exactly by minimising the function on each element, and taking a best elementwise minimiser as the global minimiser.

\section{Numerical results}
\label{sec:numerical}

We now describe our numerical experiments on leak detection.
The details of the numerical implementation of the model \cref{eq:problem,eq:problem:specific,eq:parabolic:convection-diffusion}, which is available in \cite{dangvalkonen2026implementation}, are as in \cref{sec:implementation}.
We describe the specific experimental setup and algorithm parametrisation in \cref{sec:numerical:setup,sec:numerical:alg}, and then discuss the results in \cref{sec:numerical:res}.

\subsection{Experimental setup}
\label{sec:numerical:setup}

\begin{figure}
    \centering
    \begin{tikzpicture}[scale=5]
        \pgfmathsetmacro{\mirrorsperside}{10}
        \foreach \x in {0.1, 0.4} {
                \foreach \y in {0.1, 0.4} {
                        \node[draw,draw=red!10!white,fill=red,star,star points=10,star point ratio=4,scale=0.3] at (\x, \y) {};
                    }
            }

        \foreach \i in {1,...,\mirrorsperside} {
                \pgfmathsetmacro{\c}{0.5*\i/(\mirrorsperside+1)}
                \pgfmathsetmacro{\s}{\c-0.01}
                \pgfmathsetmacro{\e}{\c+0.01}

                \foreach \a in {0, 0.5} {
                        \draw[line width=1.5pt,color=SkyBlue] (\s, \a)--(\e, \a);
                        \draw[line width=1.5pt,color=SkyBlue] (\a, \s)--(\a, \e);

                        \foreach \x in {0.1, 0.4} {
                                \foreach \y in {0.1, 0.4} {
                                        \draw[line width=0.5pt,color=red,opacity=0.07] (\x, \y)--(\c, \a);
                                        \draw[line width=0.5pt,color=red,opacity=0.07] (\x, \y)--(\a, \c);
                                    }
                            }
                    }
            }

    \end{tikzpicture}
    \caption{Illustration of the laser and mirror placements. The thick blue lines on the boundary indicate the mirrors, the red stars the laser sources, and the dimmed thin red lines the beams.}
    \label{fig:lasers-and-mirrors}
\end{figure}

We divide the time interval $I=[0, 1]$ into 50 uniform segments.
The domain is $\Omega=[0, 0.5]^2$ with a uniform grid of $32 \times 32$ nodes.
The laser sources are at $\{(0.1, 0.1), (0.1, 0.4), (0.4, 0.1), (0.4, 0.4)\}$, and each edge of the domain has 10 uniformly spaced mirrors, as illustrated in \cref{fig:lasers-and-mirrors}.
Further details can be found in our numerical implementation \cite{dangvalkonen2026implementation}.
This gives $4 \cdot 10 \cdot 4 = 160$ data points for each time step, altogether $160 \cdot 50 = 8000$ data points, to be compared with altogether $32 \cdot 32 \cdot 50 = 51200$ nodal values for the diffusion field.

We perform two experiments with different atmospheric conditions and leak sources.
In experiment \#1, the true convection field is $\hat c \equiv 0.5(\cos(30^\circ), \sin(30^\circ))$, and the true diffusion field $\hat k \equiv 0.01$ with the true source measure
\[
    \hat\mu = 0.08\,\delta_{(0.1,0.3)} + 0.05\,\delta_{(0.4, 0.25)} + 0.06 \,\delta_{(0.25, 0.13)}
\]
In experiment \#2, the true convection field is $\hat c \equiv 0.1(\cos(120^\circ), \sin(120^\circ))$, and the true diffusion field $\hat k \equiv 0.02$ with the true leaks modelled by the measure
\[
    \hat\mu = 0.15\,\delta_{(0.2,0.3)} + 0.04\,\delta_{(0.4, 0.1)}.
\]
(The latter leak coincides with one of the laser sources).
The measurements $\tilde k$ and $\tilde c_i$ have, respectively, $2\%$ and $20\%$ Gaussian noise: the PDE is much more sensitive to the diffusion parameter $k_0$, and cannot tolerate higher levels of noise.
The laser line-of-sight measurements $b$ have 1\% Gaussian noise.

The boundary conditions are $u=g \defeq 0$ on $\Gamma_1 = \{0,1\} \times [0,1]$ and $\partial u/\partial n=0$ on $\Gamma_2=(0,1) \times \{0, 1\}$. The initial concentration is $u_0=0$.

For the regularisation paramater of the measure, by trial and error, we take $\alpha=1.5 \cdot 10^{-6}$, while we use a combination ofe box constraints and quadratic fitting with weighting discovered by trial-and-error in
\[
    R_{k,c}(k, c) = \delta_{[0.001, 1]}(k_0) +\frac{3}{2}(k_0-\tilde k)^2
    + \sum_{i=1,2}\left(
    \delta_{[-1, 1]}(c_i) + \frac{0.0005}{2}(c_i - \tilde c_i)^2
    \right),
\]
where we recall that we take the convection and diffusion fields $c \equiv (c_1, c_2) \in \R^2$ and $k \equiv k_0 \in \R$ to be constant throughout the domain.
Theeir respective measurements are $\tilde k$, $\tilde c_1$, and $\tilde c_2$.

\subsection{Algorithm parametrisation}
\label{sec:numerical:alg}

We take as the primal step length parameter $\tau=0.99L$, where $L=4$ is manually fine-tuned compared to the factor estimate given by \cref{cor:f-lipschitz-differentiable}
The theoretical parameter $L \approx 14.9$ gives an algorithm with comparable optimisation performance, but more superfluous sources (leaks) $x_{k,i}$ of very low release rate $\beta_{k,i} \approx 0$.

The sliding step length parameter is $\theta=100$, likewise based on manual fine-tuning.
We use a merging strategy to reduce the number of sources in the measure iterates $\mu^n = \sum_{i=1}^{m_n} \beta_{n,i}\delta_{\xi_{n,i}}$: every 10 iterations, we replace any two sources $i \ne j$ with $\norm{\xi_{n,j}-\xi_{n,i}} \le 0.1$ with $(\beta_{n,j}+\beta_{n,i})\delta_{\xi_{n,j}}$ if this does not increase the objective function value, within a decreasing tolerance. This helps to keep the number of sources controlled, and, therefore, avoid the per-iteration computational demands growing.

As initial iterates, we take measured values $c^0=\tilde c$ and $k^0=\tilde k$ of the convection and diffusion parameters. A priori, we assume there to be no leaks, taking as initial iterate of the leak-modelling measure $\mu^0 = 0$.

\subsection{Numerical results}
\label{sec:numerical:res}

\begin{figure}[t]
    \subcaptionbox{True concentration $\hat u$ and measure $\hat\mu$}{%
        \includegraphics[width=\linewidth]{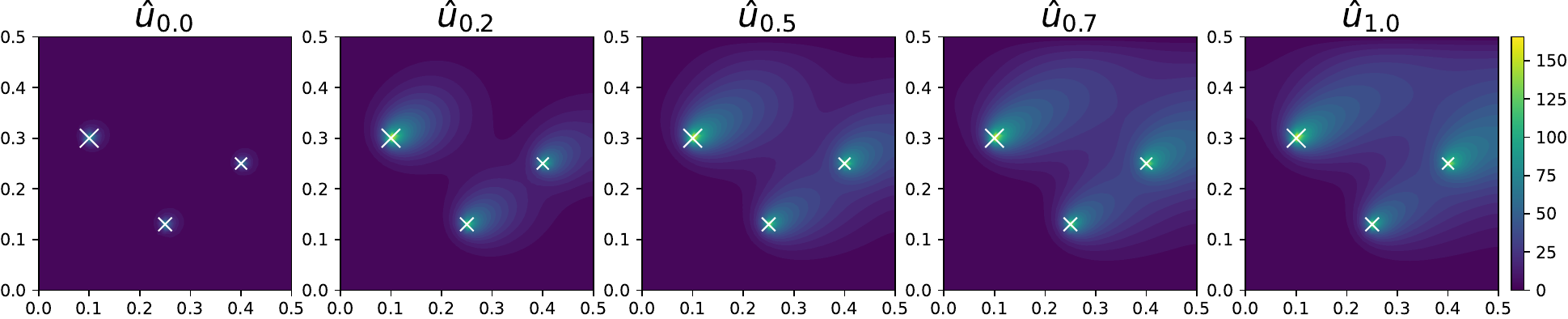}
    }
    \subcaptionbox{Sliding method reconstruction of concentration $u$ and measure $\mu$}{%
        \includegraphics[width=\linewidth]{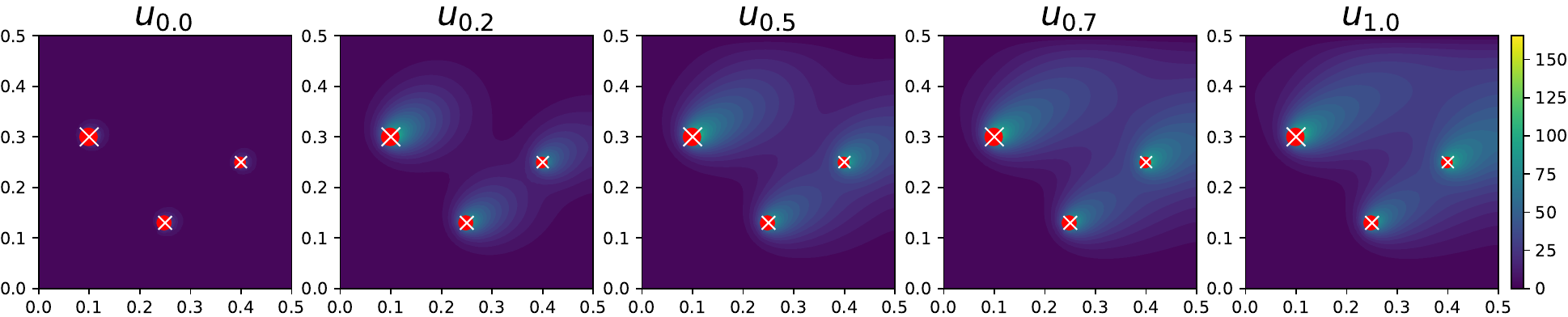}
    }
    \subcaptionbox{Basic method reconstruction of concentration $u$ and measure $\mu$}{%
        \includegraphics[width=\linewidth]{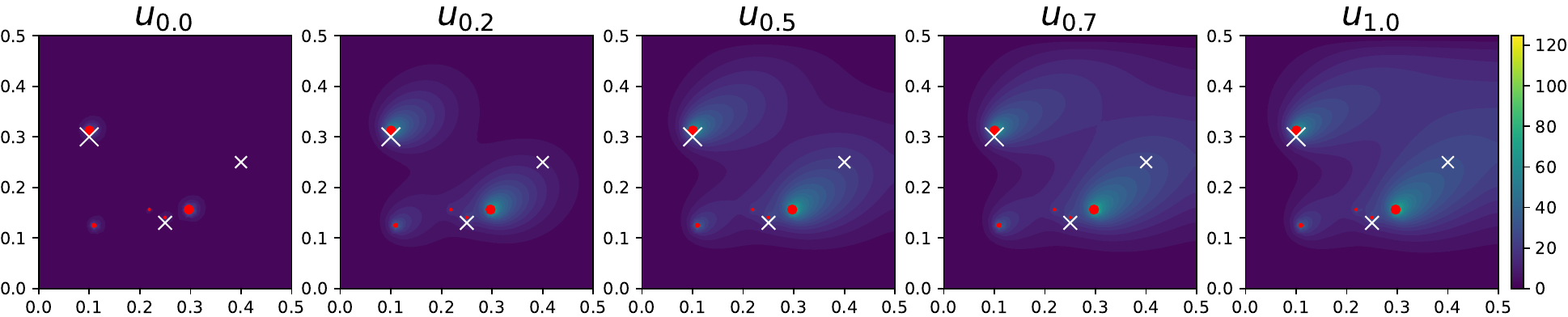}
    }
    \caption{%
        Reconstructions, experiment \#1.
        Time runs from left to right, with the reconstructions shown for $t=0.0,0.2,0.5,0.7,1.0$.
        The shading indicates the concentration field $u$.
        The red dots indicate the reconstructed measure, and the white crosses the ground true measure.
        The size of the dots corresponds to the escape rate.
    }
    \label{fig:recos1}
\end{figure}

\begin{figure}[t]
    \subcaptionbox{True concentration $\hat u$ and measure $\hat\mu$}{%
        \includegraphics[width=\linewidth]{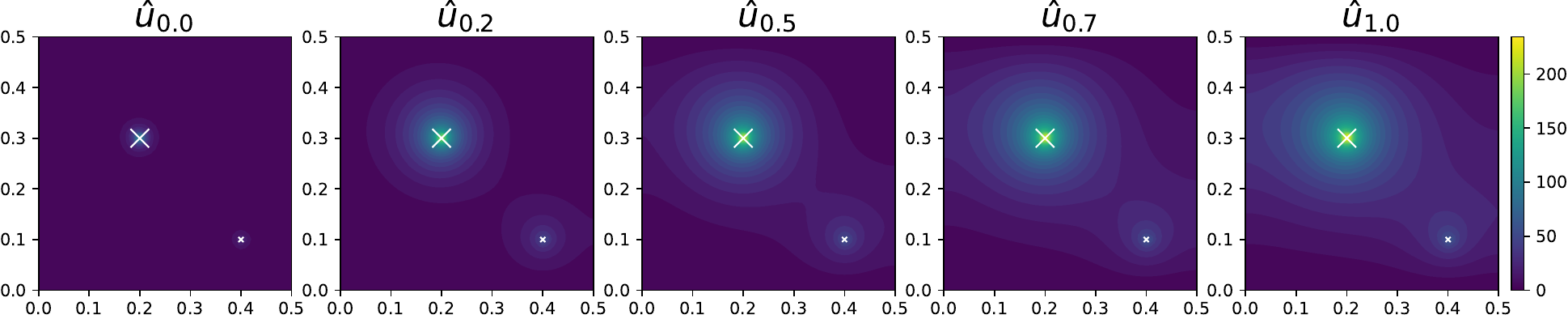}
    }
    \subcaptionbox{Sliding method reconstruction of concentration $u$ and measure $\mu$}{%
        \includegraphics[width=\linewidth]{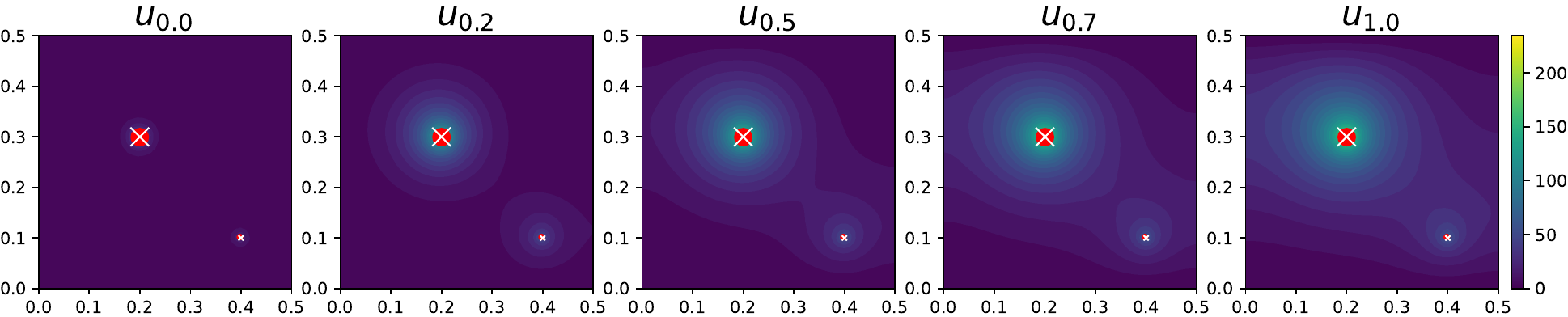}
    }
    \subcaptionbox{Basic method reconstruction of concentration $u$ and measure $\mu$}{%
        \includegraphics[width=\linewidth]{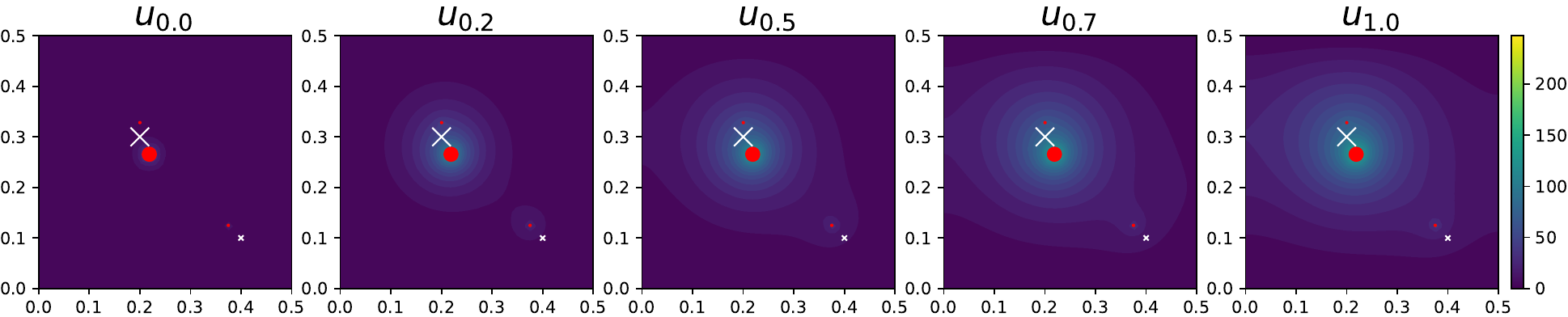}
    }
    \caption{%
        Reconstructions, experiment \#2.
        Time runs from left to right, with the reconstructions shown for $t=0.0,0.2,0.5,0.7,1.0$.
        The shading indicates the concentration field $u$.
        The red dots indicate the reconstructed measure, and the white crosses the ground true measure.
        The size of the dots corresponds to the escape rate.
    }
    \label{fig:recos2}
\end{figure}

\newlength{\figheight}
\setlength{\figheight}{0.35\textwidth}
\newlength{\twofigheight}
\setlength{\twofigheight}{0.6\figheight}
\newlength{\dataplotheight}
\setlength{\dataplotheight}{0.3\linewidth}
\newlength{\twodkernelplotheight}
\setlength{\twodkernelplotheight}{\figheight}

\pgfplotsset{
    compat = 1.18,
    scale only axis = false,
    scale mode = stretch to fill,
    every axis/.append style = {
            trim axis left,
            trim axis right,
        },
    plot coordinates/math parser = false,
    every axis label/.append style = {font = \scriptsize},
    every tick label/.append style = {font = \scriptsize},
    x tick label style = {
            /pgf/number format/fixed,
            /pgf/number format/set thousands separator={\,}
        },
    legend style = {
            inner sep = 0pt,
            outer xsep = 5pt,
            outer ysep = 0pt,
            legend cell align = left,
            align = left,
            draw = none,
            fill = none,
            font = \scriptsize,
        },
    width = \linewidth,
    height = \figheight,
    scaled x ticks = false,
    xminorticks = true,
    unbounded coords = jump,
    axis x line* = bottom,
    axis y line* = left,
    yminorticks = true,
    ylabel style = {
            at = {(yticklabel* cs:0.5)},
            anchor = center,
            yshift = 7ex,
        },
    y tick scale label style = {
            at = {(yticklabel* cs:1.0)},
            anchor = center,
            xshift = -3ex,
        },
    notfirst/.style = {skip coords between index={0}{1}},
    log x ticks with fixed point/.style={
            xticklabel={
                    \pgfkeys{/pgf/fpu=true}
                    \pgfmathparse{exp(\tick)}%
                    \pgfmathprintnumber[fixed relative, precision=3]{\pgfmathresult}
                    \pgfkeys{/pgf/fpu=false}
                },
        },
    log y ticks with fixed point/.style={
            yticklabel={
                    \pgfkeys{/pgf/fpu=true}
                    \pgfmathparse{exp(\tick)}%
                    \pgfmathprintnumber[fixed relative, precision=3]{\pgfmathresult}
                    \pgfkeys{/pgf/fpu=false}
                },
        },
}

\ExplSyntaxOn
\NewDocumentCommand{\addplotnamedtable}{O{}m}{%
\use:e { \exp_not:n { \addplot table~[#1] } { \exp_not:c { #2 } } }
}
\NewDocumentCommand{\pgfplotstablereadnamed}{mm}{%
    \use:e { \exp_not:n { \pgfplotstableread{#1} } { \exp_not:c { #2 } } }
}
\ExplSyntaxOff

\catcode`\_=12
\def\radonFBName{basic}\def\radonFBFilename{radon_fb}
\def\sFBradonName{sliding}\def\sFBradonFilename{radon_sliding_fb}
\catcode`\_=8

\newif\ifallalgs
\allalgstrue

\pgfkeys{/matrixdim/.initial = 16}

\pgfplotsset{
    solid mark/.style = {
            every mark/.append style = {solid},
        }
}

\pgfplotstableset{
    create on use/inner_iters_mean/.style={
            create col/expr={\thisrow{inner_iters} / \thisrow{this_iters}},
        }
}

\pgfplotscreateplotcyclelist{reconstructions}{
    {color = Set2-B, line width = 1pt, mark = *, mark size = 2pt},
    {color = Set2-F, line width = 1pt, mark = square*},
    {color = Set2-E, line width = 1pt, mark = o, mark size = 2.5pt, densely dotted, solid mark},
    {color = Set2-C, line width = 1pt, mark = triangle, mark size = 2.5pt},
    {color = Set2-D, line width = 1pt, mark = o, mark size = 3.5pt, densely dashdotted, solid mark},
    {color = Set2-H, line width = 1pt, mark = pentagon, mark size = 5pt, densely dashed, solid mark},
    {color = Set2-A, line width = 1pt, mark = square, mark size = 2pt, loosely dashed},
}

\def\ignorelegendentry#1{}

\pgfplotsset{
    reconstructionplot/.style = {
            reverse legend,
            cycle list name = reconstructions,
            ycomb,
            mark size = 3pt,
            legend columns = 2,
        },
    performanceplot/.style = {
            no markers,
            cycle list shift = 1,
            cycle list name = reconstructions,
        },
    valueplot/.style = {
        },
    ignore legend/.style={
            every axis legend/.code={\let\addlegendentry\ignorelegendentry}
        },
}

\newcommand{\findminmax}[5]{
    \def\tempa{#3}
    \def\tempb{true}
    \ifx\tempa\tempb
        \pgfplotstablegetelem{0}{#2}\of{#1}%
        \pgfmathparse{\pgfplotsretval}
        \global\let#5\pgfmathresult
        \pgfplotstablegetelem{0}{#2}\of{#1}%
        \pgfmathparse{\pgfplotsretval}
        \global\let#4\pgfmathresult
    \fi
    \pgfplotstablegetrowsof#1
    \pgfmathparse{\pgfplotsretval-1}
    \foreach \i in {0,...,\pgfmathresult}{
            \pgfplotstablegetelem{\i}{#2}\of{#1}%
            \pgfmathparse{max(#5, \pgfplotsretval)}
            \global\let#5\pgfmathresult
            \pgfplotstablegetelem{\i}{#2}\of{#1}%
            \pgfmathparse{min(#4, \pgfplotsretval)}
            \global\let#4\pgfmathresult
        }
}

\catcode`\_=12

\def\performanceplots#1{%
    \expandafter\pgfplotsinvokeforeach\alglist{
        \pgfplotstablereadnamed{\datapath\csname##1Filename\endcsname_log.txt}{##1Performance}
    }
    \expandafter\pgfplotsinvokeforeach\alglistAlt{
        \pgfplotstablereadnamed{\datapathAlt\csname##1Filename\endcsname_log.txt}{##1PerformanceAlt}
    }
    \begin{tikzpicture}[baseline]
        \begin{axis}[%
                xmode = normal,%
                ymode = log,%
                width = 0.5\linewidth,
                height = \figheight,
                ylabel = {Value},
                xlabel = {Iteration},
                performanceplot,
                valueplot,
                legend style = {
                        yshift = 5ex,
                    },
                legend pos = south east,
            ]

            \expandafter\pgfplotsinvokeforeach\alglist {
                \addplotnamedtable [x = iter, y = value]{##1Performance};
                \addlegendentry{\csname##1Name\endcsname~ / \principalname}
            }
            \expandafter\pgfplotsinvokeforeach\alglistAlt {
                \addplotnamedtable [x = iter, y = value]{##1PerformanceAlt};
                \addlegendentry{\csname##1Name\endcsname~ / \altname}
            }
        \end{axis}
    \end{tikzpicture}
    \begin{tikzpicture}[baseline]
        \begin{axis}[%
                width = 0.5\linewidth,
                height = \figheight,
                xmode = normal,%
                ymode = log,
                ylabel = {Value},
                xlabel = {CPU time (seconds)},
                performanceplot,
                valueplot,
                ignore legend
            ]

            \expandafter\pgfplotsinvokeforeach\alglist {
                \addplotnamedtable [x = cpu_time, notfirst, y = value]{##1Performance};
                \addlegendentry{\csname##1Name\endcsname~ / \principalname}
            }
            \expandafter\pgfplotsinvokeforeach\alglistAlt {
                \addplotnamedtable [x = cpu_time, notfirst, y = value]{##1PerformanceAlt};
                \addlegendentry{\csname##1Name\endcsname~ / \altname}
            }
        \end{axis}
    \end{tikzpicture}
}

\def\evolutionplot#1{%
    \begin{tikzpicture}[baseline]
        \begin{axis}[%
                width = 0.5\linewidth,
                height = \twofigheight,
                name = spike count,
                xmode = normal,%
                ymin = 0,
                ylabel = {Source count},
                xlabel = {Iteration},
                const plot,
                performanceplot,
                x tick label style = { draw = none },
            ]

            \expandafter\pgfplotsinvokeforeach\alglist {
                \addplotnamedtable [x = iter, y = n_spikes]{##1Performance};
            }
            \expandafter\pgfplotsinvokeforeach\alglistAlt {
                \addplotnamedtable [x = iter, y = n_spikes]{##1PerformanceAlt};
            }
        \end{axis}
    \end{tikzpicture}
    \begin{tikzpicture}[baseline]
        \begin{axis}[%
                width = 0.5\linewidth,
                height = \twofigheight,
                xmode = normal,%
                xmin = 1,
                ymin = {\ifallalgs 1\else 0\fi},
                ylabel = {Inner iters.},
                xlabel = {Iteration},
                ymode = {\ifallalgs log\else normal\fi},
                log y ticks with fixed point,
                const plot,
                performanceplot,
            ]

            \expandafter\pgfplotsinvokeforeach\alglist {
                \addplotnamedtable [x = iter, y = inner_iters_mean]{##1Performance};
            }
            \expandafter\pgfplotsinvokeforeach\alglistAlt {
                \addplotnamedtable [x = iter, y = inner_iters_mean]{##1PerformanceAlt};
            }
        \end{axis}
    \end{tikzpicture}
}

\catcode`\_=8

\newif\ifplots
\plotstrue

\begin{figure}
    \def\alglist{{radonFB, sFBradon}}
    \let\alglistAlt\alglist

    \def\datapath{aux_}
    \def\principalname{experiment \#1}
    \def\datapathAlt{aux2_}
    \def\altname{experiment \#2}
    \performanceplots{#1}%
    \\
    \evolutionplot{#1}%
    \caption{Algorithm performance. Top row: Function value in terms of iteration count (left) and CPU time (right). Bottom row: source count in $\mu^k$ (left), and inner weight optimisation algorithm iteration count (right) per outer iteration.}
    \label{fig:performance}
\end{figure}

The reconstructed and true concentration $u$ and measure $\mu$ for experiments \#1 and \#2 are in \cref{fig:recos1,fig:recos2}, respectively.
The performance of both optimisation  algorithms (basic and sliding) is graphed in \cref{fig:performance} for both experiments.
In both cases, the sliding method almost perfectly recovers the true sources, hence the true concentration, while the basic method appears to get stuck at a local stationary point.
Although very report the reconstruction after 20000 iterations to ensure good recovery of the sources, in terms of function value, both algorithms are very close to the final value of the objecitve function already after 5000 iterations.

Overall, we can say that the sliding method performs well at leak detection with simulataneous refinement of the scalar convection (wind) parameter (measured with high 20\% noise), and diffusion (affected e.g. by temperature) parameter (measured with low 2\% noise).
It appears that the sliding helps to avoid local stationary points in this nonconvex optimisation problem.

\appendix

\end{document}